\newtheorem{example}{Example}
\newtheorem{assumption}[theorem]{Assumption}
\crefname{hypothesis}{Hypothesis}{Hypotheses}
\newcommand{\tol}{\mathrm{tol}}    
\newcommand{\e}{\mathrm{e}}        
\newcommand{\x}{{\bm{x}}}          
\newcommand{\y}{{\bm{y}}}          
\newcommand{\Dc}{{\mathcal{D}}}    
\newcommand{\real}{\operatorname{Re}}
\title{Efficient and stable diffusion generated methods for ground state computation in Bose--Einstein condensates\thanks{
Y. Cai is partially supported by the National Natural Science Foundation of China (Grant No. 12171041 and 11771036).
	D. Wang is partially supported by National Natural Science Foundation of China (Grant No. 12422116), Guangdong Basic and Applied Basic Research Foundation (Grant No. 2023A1515012199), Shenzhen Science and Technology Innovation Program (Grant No. JCYJ20220530143803007, RCYX20221008092843046), Guangdong Provincial Key Laboratory of Mathematical Foundations for Artificial Intelligence (2023B1212010001), and Hetao Shenzhen-Hong Kong Science and Technology Innovation Cooperation Zone Project (No.HZQSWS-KCCYB-2024016). }}
\author{Jing Guo\thanks{School of Mathematics and Statistics, Guangdong University of Technology, Guangdong, Guangzhou 510006, China (\email{jingguomath@gmail.com}).}\and
Yongyong Cai\thanks{Laboratory of Mathematics and Complex Systems (Ministry of Education), School of Mathematical Sciences, Beijing Normal University, Beijing,
	100875, PR China (\email{yongyong.cai@bnu.edu.cn}).}\and
Dong Wang  	\thanks{Corresponding author. School of Science and Engineering, The Chinese University of Hong Kong, Shenzhen, Guangdong 518172, China; Shenzhen International Center for Industrial and Applied Mathematics, Shenzhen Research Institute of Big Data, Guangdong 518172, China (\email{wangdong@cuhk.edu.cn}).}
}
\begin{document}

\maketitle

\begin{abstract}
This paper investigates numerical methods for approximating the ground state of Bose--Einstein condensates (BECs) by introducing two relaxed formulations of the Gross--Pitaevskii energy functional. These formulations achieve first- and second-order accuracy with respect to the relaxation parameter \( \tau \), and are shown to converge to the original energy functional as \( \tau \to 0 \). A key feature of the relaxed functionals is their concavity, which ensures that local minima lie on the boundary of the concave hull. This property prevents energy increases during constraint normalization and enables the development of energy-dissipative algorithms. Numerical methods based on sequential linear programming are proposed, accompanied by rigorous analysis of their stability with respect to the relaxed energy. To enhance computational efficiency, an adaptive strategy is introduced, dynamically refining solutions obtained with larger relaxation parameters to achieve higher accuracy with smaller ones. Numerical experiments demonstrate the stability, convergence, and energy dissipation of the proposed methods, while showcasing the adaptive strategy's effectiveness in improving computational performance.
\end{abstract}

\begin{keywords}
Bose--Einstein condensates, relaxed energy functional,  ground state, energy stability
\end{keywords}

\begin{AMS}
65K10,  	
65N12,  	
65Z05, 
81-08  	
\end{AMS}

\section{Introduction}

\sloppy{The phenomenon of Bose--Einstein condensation (BEC) marks a pivotal achievement in modern physics. It occurs when a system of bosons, cooled to ultra-low temperatures, collapses into the lowest quantum state, forming a macroscopic quantum system. Initially theorized by Bose and Einstein in the 1920s~\cite{Bose1924, Einstein1925} and experimentally realized in 1995 using dilute atomic gases~\cite{Anderson1995, Davis1995}, BECs exhibit remarkable properties such as superfluidity and quantum coherence. These unique characteristics have paved the way for groundbreaking applications across various scientific disciplines, including quantum simulations~\cite{Bloch2008} and precision measurements~\cite{Leggett2001}.}

The mathematical foundation of Bose--Einstein condensates is encapsulated by the Gross--Pitaevskii energy functional~\cite{BaoCai, Gross1961, Pitaevskii1961}, a fundamental tool for analyzing the properties of the condensate. This functional, defined for the condensate wave function \(\phi:=\phi(\x)\) (\(\x\in\Dc=\mathbb{R}^d, d=1,2,3\)), accounts for key physical contributions, including kinetic, potential, and interaction energies, and is represented  as
\begin{equation} \label{eng_orig}
	E(\phi) = \int_{\Dc } \left( \frac{1}{2} |\nabla \phi|^2 + V(\x) |\phi|^2 + \frac{\beta}{2} |\phi|^4  \right) \, d\x,
\end{equation}
where  \( V(\x) \) denotes the external trapping potential, and \( \beta \) represents the interaction strength between particles. The wave function $\phi$  satisfies the normalization condition
\begin{equation}\label{cons}
	\int_{\Dc} |\phi(\x)|^2 \, d\x = 1,
\end{equation}
ensuring that \(\phi\) is a valid probability density for the particles in the condensate. The corresponding minimization problem is given by
\begin{equation} \label{prob_orig}
	\min_{\|\phi\|_2 = 1} E(\phi),
\end{equation}
and its solution leads to the Euler--Lagrange equation, commonly referred to as the Gross--Pitaevskii equation (GPE):
\begin{equation}\label{eig_prob}
	\mu\phi(\x) = -\frac{1}{2}\Delta \phi + \left(V(\x) + \beta|\phi|^2 \right)\phi,
\end{equation}
where \(\mu\), the eigenvalue associated with the equation, is known as the chemical potential. The chemical potential can be computed as
\begin{equation}\label{mu}
	\mu = E(\phi) + \frac{\beta}{2}\int_{\mathbb{R}^d}|\phi|^4\,d\x.
\end{equation}
In practical computation,  the above whole space problem is usually truncated onto a bounded domain.
Under the assumption that \(V(\x)\) is a confining potential, i.e. \(\lim\limits_{|\x|\to\infty}V(\x)=+\infty\), as demonstrated in \cite[Theorem 2.4]{BaoCai}, the ground state of \eqref{prob_orig} exhibits exponential decay as \(|\x| \to \infty\), implying that its values become negligible at large distances from the origin. Consequently, it is possible to define a suitable bounded domain \(\Dc\) (still denoted as \(\Dc\), an interval in 1D, a rectangle in 2D and a box in 3D) for the Gross--Pitaevskii energy functional \eqref{eng_orig} and impose periodic boundary conditions without altering the fundamental properties of the problem. Hereafter, \(\Dc\subset\mathbb{R}^d\) is a bounded domain, and periodic boundary conditions are imposed for \(\phi(\x)\). Extensions to homogeneous Dirichlet boundary conditions or Neumann conditions are straightforward.

This mathematical framework has become indispensable for the study of BECs, providing a robust foundation for investigating their ground state properties and dynamics. It bridges theoretical predictions with experimental observations and has driven significant advancements in quantum science. Consequently, the computation of the ground state of BECs has been the focus of extensive research, leading to the development of numerous numerical methods. These methods can be broadly categorized into three main approaches:
\begin{itemize}
	\item Time-dependent methods based on gradient flow systems derived from the Schrödinger equation.
	\item Optimization algorithms for minimizing the energy functional~\eqref{eng_orig} under the \( L^2 \)-norm constraint.
	\item Eigenvalue solvers for the time-independent GPE~\eqref{eig_prob}.
\end{itemize}

Among these, gradient flow methods are the most widely employed. These include the continuous normalized gradient flow~\cite{BaoDu04,BaoWang07,Wang14}, the discretized normalized gradient flow~\cite{BaoCai,BaoDu04,CaiLiu,BaoJak}, and the Riemannian gradient flow~\cite{YinHuaZha,YinHuaCai}. In particular, the second type of gradient flow system, which incorporates a projection step to enforce normalization, is preferred for its simplicity and efficiency. Temporal discretization of this system has led to widely used numerical schemes such as the forward Euler method, backward Euler method, and semi-implicit methods, as well as Lagrange multiplier-based approaches for normalization~\cite{LiuCai}.

Alternatively, the Riemannian gradient flow system takes advantage of the Riemannian geometry of the \( L^2 \)-norm constraint. By utilizing the Riemannian gradient as the descent direction, these methods ensure energy dissipation along the tangent space of the Riemannian manifold. The forward Euler method applied to the Riemannian gradient flow system has been successfully used to compute both ground and excited states of BECs~\cite{YinHuaZha,YinHuaCai}.

Optimization-based methods also play a significant role in minimizing the energy functional while preserving the \( L^2 \)-norm constraint. These methods are particularly effective for finding local minima corresponding to the ground state of the condensate. Techniques such as the preconditioned Riemannian conjugate gradient method~\cite{ShuTang} and the Riemannian Newton method~\cite{JiaAndWen,TianCai} have been proposed to ensure normalization throughout the iterative process. Lastly, eigenvalue solvers, such as the \( J \)-method~\cite{RobPatDan}, finite element methods~\cite{ChenGongHeZhou,ChenHeZh}, and mixed finite element techniques~\cite{GalHauk}, have been extensively employed to solve the time-independent GPE~\eqref{eig_prob}, where the eigenvalue corresponds to the system's chemical potential.

Despite these advancements, challenges remain in achieving efficient, accurate, and energy-dissipative solutions for the ground state of BECs. In this work, we address these challenges from an optimization perspective, introducing a novel relaxation-based framework for solving the Gross--Pitaevskii energy minimization problem. Specifically, we propose two types of relaxed energy functionals, demonstrating that as the relaxation parameter \( \tau \to 0 \), these functionals converge to the original energy functional~\eqref{eng_orig}. A key feature of the relaxed functionals is their concavity, which prevents energy increases during constraint normalization and enables the development of robust, energy-dissipative algorithms.

The two relaxed energy functionals differ in their accuracy: the first achieves first-order accuracy in the relaxation parameter \( \tau \), while the second achieves second-order accuracy. Building on the relaxation framework introduced in~\cite{Wan22} for Dirichlet partition problems, we adapt and extend this approach to BECs, enhancing its accuracy and applicability. The concavity of the relaxed functionals simplifies the optimization process and ensures energy dissipation, leading to efficient algorithms based on sequential linear programming.

To further enhance computational efficiency, we introduce an adaptive strategy for dynamically selecting the relaxation parameter \( \tau \). This strategy balances computational performance with solution quality, significantly improving the efficiency of the proposed methods. Numerical experiments validate the stability, convergence, and energy dissipation of the algorithms, confirming the effectiveness of the relaxed framework and the adaptive \( \tau \) strategy.

The remainder of this paper is organized as follows. Section~\ref{sec:relaxed_functionals} introduces the formulation of the relaxed energy functionals for non-rotating BECs and discusses their concavity properties. It also presents the proposed algorithms and the adaptive \( \tau \) strategy, emphasizing their energy-dissipative behavior. Section~\ref{sec:numerical_results} provides numerical results to demonstrate the performance of the proposed methods. In Section~\ref{sec:rot_BECs}, the relaxation technique is extended to rotating BECs, with numerical experiments validating its effectiveness. Finally, conclusions are drawn in Section~\ref{sec:conclusion}.
\section{Relaxation of Gross--Pitaevskii energy functional}\label{sec:relaxed_functionals}

This section introduces two distinct formulations of the relaxed Gross--Pitaevskii energy functional, which approximate the original energy functional with first-order and second-order accuracy, respectively. The discussion focuses on analyzing the properties of the relaxed energy functional and establishing the existence of local minima for the relaxed problem. Additionally, numerical algorithms based on sequential linear programming are presented, along with a detailed analysis of their energy stability. To begin, we establish the following assumptions:

\begin{assumption}\label{assum:BEC}
	The parameters involved in the minimization problem satisfy the following conditions:
	\begin{enumerate}
		\item \( \beta \) is a real, positive constant.
		\item The potential \( V \) satisfies \( V(\x) \in L^\infty(\Dc) \) and \( V(\x) \geq 0 \) for all \( \x \in \Dc \).
	\end{enumerate}
\end{assumption}
As demonstrated in the introduction part, the relaxed problems discussed in the following sections will  be considered on the bounded domain \(\Dc\) with periodic boundary conditions. It is clear that \(\Delta\) generates a semi-group on \(L^2(\Dc)\), denoted as \(e^{t\Delta}\) (\(t\ge0\)). The \(L^2\) inner product reads \[
\langle \phi_1, \phi_2 \rangle = \operatorname{Re} \left( \int_{\Dc} \phi_1(\x) \overline{\phi_2}(\x)\, d\x \right),
\]
where \(\overline{\phi}_2\) is the complex conjugate of \(\phi_2\). We now proceed to derive the relaxed energy functionals.

\subsection{First-order accurate relaxation}  

To construct a first-order accurate relaxation scheme, we begin by rewriting the expression \( e^{\tau\Delta}\phi \) in an alternative form. For \( \phi \in L^2(\mathcal{D}) \), the function \( e^{\tau\Delta}\phi \), which represents the solution at time \( \tau \) to the heat equation with analytical initial condition \( \phi \), can be expressed as  
\begin{equation*}  
	e^{\tau\Delta}\phi = \sum_{k = 0}^{\infty}\frac{(\tau\Delta)^k}{k!}\phi.
\end{equation*}  
Using this expansion, the inner product \( \langle e^{\tau\Delta}\phi, \phi \rangle \) can be written as  
\begin{equation}  
	\langle e^{\tau\Delta}\phi, \phi \rangle = \langle \phi, \phi \rangle + \langle \tau\Delta\phi, \phi \rangle + \left\langle \sum_{k = 2}^{\infty} \frac{(\tau\Delta)^k}{k!}\phi, \phi \right\rangle.  
\end{equation}  
For \( \phi \in H^1(\mathcal{D}) \) with periodic boundary conditions, this yields 
\begin{equation} \label{app_GinzbOrd1}  
	-\int_{\mathcal{D}} \frac{1}{2} (\Delta \phi) \bar{\phi} \, d\x = \frac{\|\phi\|_2^2}{2\tau} - \frac{1}{2\tau} \int_{\mathcal{D}} \left| e^{\frac{\tau}{2} \Delta}\phi \right|^2 d\x + O(\tau).  
\end{equation}  
Substituting this result into the energy functional \eqref{eng_orig}, we obtain the {\bf first-order accurate relaxed energy functional}
\begin{equation} \label{eng_concOrd1}  
	E^{1,\tau}(\phi) = \frac{1}{2\tau} + \int_{\mathcal{D}} \left( -\frac{1}{2\tau} \left| e^{\frac{\tau}{2} \Delta}\phi \right|^2 + V(\x) |\phi|^2 + \frac{\beta}{2} |\phi|^4  \right) \, d\x ,  
\end{equation}
subject to the constraint \( \|\phi\|_2 = 1 \).  Denote 
\begin{equation} \label{Space_SOrd1}
	\mathbb{S} := \left\{ \phi \mid \|\phi\|_2 = 1, \, E^{1,\tau}(\phi) < \infty \right\},
\end{equation}  
The minimization problem for the relaxed energy functional \eqref{eng_concOrd1} can then be formulated as
\begin{equation} \label{prob_relxOrd1}
	\min_{\phi \in \mathbb{S}} E^{1,\tau}(\phi).
\end{equation}

Applying Lemma~\ref{lem:exisAbsPhiOrd1} to \eqref{eng_concOrd1convx} in Appendix~\ref{sec:append} establishes the existence and uniqueness of a solution to the minimization problem: $\min_{\phi \in \mathbb{S}} E^{1,\tau}(|\phi|)$.

Combining this result with Lemma~\ref{lem:EconvOrd1absphi}, it follows that the problem $\min_{\phi \in \mathbb{S}} E^{1,\tau}(\phi)$
also admits a solution, and the positive minimizer is unique.  
The following theorem gives a formal statement of this result:
\begin{theorem}\label{thm:EconcOrd1_exis}[Existence of the local minimum]
	Let $\tau > 0$. Under Assumption~\ref{assum:BEC}, the minimization problem \eqref{prob_relxOrd1} has a unique minimizer up to a constant phase factor. That is, if $u$ and $v$ are both minimizers of \eqref{prob_relxOrd1}, then there exists a constant $\theta \in \mathbb{R}$ such that $v = e^{i\theta} u$.
\end{theorem}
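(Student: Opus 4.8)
The plan is to reduce the complex-valued minimization to a real, non-negative one through a diamagnetic-type inequality, to invoke the convexity-based existence and uniqueness already secured by the two lemmas, and finally to recover phase rigidity by analyzing the equality case of that inequality.

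First I would establish the pointwise bound $\left| e^{\frac{\tau}{2}\Delta}\phi \right|(\x) \le \bigl( e^{\frac{\tau}{2}\Delta}|\phi| \bigr)(\x)$ for a.e. $\x \in \Dc$. Since $\Delta$ with periodic boundary conditions generates a heat semigroup whose kernel $K_{\tau/2}(\x,\y)$ is strictly positive, one may write $\bigl(e^{\frac{\tau}{2}\Delta}\phi\bigr)(\x) = \int_{\Dc} K_{\tau/2}(\x,\y)\,\phi(\y)\,d\y$, and the triangle inequality for the integral of a complex-valued function against a positive weight gives the bound. Squaring, integrating, and noting that the remaining terms of $E^{1,\tau}$ depend on $\phi$ only through $|\phi|$, one obtains
\begin{equation*}
	E^{1,\tau}(\phi) \ge E^{1,\tau}(|\phi|), \qquad \| \, |\phi| \, \|_2 = \|\phi\|_2 = 1,
\end{equation*}
so that $|\phi| \in \mathbb{S}$ whenever $\phi \in \mathbb{S}$; this is the substance of Lemma~\ref{lem:EconvOrd1absphi}. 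Replacing any competitor $\phi$ by $|\phi|$ therefore never increases the energy, which yields $\inf_{\phi \in \mathbb{S}} E^{1,\tau}(\phi) = \inf_{\phi \in \mathbb{S}} E^{1,\tau}(|\phi|)$, and the latter infimum is attained at the unique positive minimizer $\rho$ of the convex reformulation \eqref{eng_concOrd1convx} by Lemma~\ref{lem:exisAbsPhiOrd1}. In particular $\rho > 0$ a.e. and $\rho \in \mathbb{S}$ is a minimizer of \eqref{prob_relxOrd1}, giving existence.

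Next I would prove phase rigidity. Let $u$ and $v$ be two minimizers of \eqref{prob_relxOrd1}. From $E^{1,\tau}(u) \ge E^{1,\tau}(|u|) \ge \inf_{\phi\in\mathbb{S}} E^{1,\tau}(\phi) = E^{1,\tau}(u)$ both inequalities collapse to equalities; hence $|u|$ minimizes the real problem, so $|u| = \rho$ by uniqueness, and likewise $|v| = \rho$. The equality $E^{1,\tau}(u) = E^{1,\tau}(|u|)$ forces $\int_{\Dc} \left| e^{\frac{\tau}{2}\Delta}u \right|^2 d\x = \int_{\Dc} \left| e^{\frac{\tau}{2}\Delta}|u| \right|^2 d\x$, which, combined with the pointwise inequality above, upgrades it to the a.e. identity $\left| e^{\frac{\tau}{2}\Delta}u \right|(\x) = \bigl( e^{\frac{\tau}{2}\Delta}|u| \bigr)(\x)$.

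Finally I would extract the constant phase. For each $\x$ in a full-measure set, equality in $\left| \int_{\Dc} K_{\tau/2}(\x,\y)\,u(\y)\,d\y \right| = \int_{\Dc} K_{\tau/2}(\x,\y)\,|u(\y)|\,d\y$ forces the integrand $K_{\tau/2}(\x,\y)\,u(\y)$ to share a common argument $\alpha(\x)$ in $\y$; since $K_{\tau/2} > 0$ this means $u(\y) = |u(\y)|\,e^{i\alpha(\x)}$ for a.e. $\y$. Because $|u| = \rho > 0$ a.e., comparing two choices of $\x$ shows $\alpha(\x)$ is in fact a single constant $\theta_u$, so $u = e^{i\theta_u}\rho$; similarly $v = e^{i\theta_v}\rho$, whence $v = e^{i(\theta_v - \theta_u)}u$. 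The main obstacle is precisely this last step: one must argue carefully that the phase delivered by the equality case is a genuine constant rather than an $\x$- or $\y$-dependent function, and this is exactly where the strict positivity of both the heat kernel $K_{\tau/2}$ and the minimizer $\rho$ is indispensable.
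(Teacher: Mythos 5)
Your proof is correct and rests on the same reduction the paper uses: the kernel-positivity inequality $E^{1,\tau}(\phi)\ge E^{1,\tau}(|\phi|)$ of Lemma~\ref{lem:EconvOrd1absphi} combined with the convexity-based existence and uniqueness for the real problem \eqref{eng_concOrd1convx} from Lemmas~\ref{lem:convex} and~\ref{lem:exisAbsPhiOrd1}. Where you go beyond the paper is the phase-rigidity step, and this is worth noting: the paper's own proof of Theorem~\ref{thm:EconcOrd1_exis} simply cites the appendix lemmas and the remark that ``the positive minimizer is unique,'' but it never carries out the equality-case analysis showing that an \emph{arbitrary} complex minimizer is a constant phase times the positive one. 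Your chain --- energy equality forces $\left|e^{\frac{\tau}{2}\Delta}u\right| = e^{\frac{\tau}{2}\Delta}|u|$ a.e.\ (the pointwise inequality plus equal integrals), equality in the triangle inequality against the strictly positive kernel forces $u(\y)=e^{i\alpha(\x)}|u(\y)|$ for a.e.\ $\y$, and comparing two values of $\x$ pins down a single constant phase --- is exactly the missing bridge from the lemmas to the stated ``unique up to $e^{i\theta}$'' conclusion. One small caveat: you assert $\rho>0$ a.e.\ without proof, and nothing in the paper establishes strict positivity of the minimizer; fortunately your argument does not need it. Since $\|u\|_2=1$, the set $\{|u|>0\}$ has positive measure, and the relation $u(\y)=e^{i\alpha(\x)}|u(\y)|$ on that set already forces $e^{i\alpha(\x_1)}=e^{i\alpha(\x_2)}$ for a.e.\ $\x_1,\x_2$, which yields the constant phase (with $u=e^{i\theta_u}|u|$ holding trivially where $u=0$). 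With that overclaim removed --- or with strict positivity proved separately, e.g.\ from the Euler--Lagrange equation \eqref{EL_eq} and the strict positivity of the heat kernel --- your proof is complete and in fact more detailed than the paper's.
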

\begin{proof}
See the detailed proof in Appendix~\ref{sec:append}.
\end{proof}

\subsection{Algorithm for the first-order accurate relaxed problem}\label{sec:algOrd1}
To construct an algorithm for solving the first-order relaxed problem, we begin by establishing convergence properties. Applying formula~\eqref{app_GinzbOrd1} yields the following  result:
\begin{lemma}\label{lem:conv_EOrd1}
	For any \(\phi \in H^1(\Dc)\), it holds that
	\begin{equation}
		\lim_{\tau \to 0} E^{1,\tau}(\phi) = E(\phi).
	\end{equation}
\end{lemma}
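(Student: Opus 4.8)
The plan is to exploit the fact that the potential and interaction contributions to $E^{1,\tau}$ and to $E$ are literally identical, so that they cancel and the whole problem collapses onto the kinetic term. Subtracting the two functionals gives
\begin{equation*}
E^{1,\tau}(\phi) - E(\phi) = \frac{1}{2\tau} - \frac{1}{2\tau}\int_{\Dc}\left|e^{\frac{\tau}{2}\Delta}\phi\right|^2 d\x - \frac12\int_{\Dc}|\nabla\phi|^2\,d\x,
\end{equation*}
because $\int_{\Dc} V|\phi|^2\,d\x$ and $\tfrac{\beta}{2}\int_{\Dc}|\phi|^4\,d\x$ occur unchanged in both. On the constraint set $\|\phi\|_2=1$ the explicit constant $\tfrac{1}{2\tau}$ coincides with $\tfrac{\|\phi\|_2^2}{2\tau}$, so it remains to prove that the relaxed kinetic quantity $\tfrac{1}{2\tau}\bigl(\|\phi\|_2^2-\|e^{\frac{\tau}{2}\Delta}\phi\|_2^2\bigr)$ tends to $\tfrac12\|\nabla\phi\|_2^2$ as $\tau\to0$. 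This is exactly the content of the expansion \eqref{app_GinzbOrd1}; the task is therefore to upgrade its formal $O(\tau)$ remainder into a genuine limit valid for every $\phi\in H^1(\Dc)$.

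I would carry this out spectrally. Under periodic boundary conditions $-\Delta$ admits a complete orthonormal eigenbasis $\{\psi_k\}$ with eigenvalues $\lambda_k\ge0$; writing $\phi=\sum_k c_k\psi_k$, the semigroup acts by $e^{\frac{\tau}{2}\Delta}\phi=\sum_k e^{-\frac{\tau}{2}\lambda_k}c_k\psi_k$, and Parseval's identity gives $\|\phi\|_2^2=\sum_k|c_k|^2$, $\|e^{\frac{\tau}{2}\Delta}\phi\|_2^2=\sum_k e^{-\tau\lambda_k}|c_k|^2$, and $\|\nabla\phi\|_2^2=\sum_k\lambda_k|c_k|^2$ (the last being finite precisely because $\phi\in H^1(\Dc)$). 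Hence
\begin{equation*}
\frac{1}{2\tau}\bigl(\|\phi\|_2^2-\|e^{\frac{\tau}{2}\Delta}\phi\|_2^2\bigr)=\sum_k\frac{1-e^{-\tau\lambda_k}}{2\tau}\,|c_k|^2,
\end{equation*}
and each summand converges pointwise in $k$ to $\tfrac12\lambda_k|c_k|^2$ as $\tau\to0$.

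The one delicate point, and the main (mild) obstacle, is justifying the interchange of the limit with the infinite sum: for $\phi$ only in $H^1$ the term $\Delta\phi$ need not belong to $L^2$, so the remainder in \eqref{app_GinzbOrd1} is a priori only formal. I would remove this difficulty with the elementary inequality $1-e^{-s}\le s$ for $s\ge0$, which yields the uniform bound $\tfrac{1-e^{-\tau\lambda_k}}{2\tau}|c_k|^2\le\tfrac12\lambda_k|c_k|^2$. The majorant $\tfrac12\lambda_k|c_k|^2$ is summable exactly because $\phi\in H^1(\Dc)$, so dominated convergence for series applies and gives the limit $\sum_k\tfrac12\lambda_k|c_k|^2=\tfrac12\|\nabla\phi\|_2^2$. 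Combined with the cancellation of the potential and interaction terms noted above, this shows $E^{1,\tau}(\phi)-E(\phi)\to0$, which is the claim. No further difficulty is anticipated: the argument is entirely spectral, and the only subtlety is the uniform domination that legitimizes the exchange of limit and summation for $H^1$ data.
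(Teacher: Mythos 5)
Your proof is correct, and it is in fact more complete than what the paper offers: the paper gives no real proof of this lemma, but simply asserts that it follows from the expansion \eqref{app_GinzbOrd1}, whose $O(\tau)$ remainder comes from the tail $\langle \sum_{k\ge 2}\frac{(\tau\Delta)^k}{k!}\phi,\phi\rangle$ of the semigroup series and is only justified when $\Delta\phi\in L^2(\Dc)$, i.e.\ for $\phi\in H^2(\Dc)$. Your spectral argument --- writing $\frac{1}{2\tau}\bigl(\|\phi\|_2^2-\|e^{\frac{\tau}{2}\Delta}\phi\|_2^2\bigr)=\sum_k\frac{1-e^{-\tau\lambda_k}}{2\tau}|c_k|^2$ and passing to the limit term by term using the domination $1-e^{-s}\le s$ --- is exactly what is needed to obtain the bare limit (with no rate) under the weaker regularity $\phi\in H^1(\Dc)$ that the lemma actually asserts; note that monotone convergence would also do, since $(1-e^{-s})/s$ is decreasing in $s$, so the summands increase as $\tau\downarrow 0$. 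So what the two approaches buy is clear: the paper's formal Taylor expansion yields the stronger quantitative statement $E^{1,\tau}(\phi)=E(\phi)+O(\tau)$ but only for $H^2$ data, while your dominated-convergence argument sacrifices the rate to cover all of $H^1$, which is the correct scope for the lemma as stated. One caveat that you handled silently but should make explicit: the difference $E^{1,\tau}(\phi)-E(\phi)$ contains the fixed constant $\frac{1}{2\tau}$ from \eqref{eng_concOrd1} rather than $\frac{\|\phi\|_2^2}{2\tau}$, so for unnormalized $\phi$ the extra term $\frac{1-\|\phi\|_2^2}{2\tau}$ diverges and the claimed limit is false; your restriction to the constraint set $\|\phi\|_2=1$ is therefore essential, not cosmetic, and it matches the paper's implicit setting, in which $E^{1,\tau}$ is only ever evaluated on the sphere $\mathbb{S}$. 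It would strengthen your write-up to state the normalization as a hypothesis rather than invoking it mid-proof.
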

Lemma~\ref{lem:conv_EOrd1} ensures that, as $\tau \to 0$, the minimizer of the relaxed problem \eqref{prob_relxOrd1} converges to the minimizer of the original problem \eqref{prob_orig}. Next, we show that the local minimizer of \eqref{prob_relxOrd1} is uniformly bounded.
\begin{lemma}\label{lem:phiLinf}
	Under Assumption~\ref{assum:BEC}, the minimizer $\phi$ of the problem \eqref{prob_relxOrd1} satisfies $\phi \in L^\infty(\mathcal{D})$, and $\|\phi\|_\infty\leq M$, where $M=\sqrt{\frac{\beta+2\int_{\Dc}V(\x)d\x}{\beta|\Dc|}}$ ($|\Dc|$ the Lebesgue measure of $\Dc$). 
\end{lemma}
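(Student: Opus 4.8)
The plan is to combine the Euler--Lagrange equation of \eqref{prob_relxOrd1} with a maximum-principle property of the heat semigroup, and to control the resulting ``relaxed chemical potential'' by comparing the energy of the minimizer with that of the constant competitor $\phi_c := |\Dc|^{-1/2}$. By Theorem~\ref{thm:EconcOrd1_exis} it suffices to treat the real, nonnegative minimizer $\phi \ge 0$, since $\|\phi\|_\infty$ is invariant under a constant phase factor; recall that $\phi$ lies in $\mathbb{S}$, hence in $L^2(\Dc)\cap L^4(\Dc)$.

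First I would record two elementary facts about $e^{\tau\Delta}$ on the periodic domain $\Dc$: it is self-adjoint with spectrum contained in $(0,1]$, so $\langle e^{\tau\Delta}\phi,\phi\rangle \le \|\phi\|_2^2 = 1$; and it is convolution with a probability kernel, so $0 \le e^{\tau\Delta}\phi(\x) \le \|\phi\|_\infty$ pointwise while $e^{\tau\Delta}$ maps $L^2(\Dc)$ into $L^\infty(\Dc)$ for $\tau>0$. The spectral fact shows the kinetic part $\tfrac1{2\tau}\bigl(1 - \langle e^{\tau\Delta}\phi,\phi\rangle\bigr)$ is nonnegative; together with $V\ge 0$ this yields $\tfrac{\beta}{2}\|\phi\|_4^4 \le E^{1,\tau}(\phi)$. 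Evaluating \eqref{eng_concOrd1} at $\phi_c$ (for which $e^{\frac{\tau}{2}\Delta}\phi_c = \phi_c$, so the kinetic part vanishes) gives $E^{1,\tau}(\phi_c) = \tfrac{1}{|\Dc|}\int_\Dc V\,d\x + \tfrac{\beta}{2|\Dc|} = \tfrac{\beta}{2}M^2$. Minimality then forces $E^{1,\tau}(\phi)\le \tfrac{\beta}{2}M^2$, and hence the $L^4$ bound $\|\phi\|_4^4\le M^2$.

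Next I would derive the Euler--Lagrange equation $-\tfrac1\tau e^{\tau\Delta}\phi + 2V\phi + 2\beta\phi^3 = 2\mu\phi$ (which holds a.e., all terms lying in $L^{4/3}+L^2$), with Lagrange multiplier $\mu$, and introduce the shifted multiplier $\nu := \mu + \tfrac1{2\tau}$. Testing this equation against $\phi$ and substituting the expression for the kinetic part gives the clean identity $\nu = E^{1,\tau}(\phi) + \tfrac{\beta}{2}\|\phi\|_4^4$, so that $\nu \le \tfrac{\beta}{2}M^2 + \tfrac{\beta}{2}M^2 = \beta M^2$. To convert this into a pointwise bound I would first establish $\phi\in L^\infty$: since $e^{\tau\Delta}\phi\in L^\infty$ and $V\ge 0$, the a.e.\ inequality $2\beta\phi^3 \le \tfrac1\tau\|e^{\tau\Delta}\phi\|_\infty + 2\mu\phi$ forces $\phi$ to be essentially bounded, because a cubic relation with bounded right-hand side admits no unbounded nonnegative solution. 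With $S:=\|\phi\|_\infty<\infty$ in hand, I then use the sharper bound $e^{\tau\Delta}\phi\le S$ in the a.e.\ equation to obtain $2\beta\phi^3 \le \tfrac{S}{\tau} + 2\mu\phi - 2V\phi \le \tfrac{S}{\tau} + 2\mu\phi$; restricting to the positive-measure super-level sets $\{\phi > S-\epsilon\}$ and letting $\epsilon\to 0$ yields $2\beta S^2 \le \tfrac1\tau + 2\mu = 2\nu$. Combining with the previous bound, $\beta\|\phi\|_\infty^2 \le \nu \le \beta M^2$, i.e.\ $\|\phi\|_\infty\le M$.

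The main obstacle is the rigor of this final maximum-principle step. Because $V$ is only assumed in $L^\infty(\Dc)$ under Assumption~\ref{assum:BEC}, $\phi$ need not be continuous and there is no genuine interior maximum point, so the averaging bound $e^{\tau\Delta}\phi\le\|\phi\|_\infty$ must be exploited at the level of essential suprema over super-level sets rather than at a single point; moreover the self-improving estimate can be closed only after finiteness of $\|\phi\|_\infty$ has been secured, which is why the crude boundedness argument must precede the sharp one. The crucial cancellation that keeps the bound $\tau$-uniform is that $\mu$ behaves like $-\tfrac1{2\tau}+O(1)$, so that $\nu=\mu+\tfrac1{2\tau}$ is bounded independently of $\tau$; by contrast, the energy comparison and the identity $\nu=E^{1,\tau}(\phi)+\tfrac{\beta}{2}\|\phi\|_4^4$ are routine once the two semigroup facts are in place.
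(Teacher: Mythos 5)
Your proposal is correct and follows essentially the same route as the paper: both derive the Euler--Lagrange equation, use the multiplier identity $\lambda+\tfrac{1}{2\tau}=E^{1,\tau}(\phi)+\tfrac{\beta}{2}\|\phi\|_4^4$ together with $\tfrac{\beta}{2}\|\phi\|_4^4\le E^{1,\tau}(\phi)$, compare against the constant competitor $|\Dc|^{-1/2}$, and conclude $\beta\|\phi\|_\infty^2\le \lambda+\tfrac{1}{2\tau}\le\beta M^2$. Your two-step treatment of the $L^\infty$ bound (first crude finiteness of $\|\phi\|_\infty$, then the super-level-set argument exploiting $e^{\tau\Delta}\phi\le\|\phi\|_\infty$) is in fact a more careful rendering of the paper's step, which loosely asserts the equality $\bigl\Vert(V+\beta|\phi|^2)\phi\bigr\Vert_\infty=\Vert V+\beta|\phi|^2\Vert_\infty\Vert\phi\Vert_\infty$ (only an inequality holds in general, though the cubic structure makes the needed direction recoverable exactly as you do).
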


\begin{proof}  
	The minimizer $\phi$ of the problem \eqref{prob_relxOrd1} satisfies the Euler-Lagrange equation: 
	\begin{equation}\label{EL_eq}  
		\lambda(\phi) \phi= -\frac{1}{2\tau} e^{\tau \Delta} \phi + \big(V(\x) + \beta |\phi|^2 \big) \phi,  
	\end{equation}  
	where the Lagrange multiplier $\lambda(\phi)$ is given by  
	\begin{equation}\label{EL_LagMP}  
		\lambda(\phi) = \int_{\mathcal{D}} \left(-\frac{1}{2\tau} \big| e^{\frac{\tau}{2} \Delta} \phi \big|^2 + V(\x) |\phi|^2 + \beta |\phi|^4 \right) d\x < \infty.  
	\end{equation}  
	Rearranging \eqref{EL_eq}, we obtain 
	\[ \lambda(\phi) \phi+   \frac{1}{2\tau}e^{\tau \Delta} \phi = \big(V(\x) + \beta |\phi|^2 \big) \phi.  \]
	Taking the $L^\infty$ norm on both sides yields  
	$$  
	\left\| \lambda(\phi)\phi + \frac{1}{2\tau} e^{\tau \Delta} \phi \right\|_\infty = \|V(\x) + \beta |\phi|^2 \|_\infty \|\phi\|_\infty.  
	$$  
	Using $
	\| e^{\tau \Delta} \phi \|_\infty \leq \|\phi\|_\infty,  
	$
	it follows that  
	$$  
	\left| \frac{1}{2\tau} + \lambda(\phi) \right| \|\phi\|_\infty \geq \|V(\x) + \beta |\phi|^2 \|_\infty \|\phi\|_\infty.  
	$$  
	Moreover,
\[
\frac{1}{2\tau} + \lambda(\phi) = E^{1,\tau}(\phi) + \int_\Dc \frac{\beta}{2} |\phi|^4 \, d\x>E^{1,\tau}(\phi)  > 0,
\]
where the positivity of \( E^{1,\tau}(\phi) \) follows from Lemma~\ref{lem:convex}.  
	From the definition of $\phi \in \mathbb{S}$ in \eqref{Space_SOrd1}, we know that $\|\phi\|_\infty > 0$, and hence we have
	\[ \frac{1}{2\tau} + \lambda(\phi)  - \|V(\x) + \beta |\phi|^2 \|_\infty \geq 0. \]
	Combining this with $V(\x) \geq 0$ and $\beta > 0$ (as stated in Assumption~\ref{assum:BEC}), we deduce  
	$$  
	\frac{1}{2\tau} + \lambda(\phi) - \beta \| |\phi|^2 \|_\infty \geq 0,  
	$$  
	which implies  
	$$  
	\|\phi \|_\infty \leq \sqrt{\frac{ \frac{1}{2\tau} + \lambda(\phi)}{\beta}}.  
	$$  
	Noticing that $\lambda(\phi)+\frac{1}{2\tau}=E^{1,\tau}(\phi)+\frac{\beta}{2}\int_{\Dc}|\phi|^4d\x\leq 2E^{1,\tau}(\phi)$,
	testing $\tilde{\phi}=1/\sqrt{|\Dc|}$, we find $E^{1,\tau}(\phi)\leq E^{1,\tau}(\tilde{\phi})=\frac{1}{|\Dc|}\left(\frac{\beta}{2}+\int_{\Dc}V(\x)d\x\right)$ and the conclusion follows.
\end{proof}

Lemma~\ref{lem:phiLinf} establishes that all minimizers $\phi$ of problem~\eqref{prob_relxOrd1} satisfy the uniform bound $\|\phi(\x)\|_\infty \leq M$ for some positive constant $M$.  
This result allows us to modify the quartic nonlinearity term $|\phi|^4$ in \eqref{eng_concOrd1} by introducing the truncated nonlinear term
\begin{equation}\label{trunc_nonlin}
	\widetilde{F}(\phi) = 
	\begin{cases}
		6M^2|\phi|^2-8M^3\phi+3M^4, & \phi > M, \\[.5em]
		|\phi|^4, & |\phi| \leq M, \\[.5em]
		6M^2|\phi|^2+8M^3\phi+3M^4, & \phi<- M.
	\end{cases}
\end{equation}
The corresponding truncated energy functional is defined as
\begin{equation}\label{eng_truncOrd1}
	\widetilde{E}^{1,\tau}(\phi) = \frac{1}{2\tau} + \int_{\mathcal{D}} \left(
	-\frac{1}{2\tau} \left| e^{\frac{\tau}{2} \Delta}\phi \right|^2 + V(\x) |\phi|^2 + \frac{\beta}{2} \widetilde{F}(\phi) - \kappa |\phi|^2 \right)\,
	d\x + \kappa,
\end{equation}
where $\kappa$ is a positive constant, which will be used in the construction of the algorithm. Note that the choice of the truncated nonlinear term \eqref{trunc_nonlin} ensures that the Fr\'echet differentiability of the energy functional remains unchanged. Moreover, the solutions to the corresponding Euler--Lagrange equation include the critical points of the problem \eqref{prob_relxOrd1}.

\begin{lemma}\label{lem:min_equivalence}
	For $\tau > 0$, under Assumption~\ref{assum:BEC}, the following equality holds:
	\begin{equation}\label{equi_TruncOptProb}
		\min_{\lVert \phi \rVert_2 = 1} \widetilde{E}^{1,\tau}(\phi) = \min_{\lVert \phi \rVert_2 = 1} E^{1,\tau}(\phi),
	\end{equation}
	where $E^{1,\tau}(\phi)$ and $\widetilde{E}^{1,\tau}(\phi)$ are defined in \eqref{eng_concOrd1} and \eqref{eng_truncOrd1}, respectively.
\end{lemma}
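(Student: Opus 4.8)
The plan is to prove the two inequalities $\min_{\|\phi\|_2=1}\widetilde{E}^{1,\tau}(\phi)\le \min_{\|\phi\|_2=1} E^{1,\tau}(\phi)$ and the reverse separately, exploiting that the two functionals coincide on the $L^\infty$-ball of radius $M$ furnished by Lemma~\ref{lem:phiLinf}. First I would record the exact relation between the two integrands on the constraint set. On $\{\|\phi\|_2=1\}$ the terms $-\kappa|\phi|^2$ and $+\kappa$ in \eqref{eng_truncOrd1} cancel, because $\int_\Dc\kappa|\phi|^2\,d\x=\kappa$. Hence, for every admissible $\phi$,
\begin{equation*}
\widetilde{E}^{1,\tau}(\phi)=E^{1,\tau}(\phi)-\frac{\beta}{2}\int_\Dc\bigl(|\phi|^4-\widetilde{F}(\phi)\bigr)\,d\x .
\end{equation*}
A short computation gives the factorization $|\phi|^4-\widetilde{F}(\phi)=(|\phi|-M)^3(|\phi|+3M)$ whenever $|\phi|\ge M$, and $|\phi|^4-\widetilde{F}(\phi)=0$ whenever $|\phi|\le M$. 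Thus the integrand is nonnegative and vanishes exactly on $\{|\phi|\le M\}$, so $\widetilde{E}^{1,\tau}(\phi)\le E^{1,\tau}(\phi)$ on the sphere, with equality if and only if $\|\phi\|_\infty\le M$.

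The inequality $\min\widetilde{E}^{1,\tau}\le\min E^{1,\tau}$ is then immediate. Let $\phi_0$ be a minimizer of $E^{1,\tau}$, which exists by Theorem~\ref{thm:EconcOrd1_exis}. By Lemma~\ref{lem:phiLinf} we have $\|\phi_0\|_\infty\le M$, so the equality case of the relation above applies and $\widetilde{E}^{1,\tau}(\phi_0)=E^{1,\tau}(\phi_0)=\min E^{1,\tau}$; consequently $\min\widetilde{E}^{1,\tau}\le\widetilde{E}^{1,\tau}(\phi_0)=\min E^{1,\tau}$.

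For the reverse inequality I would take a minimizer $\psi$ of $\widetilde{E}^{1,\tau}$ (whose existence follows from the direct method, since the truncated quartic grows only quadratically and $\widetilde{E}^{1,\tau}$ is bounded below; we may take the positive representative) and aim to show that $\psi$ also satisfies $\|\psi\|_\infty\le M$. Once this is known, the equality case gives $\widetilde{E}^{1,\tau}(\psi)=E^{1,\tau}(\psi)\ge\min E^{1,\tau}$, and hence $\min\widetilde{E}^{1,\tau}=\widetilde{E}^{1,\tau}(\psi)\ge\min E^{1,\tau}$, completing the proof. To bound $\psi$ I would derive its Euler--Lagrange equation, which is structurally identical to \eqref{EL_eq} but with $\beta|\psi|^2\psi$ replaced by $\tfrac{\beta}{2}\widetilde{F}'(\psi)$ together with an extra linear term coming from $\kappa$, then evaluate it at a point $\x_0$ where $|\psi|$ attains its maximum and invoke the contraction property $\|e^{\tau\Delta}\psi\|_\infty\le\|\psi\|_\infty$ and $V\ge 0$, mirroring the argument in Lemma~\ref{lem:phiLinf}.

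The main obstacle is precisely this $L^\infty$ bound for the truncated minimizer. In Lemma~\ref{lem:phiLinf} the bound was produced by the \emph{superlinear} term $\beta|\phi|^2\phi$, which becomes large at the maximum of $|\phi|$ and must be balanced; after truncation $\widetilde{F}'(\psi)$ grows only \emph{linearly} for $|\psi|>M$, so the maximum-principle estimate no longer forces the bound by growth alone. Closing it at the level $M$ therefore requires combining the pointwise estimate at $\x_0$ with the $\tau$-uniform energy bound $\widetilde{E}^{1,\tau}(\psi)\le\widetilde{E}^{1,\tau}(\tilde{\phi})=\tfrac{1}{|\Dc|}\bigl(\tfrac{\beta}{2}+\int_\Dc V\,d\x\bigr)$, obtained by testing the constant $\tilde{\phi}=1/\sqrt{|\Dc|}$, and with the positivity of the coefficient $\tfrac{1}{2\tau}+\tilde\lambda$ guaranteed by Lemma~\ref{lem:convex}. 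This is the delicate step, and I would expect the bulk of the work—and the one place where the specific, second-order-matching form of $\widetilde{F}$ is essential—to lie in verifying that the truncated Euler--Lagrange equation cannot sustain $|\psi|>M$ on a set of positive measure.
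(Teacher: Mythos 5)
Your first half is correct, and in fact more explicit than the paper's own argument: the cancellation of the $\kappa$ terms on the unit sphere, the factorization $|\phi|^4-\widetilde{F}(\phi)=(|\phi|-M)^3(|\phi|+3M)$ for $|\phi|\ge M$ (which checks out), and the resulting inequality $\widetilde{E}^{1,\tau}(\phi)\le E^{1,\tau}(\phi)$ on $\{\|\phi\|_2=1\}$ with equality precisely when $|\phi|\le M$ a.e.; combined with Lemma~\ref{lem:phiLinf} this settles $\min\widetilde{E}^{1,\tau}\le\min E^{1,\tau}$. But the reverse inequality, which is the actual content of the lemma, is left open in your proposal, and you say so yourself: you reduce it to the claim $\|\psi\|_\infty\le M$ for a minimizer $\psi$ of the truncated problem, correctly observe that the maximum-principle argument of Lemma~\ref{lem:phiLinf} no longer applies because $\widetilde{f}$ grows only linearly beyond $M$ (the cubic growth that drove that estimate is exactly what the truncation removed), and then merely express the hope that this ``delicate step'' can be closed. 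No mechanism is supplied, and I do not see how the pointwise route could be rescued: the truncated Euler--Lagrange equation is consistent, as far as growth estimates go, with $|\psi|>M$, so the gap is genuine.

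The paper closes this gap by a different idea that bypasses any a priori $L^\infty$ bound on the truncated minimizer: convexity. The truncation \eqref{trunc_nonlin} is built with $C^1$ matching at $|\phi|=M$ so that $\widetilde{E}^{1,\tau}(|\phi|)$ remains convex as a functional of the density $\rho=|\phi|^2$, exactly as in Lemma~\ref{lem:convex} for the untruncated energy. For a convex functional over the convex set $\left\{\rho\ge 0,\ \int_{\Dc}\rho\, d\x=1\right\}$, every stationary point is a global minimizer and the positive minimizer is unique. Since $\widetilde{F}$ and $|\phi|^4$ agree together with their first derivatives on $\{|\phi|\le M\}$, the minimizer $\phi^*$ of $E^{1,\tau}$ (which satisfies $\|\phi^*\|_\infty\le M$ by Lemma~\ref{lem:phiLinf}) satisfies the first-order conditions of the truncated problem as well, hence is the unique positive global minimizer of $\widetilde{E}^{1,\tau}$; on $\phi^*$ the two energies coincide, and phase invariance yields \eqref{equi_TruncOptProb}. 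This identification of the truncated minimizer with $\phi^*$ via convexity is the step your proposal is missing: rather than proving that an abstract truncated minimizer stays below the level $M$ --- which your own analysis shows the pointwise estimates cannot deliver --- one shows the truncated problem has no minimizer other than $\phi^*$ to begin with.
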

\begin{proof}
	First, from the definition of $\widetilde{E}^{1,\tau}(\phi)$ in \eqref{eng_truncOrd1}, we observe that any minimizer $\phi^*$ of $\min_{\lVert \phi \rVert_2 = 1} E^{1,\tau}(\phi)$ must also minimize $\min_{\lVert \phi \rVert_2 = 1} \widetilde{E}^{1,\tau}(\phi)$. That is,
	$$
	\phi^* = \arg \min_{\lVert \phi \rVert_2 = 1} E^{1,\tau}(\phi) \implies \phi^* = \arg \min_{\lVert \phi \rVert_2 = 1} \widetilde{E}^{1,\tau}(\phi).
	$$
	Additionally, the truncation in \eqref{trunc_nonlin} ensures that $\widetilde{E}^{1,\tau}(|\phi|)$ is convex with respect to $|\phi|$. This convexity guarantees the uniqueness (up to a phase factor) of the positive minimizer. Hence, the minimizer $|\phi^*|$ is unique and globally minimizes $\min_{\substack{ \lVert \phi \rVert_2 = 1}} \widetilde{E}^{1,\tau}(|\phi|)$. Consequently, we obtain
	$$
	\min_{\substack{ \lVert \phi \rVert_2 = 1}} \widetilde{E}^{1,\tau}(|\phi|) = \min_{\substack{ \lVert \phi \rVert_2 = 1}} E^{1,\tau}(|\phi|).
	$$
	Since both $E^{1,\tau}(\phi)$ and $\widetilde{E}^{1,\tau}(\phi)$ are invariant under phase transformations, the equality \eqref{equi_TruncOptProb} holds.
\end{proof}

\begin{lemma}\label{lem:EconcOrd1_conc}
	For $\tau > 0$ and $\kappa \geq \|V(\x)\|_\infty + 3\beta M^2$, under Assumption~\ref{assum:BEC}, the functional $\widetilde{E}^{1,\tau}(\phi)$ defined in \eqref{eng_truncOrd1} is concave on the set $\{\phi \mid \|\phi\|_2 \leq 1\}$.
\end{lemma}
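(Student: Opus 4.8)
The plan is to prove concavity by reducing to the second variation along line segments: since $\widetilde{E}^{1,\tau}$ is twice Gâteaux differentiable (the truncation $\widetilde{F}$ is built precisely to be $C^2$, its quadratic continuation matching the value, slope, and curvature of $r \mapsto r^4$ at $r = \pm M$), it suffices to show that $\frac{d^2}{ds^2}\widetilde{E}^{1,\tau}(\phi_0 + s(\phi_1-\phi_0))\le 0$ for all $\phi_0,\phi_1$ in the set and all $s\in[0,1]$; a nonpositive second derivative along every segment yields concavity. I would first discard the additive constants $\frac{1}{2\tau}+\kappa$ and split the functional into the diffusion part $-\frac{1}{2\tau}\|e^{\frac{\tau}{2}\Delta}\phi\|_2^2$ and the local part $\int_{\Dc}\bigl(\frac{\beta}{2}\widetilde{F}(\phi)+(V(\x)-\kappa)|\phi|^2\bigr)\,d\x$, and treat the two separately.

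The diffusion term is the easy one. Because $\phi \mapsto e^{\frac{\tau}{2}\Delta}\phi$ is linear and $\|\cdot\|_2^2$ is a convex quadratic, $\phi \mapsto \|e^{\frac{\tau}{2}\Delta}\phi\|_2^2$ is a nonnegative quadratic form, so its negative multiple is concave; explicitly, the second variation in a direction $h$ equals $-\frac{1}{\tau}\|e^{\frac{\tau}{2}\Delta}h\|_2^2 \le 0$. This contribution is unconditionally concave and does not enter the condition on $\kappa$.

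The crux is the local term. Writing $\phi = a+ib$ and reading $\widetilde{F}$ through its radial profile $\widehat{F}(|\phi|)$, the construction gives $0 \le \widehat{F}''(r) \le 12M^2$ and $0 \le \widehat{F}'(r)/r \le 12M^2$ for all $r \ge 0$ (for $r \le M$ these are $12r^2$ and $4r^2$; for $r > M$ they equal $12M^2$ and $12M^2 - 8M^3/r$). Consequently the Hessian in $(a,b)$ of the integrand $\frac{\beta}{2}\widehat{F}(|\phi|) + (V(\x)-\kappa)|\phi|^2$, whose radial and tangential eigenvalues are $\frac{\beta}{2}\widehat{F}''(r) + 2(V-\kappa)$ and $\frac{\beta}{2}\widehat{F}'(r)/r + 2(V-\kappa)$, has both eigenvalues bounded above by $6\beta M^2 + 2(V(\x)-\kappa)$. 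The hypothesis $\kappa \ge \|V\|_\infty + 3\beta M^2$ forces this to be $\le 0$ pointwise, so the integrand is concave in $(a,b)$ and hence $\int_{\Dc}(\cdots)\,d\x$ is concave; adding the concave diffusion term finishes the argument.

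I expect the main obstacle to be the bookkeeping of the nonlinear term for complex-valued $\phi$: one must control not only the radial curvature $\widehat{F}''$ but also the tangential curvature $\widehat{F}'(r)/r$ coming from the modulus, and verify both are capped by $12M^2$ so that the single threshold $\kappa \ge \|V\|_\infty + 3\beta M^2$ suffices. (Restricting to real $\phi$, justified since the minimizer is real and positive up to phase, collapses this to the one-dimensional bound $\widetilde{F}'' \le 12M^2$.) A minor structural point worth noting is that the estimate never uses $\|\phi\|_2 \le 1$, so concavity in fact holds globally; the stated restriction to $\{\phi \mid \|\phi\|_2 \le 1\}$ is simply the domain relevant to the normalization constraint and the concave-hull reasoning, not a genuine limitation of the argument.
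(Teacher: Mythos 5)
Your proof is correct and follows essentially the same route as the paper: both compute the second variation, observe that the diffusion term contributes $-\frac{1}{\tau}\left\Vert e^{\frac{\tau}{2}\Delta}\eta\right\Vert_2^2\le 0$ unconditionally, and cap the Hessian of the truncated nonlinearity by $6\beta M^2$ so that $\kappa\ge\Vert V\Vert_\infty+3\beta M^2$ forces nonpositivity. Your radial/tangential eigenvalue bookkeeping (bounding both $\widehat{F}''$ and $\widehat{F}'/r$ by $12M^2$) is just a repackaging of the paper's direct estimate $4\beta|\phi|^2|\eta|^2+2\beta\operatorname{Re}\left(\phi^2\bar{\eta}^2\right)\le 6\beta\min\left\{\Vert\phi\Vert_\infty^2,M^2\right\}|\eta|^2$, and your side remark that the constraint $\Vert\phi\Vert_2\le1$ is never used is likewise consistent with the paper's argument.
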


\begin{proof}
	Computing the second variation of \(E^{1,\tau}(\phi)\), we obtain
	\begin{equation*}
		\begin{split}
			\nabla^2 \widetilde{E}^{1,\tau}(\phi)&[\eta, \eta] = \real \left[\int_\Dc \left(-\frac{1}{\tau} \left\vert \e^{\frac{\tau}{2}\Delta} \eta \right\vert^2 - 2(\kappa - V(\x)) \odot (\eta \bar{\eta}) \right)\, d\x \right. \\[.5em]
			& + \int_{|\phi(\x)| \le M} \left( 4\beta|\phi|^2 \odot (\eta \bar{\eta}) + 2\beta \phi^2 \odot (\bar{\eta} \bar{\eta}) \right) d\x \left. - \int_{|\phi(\x)| \ge M} 6\beta M^2|\eta|^2 \, d\x \right] \\[.5em]
			&\le \real \left( -2\int_{\Dc} \left( \kappa - V(\x) - 3\beta \min\left\{\Vert \phi \Vert_\infty^2, M^2\right\} \right) \odot (\eta \bar{\eta}) \, d\x \right),
		\end{split}
	\end{equation*}
	where \(\real(\cdot)\) denotes the real part, and \(\odot\) represents the Hadamard product.
	By selecting \(\kappa\) such that
	\[
	\kappa \ge \Vert V(\x) \Vert_\infty + 3\beta \min\left\{\Vert \phi \Vert_\infty^2, M^2\right\},
	\]
	we ensure
	$
	\nabla^2 \widetilde{E}^{1,\tau}(\phi)[\eta, \eta] \le 0.
	$
	This confirms the concavity of \(\widetilde{E}^{1,\tau}(\phi)\) on \(\left\{\phi \mid \Vert \phi \Vert_2 \le 1\right\}\), completing the proof.
\end{proof}

For a non-constant concave functional defined on a closed set, the local minimum, if it exists, must lie on the boundary of the set. This property enables us to demonstrate the equivalence of the minimization problems over the unit ball and the unit sphere, as stated in the following corollary.
\begin{corollary}\label{Cor:EconcOrd1_SphereBall}
	For $\tau > 0$, \(\kappa\ge \Vert V(\x) \Vert_\infty + 3\beta M^2\), under Assumption~\ref{assum:BEC}, the following equality holds
	\begin{equation}\label{equi_OpProb}
		\min\limits_{\Vert \phi\Vert_2 \leq 1} \widetilde{E}^{1,\tau}(\phi) = \min\limits_{\Vert \phi\Vert_2=1} \widetilde{E}^{1,\tau}(\phi),
	\end{equation}
	where $\widetilde{E}^{1,\tau}(\phi)$ is defined in  \eqref{eng_truncOrd1}.
	
\end{corollary}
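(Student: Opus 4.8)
The plan is to exploit the concavity of $\widetilde{E}^{1,\tau}$ on the convex set $B=\{\phi:\|\phi\|_2\le 1\}$, established in Lemma~\ref{lem:EconcOrd1_conc}, and to reduce the ball minimization to a sphere minimization by a one-dimensional chord argument. First I would record the trivial inequality $\min_{\|\phi\|_2\le 1}\widetilde{E}^{1,\tau}(\phi)\le \min_{\|\phi\|_2=1}\widetilde{E}^{1,\tau}(\phi)$, which holds because the unit sphere is contained in the unit ball; the entire content of the corollary is therefore the reverse inequality, namely that no interior point can do strictly better than the boundary.

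For the reverse inequality I would show that every point of the ball is dominated by some point of the sphere. Fix $\phi_0\in B$ with $\|\phi_0\|_2=r<1$. If $\phi_0\neq 0$, set $\psi=\phi_0/r$, so that $\|\psi\|_2=1$ and $\phi_0$ lies on the segment joining the two unit-norm points $\psi$ and $-\psi$. Restricting $\widetilde{E}^{1,\tau}$ to this segment produces a concave function of a single real variable, and a concave function on an interval attains its minimum at an endpoint; hence $\min\{\widetilde{E}^{1,\tau}(\psi),\widetilde{E}^{1,\tau}(-\psi)\}\le \widetilde{E}^{1,\tau}(\phi_0)$. If $\phi_0=0$, I would instead invoke the midpoint form of concavity, $\widetilde{E}^{1,\tau}(0)\ge \tfrac12\widetilde{E}^{1,\tau}(\psi)+\tfrac12\widetilde{E}^{1,\tau}(-\psi)$ for any unit $\psi$, to reach the same conclusion. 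In either case a point of the sphere has energy no larger than $\widetilde{E}^{1,\tau}(\phi_0)$, so $\inf_{\|\phi\|_2=1}\widetilde{E}^{1,\tau}\le \widetilde{E}^{1,\tau}(\phi_0)$. Taking the infimum over $\phi_0\in B$ then gives $\inf_{\|\phi\|_2\le 1}\widetilde{E}^{1,\tau}=\inf_{\|\phi\|_2=1}\widetilde{E}^{1,\tau}$.

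Finally I would upgrade infima to minima. Because the relaxed functional replaces the gradient term by the smoothing factor $e^{\frac{\tau}{2}\Delta}$, the functional $\widetilde{E}^{1,\tau}$ is finite for every $\phi\in L^2(\Dc)$: the truncated nonlinearity $\widetilde{F}$ grows only quadratically and $V\in L^\infty(\Dc)$, so $\widetilde{E}^{1,\tau}$ is well defined on all of $B$. Existence of a minimizer on the sphere follows by combining Theorem~\ref{thm:EconcOrd1_exis} with Lemma~\ref{lem:min_equivalence}, which identifies $\min_{\|\phi\|_2=1}\widetilde{E}^{1,\tau}$ with $\min_{\|\phi\|_2=1}E^{1,\tau}$; this turns the equality of infima into the asserted equality of minima in~\eqref{equi_OpProb}.

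The main obstacle I anticipate is not the chord estimate itself, which is routine once concavity is available, but the bookkeeping required to make the passage from $\le$ to $=$ fully rigorous: verifying that $\widetilde{E}^{1,\tau}$ is finite on the entire ball, treating the degenerate case $\phi_0=0$ uniformly with the generic case, and transferring attainment from the sphere (via Lemma~\ref{lem:min_equivalence} and Theorem~\ref{thm:EconcOrd1_exis}) so that the equality of infima is genuinely realized by a minimizer rather than only approached.
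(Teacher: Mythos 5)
Your proposal is correct and follows essentially the same route as the paper: the paper proves Corollary~\ref{Cor:EconcOrd1_SphereBall} by invoking the concavity of $\widetilde{E}^{1,\tau}$ from Lemma~\ref{lem:EconcOrd1_conc} together with the standard fact that a non-constant concave functional on a closed convex set attains its minima only on the boundary, which is exactly what your chord argument through $\pm\psi$ (with the midpoint case for $\phi_0=0$) makes explicit. Your additional bookkeeping---finiteness of $\widetilde{E}^{1,\tau}$ on the whole ball thanks to the truncation, and attainment via Theorem~\ref{thm:EconcOrd1_exis} and Lemma~\ref{lem:min_equivalence}---is a careful filling-in of details the paper leaves implicit, not a different method.
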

Combining  Lemma~\ref{lem:min_equivalence} with Corollary~\ref{Cor:EconcOrd1_SphereBall}, we obtain
\[
\min_{\lVert \phi \rVert_2 = 1} E^{1,\tau}(\phi)=\min_{\Vert \phi\Vert_2 \leq 1} \widetilde{E}^{1,\tau}(\phi),
\]
where $E^{1,\tau}(\phi)$ and $\widetilde{E}^{1,\tau}(\phi)$ are defined in \eqref{eng_concOrd1} and \eqref{eng_truncOrd1}, respectively.
This implies that solving the relaxed optimization problem \eqref{prob_relxOrd1} reduces to computing $\min_{\Vert \phi\Vert_2 \leq 1} \widetilde{E}^{1,\tau}(\phi)$, which can be achieved via the sequential linear programming approach:
\begin{equation}\label{prob_SLPA}
	\phi^{n+1} = \arg \min_{\|\phi\|_2 \leq 1} \widetilde{E}^{1,\tau}(\phi^n) + \langle \nabla \widetilde{E}^{1,\tau}(\phi^n), \phi - \phi^n \rangle,
\end{equation}
where
$$
\nabla \widetilde{E}^{1,\tau}(\phi) = -\frac{1}{\tau} e^{\tau \Delta} \phi + 2V(\x)\phi + \frac{\beta}{2} \widetilde{f}(\phi) - 2\kappa \phi,
$$
and $\widetilde{f}(\phi)$ is defined as
\begin{equation}\label{trunc_nonlinDerv}
	\widetilde{f}(\phi) := \frac{d\widetilde{F}(\phi)}{2d\phi} = 
	\begin{cases}
		12M^2\phi - 8M^3, & \phi > M, \\[.5em]
		4|\phi|^2\phi, & |\phi| \leq M, \\[.5em]
		12M^2\phi + 8M^3, & \phi < -M.
	\end{cases}
\end{equation}
The solution to \eqref{prob_SLPA} can be explicitly expressed as follows:
\begin{lemma}\label{lem:EconcOrd1solSLPA}
	For \(\tau > 0\),   the solution of \eqref{prob_SLPA} is
	\begin{equation}\label{num_solOrd1}
		\phi^{n+1} = \frac{\frac{1}{\tau}\e^{\tau \Delta} \phi^n - 2V(\x)\phi^n-\frac{\beta}{2}  \widetilde{f}(\phi^n) + 2\kappa \phi^n}{\left\Vert \frac{1}{\tau}\e^{\tau \Delta} \phi^n - 2V(\x)\phi^n-\frac{\beta}{2}  \widetilde{f}(\phi^n) + 2\kappa \phi^n \right\Vert_2},
	\end{equation}
	where $n\ge 0$, $\widetilde{f}(\cdot)$ is given in \eqref{trunc_nonlinDerv}.
\end{lemma}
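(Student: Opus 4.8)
The plan is to exploit the fact that the objective in \eqref{prob_SLPA} is affine in the optimization variable $\phi$, so that the subproblem collapses to the minimization of a single linear functional over the closed unit ball, whose minimizer follows immediately from the Cauchy--Schwarz inequality.

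First I would observe that, in the objective of \eqref{prob_SLPA}, both $\widetilde{E}^{1,\tau}(\phi^n)$ and the term $\langle \nabla \widetilde{E}^{1,\tau}(\phi^n), \phi^n \rangle$ are constants with respect to the optimization variable $\phi$. Discarding these constants, the problem is equivalent to
\[
\phi^{n+1} = \arg\min_{\|\phi\|_2 \le 1} \langle \nabla \widetilde{E}^{1,\tau}(\phi^n), \phi \rangle .
\]
Writing $g := \nabla \widetilde{E}^{1,\tau}(\phi^n)$ for brevity, this is the minimization of the linear map $\phi \mapsto \langle g, \phi \rangle$ over the unit ball.

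Next I would invoke the Cauchy--Schwarz inequality for the real inner product $\langle \cdot, \cdot \rangle$ defined earlier: for every $\phi$ with $\|\phi\|_2 \le 1$,
\[
\langle g, \phi \rangle \ge -\|g\|_2\,\|\phi\|_2 \ge -\|g\|_2 ,
\]
with equality holding precisely when $\phi = -g/\|g\|_2$. Hence, provided $\|g\|_2 \neq 0$, the unique minimizer is $\phi^{n+1} = -g/\|g\|_2$. Note that $\phi^{n+1}$ then has unit norm, so the constraint is active; this is consistent with the concavity-based equivalence established in Corollary~\ref{Cor:EconcOrd1_SphereBall}. Substituting the explicit gradient,
\[
-g = \frac{1}{\tau}\e^{\tau\Delta}\phi^n - 2V(\x)\phi^n - \frac{\beta}{2}\widetilde{f}(\phi^n) + 2\kappa\phi^n ,
\]
into $\phi^{n+1} = -g/\|g\|_2$ yields the stated formula \eqref{num_solOrd1} verbatim.

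There is no substantive analytic obstacle: the whole argument reduces to Cauchy--Schwarz once the affine structure of the linearized subproblem is recognized. The only point requiring a word of care is the tacit nondegeneracy condition $\|g\|_2 > 0$, which guarantees the normalization in \eqref{num_solOrd1} is well defined; should $g = 0$ occur, the iterate $\phi^n$ is already stationary for the linearized functional and the update is undefined, so we implicitly assume this degenerate case does not arise along the iteration.
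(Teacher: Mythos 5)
Your proof is correct and follows essentially the same route as the paper: reduce \eqref{prob_SLPA} to minimizing the linear functional $\langle \nabla \widetilde{E}^{1,\tau}(\phi^n), \phi\rangle$ over the unit ball and read off the minimizer $-\nabla \widetilde{E}^{1,\tau}(\phi^n)/\|\nabla \widetilde{E}^{1,\tau}(\phi^n)\|_2$. You merely make explicit two details the paper leaves implicit --- the Cauchy--Schwarz argument identifying the minimizer and the nondegeneracy condition $\|\nabla \widetilde{E}^{1,\tau}(\phi^n)\|_2 > 0$ --- both of which are sound additions.
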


\begin{proof}
	Since the first term in \eqref{prob_SLPA} is constant, it reduces to
	\[
	\arg \min_{\Vert \phi \Vert_2 \leq 1} \left\langle \nabla \widetilde{E}^{1,\tau}(\phi^n), \phi \right\rangle.
	\]
	Using the expression for \( \nabla \widetilde{E}^{1,\tau}(\phi^n) \), the minimizer is given by \eqref{num_solOrd1}.
	This completes the proof.
\end{proof}

\begin{theorem}\label{thm:EconcOrd1_disp}
	For \(\tau > 0\), \(\kappa\ge \Vert V(\x) \Vert_\infty + 3\beta M^2\), the numerical energy functional updated using \eqref{num_solOrd1} satisfies
	\begin{equation*}
		\widetilde{E}^{1,\tau}(\phi^{n+1}) \le \widetilde{E}^{1,\tau}(\phi^n),\quad n\ge 0,
	\end{equation*}
	where $\widetilde{E}^{1,\tau}(\cdot)$  is defined in \eqref{eng_truncOrd1}.
\end{theorem}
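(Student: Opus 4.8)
The plan is to combine the concavity of $\widetilde{E}^{1,\tau}$ from Lemma~\ref{lem:EconcOrd1_conc} (which holds precisely because the hypothesis $\kappa \ge \|V(\x)\|_\infty + 3\beta M^2$ is assumed) with the optimality of $\phi^{n+1}$ in the linearized subproblem \eqref{prob_SLPA}. This is a majorization--minimization argument. First I would record that both iterates lie in the set $\{\phi \mid \|\phi\|_2 \le 1\}$ on which $\widetilde{E}^{1,\tau}$ is concave: the normalization built into the explicit update \eqref{num_solOrd1} forces $\|\phi^{n+1}\|_2 = 1$, while $\|\phi^n\|_2 = 1$ by construction. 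Since the unit ball is convex, the whole segment joining $\phi^n$ and $\phi^{n+1}$ stays inside it.

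The key step is the first-order (tangent-plane) characterization of concavity. Restricting $\widetilde{E}^{1,\tau}$ to the segment $t \mapsto \phi^n + t(\phi^{n+1} - \phi^n)$ for $t \in [0,1]$ produces a differentiable scalar function that is concave, hence has a nonincreasing derivative; integrating its derivative over $[0,1]$ gives the bound
\begin{equation*}
	\widetilde{E}^{1,\tau}(\phi^{n+1}) \le \widetilde{E}^{1,\tau}(\phi^n) + \left\langle \nabla \widetilde{E}^{1,\tau}(\phi^n),\, \phi^{n+1} - \phi^n \right\rangle .
\end{equation*}
Here I use that $\widetilde{E}^{1,\tau}$ is Fr\'echet differentiable with the gradient written out just before Lemma~\ref{lem:EconcOrd1solSLPA}, so that the directional derivative at $\phi^n$ in the direction $\phi^{n+1}-\phi^n$ coincides with the inner product on the right.

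Next I would exploit that $\phi^{n+1}$ minimizes the linear functional in \eqref{prob_SLPA} over $\{\|\phi\|_2 \le 1\}$; dropping the constant term $\widetilde{E}^{1,\tau}(\phi^n)$ this is exactly the problem solved in Lemma~\ref{lem:EconcOrd1solSLPA}. Because $\phi^n$ is itself feasible ($\|\phi^n\|_2 = 1 \le 1$), optimality yields $\langle \nabla \widetilde{E}^{1,\tau}(\phi^n), \phi^{n+1}\rangle \le \langle \nabla \widetilde{E}^{1,\tau}(\phi^n), \phi^n\rangle$, i.e.
\begin{equation*}
	\left\langle \nabla \widetilde{E}^{1,\tau}(\phi^n),\, \phi^{n+1} - \phi^n \right\rangle \le 0 .
\end{equation*}
Substituting this into the tangent-plane bound gives $\widetilde{E}^{1,\tau}(\phi^{n+1}) \le \widetilde{E}^{1,\tau}(\phi^n)$, which is the claim.

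No serious obstacle is expected, since the argument is the standard descent estimate for linearizing a concave objective. The only point requiring genuine care is that the concavity inequality be applied globally along the segment rather than merely infinitesimally; this reduces entirely to the two feasibility facts noted in the first paragraph, namely that both $\phi^n$ and $\phi^{n+1}$ sit on the unit sphere and hence within the convex ball where Lemma~\ref{lem:EconcOrd1_conc} guarantees concavity.
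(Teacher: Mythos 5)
Your proposal is correct and follows exactly the paper's argument: the tangent-plane inequality from the concavity of $\widetilde{E}^{1,\tau}$ (Lemma~\ref{lem:EconcOrd1_conc}) combined with the optimality of $\phi^{n+1}$ in the linearized subproblem \eqref{prob_SLPA}, using feasibility of $\phi^n$, yields the descent inequality. The extra care you take in verifying that both iterates lie in the unit ball where concavity holds is implicit in the paper's proof, so the two arguments coincide.
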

\begin{proof}
	By applying Lemma~\ref{lem:EconcOrd1solSLPA}, we obtain 
	\begin{equation}
		\widetilde{E}^{1,\tau}(\phi^n) \ge \widetilde{E}^{1,\tau}(\phi^n) + \left\langle \nabla \widetilde{E}^{1,\tau}(\phi^n), \phi^{n+1} - \phi^n \right\rangle.
	\end{equation}
	Next, as established in Lemma~\ref{lem:EconcOrd1_conc}, the functional $\widetilde{E}^{1,\tau}(\phi)$ is concave when \(\kappa\ge \Vert V(\x) \Vert_\infty + 3\beta M^2\). This concavity implies
	\begin{equation}
		\widetilde{E}^{1,\tau}(\phi^n) + \left\langle \nabla \widetilde{E}^{1,\tau}(\phi^n), \phi^{n+1} - \phi^n \right\rangle \ge \widetilde{E}^{1,\tau}(\phi^{n+1}).
	\end{equation}
	Combining these inequalities, we deduce that
	$
	\widetilde{E}^{1,\tau}(\phi^{n+1}) \le \widetilde{E}^{1,\tau}(\phi^n).$
	This completes the proof.
\end{proof}

The result obtained in Theorem~\ref{thm:EconcOrd1_disp} establishes that the numerical energy functional decreases with each iteration of the algorithm. In the following, we describe the algorithm used to compute the local minimum of the relaxed problem. This algorithm iteratively updates the approximation of the solution by solving a sequential linear programming problem. 
\begin{algorithm}[H]
	\caption{Algorithm for computing the local minimum of \eqref{prob_relxOrd1}} \label{Alg:EconcOrd1}
	\begin{algorithmic}
		\State {\bf Input:} Let $\Dc$ be a given bounded domain, $\tau > 0$, $\tol > 0$, $V(\x)$,  $\kappa > 0$, $N_{\max}$ be the maximum number of outer iterations, and $\phi^0$ be the initial condition.
		\State {\bf Output:} $\phi^{n+1}$, the numerical approximation of the local minimum of \eqref{prob_relxOrd1}.
		
		\State Initialize $n = 0$.
		\State Update $\phi^{n+1}$ by \eqref{num_solOrd1}.
		
		\If{the stopping criterion is met}
		\State Terminate the iteration.
		\EndIf
		
		\State Set $n = n + 1$.
		\State \textbf{End While}
	\end{algorithmic}
\end{algorithm}
\begin{remark}
	The term \( \e^{\tau \Delta}\phi^n \) in \eqref{num_solOrd1} can be efficiently computed using the fast Fourier transform (FFT).  The computational complexity of each iteration is \( O(N\log(N)) \), where \( N \) denotes the number of spatial grid points.   The unconditional energy decaying property of $\widetilde{E}^{1,\tau}$ for sufficiently large $\kappa$ ensures convergence, which leads to the minimizer of $E^{1,\tau}$.
\end{remark}


\subsection{Second order accurate relaxation}  \label{sec:algOrd2}
In this subsection, we focus on deriving the second-order accurate relaxed energy functional and its associated optimization algorithm. To achieve this, we begin by expanding \( e^{c\tau \Delta}\phi \) for a positive constant \( c \). The goal is to provide a higher-order approximation of the energy functional that improves upon the first-order approach discussed previously. The expansion is given by:
\[
\left\langle e^{c\tau \Delta} \phi, \phi \right\rangle = \left\langle \phi, \phi \right\rangle + \left\langle c\tau \Delta \phi, \phi \right\rangle + \left\langle \frac{c^2\tau^2}{2}\Delta^2 \phi, \phi \right\rangle + \left\langle \sum_{k=3}^{\infty} \frac{(c\tau \Delta)^k}{k!} \phi, \phi \right\rangle.
\]For \( \phi \in H^2(\Dc) \), we can derive
\begin{align}  
	\left\langle e^{\tau \Delta} \phi, \phi \right\rangle &= \left\langle \phi, \phi \right\rangle + \left\langle \tau \Delta \phi, \phi \right\rangle + \left\langle \frac{\tau^2}{2} \Delta^2 \phi, \phi \right\rangle + O(\tau^3), \label{expand_tau} \\
	\left\langle e^{\frac{\tau}{2} \Delta} \phi, \phi \right\rangle &= \left\langle \phi, \phi \right\rangle + \left\langle \frac{\tau}{2} \Delta \phi, \phi \right\rangle + \left\langle \frac{\tau^2}{8} \Delta^2 \phi, \phi \right\rangle + O(\tau^3). \label{expand_tauo2}
\end{align}
A linear combination of \eqref{expand_tau} and \eqref{expand_tauo2} gives
\[
\left\langle e^{\tau \Delta} \phi, \phi \right\rangle - 4 \left\langle e^{\frac{\tau}{2} \Delta}\phi, \phi \right\rangle = -3 \|\phi\|_2^2 - \left\langle \tau \Delta \phi, \phi \right\rangle + O(\tau^3).
\]
For \( \Vert \phi\Vert_2 = 1 \) with periodic boundary conditions on \( \partial\Dc \), it holds that
\begin{equation}\label{app_GinzbOrd2}
	-\int_{\Dc} \frac{1}{2} (\Delta \phi) \bar{\phi} \, d\x = \frac{3}{2\tau} + \frac{1}{2\tau} \int_{\Dc} \left( |e^{\frac{\tau}{2} \Delta} \phi|^2 - 4 |e^{\frac{\tau}{4} \Delta} \phi|^2 \right) d\x + O(\tau^2).
\end{equation}
Utilizing this result, we derive the second-order accurate relaxed energy functional:
\begin{equation} \label{eng_concOrd2}
	E^{2,\tau}(\phi) = \frac{3}{2\tau} + \int_{\Dc} \left[ \frac{1}{2\tau} \left( \left| e^{\frac{\tau}{2} \Delta} \phi \right|^2 - 4 \left| e^{\frac{\tau}{4} \Delta} \phi \right|^2 \right) + V(\x) |\phi|^2 + \frac{\beta}{2} |\phi|^4  \right] \, d\x.
\end{equation}
Accordingly, the associated optimization problem is defined as
\begin{equation} \label{prob_relxOrd2}
	\min_{\Vert \phi\Vert_2=1} E^{2,\tau}(\phi).
\end{equation}
As shown in \eqref{app_GinzbOrd2}, it holds that
\[
\lim_{\tau \to 0} E^{2,\tau}(\phi) = E(\phi), \quad \text{for } \phi \in H^2(\Dc),
\]
thereby demonstrating the consistency between the relaxed optimization problem \eqref{prob_relxOrd2} and the original problem \eqref{prob_orig}. 

While this second-order relaxation improves accuracy, the analysis of the existence of local minimizers of \eqref{prob_relxOrd2} remains a nontrivial challenge within the current theoretical framework and is left for future investigation. In this subsection, we focus on the development of an efficient numerical algorithm. To this end, the quartic nonlinearity \( |\phi|^4 \) in \eqref{eng_concOrd2} is truncated using the function \( \widetilde{F}(\phi) \) defined in \eqref{trunc_nonlin} (similar arguments could show the minimizer satisfies similar $L^\infty$ bounds with probably different $M$), and a regularization parameter \( \kappa > 0 \) is introduced. The resulting truncated energy functional is given by
\begin{equation}\label{eng_truncOrd2}
	\widetilde{E}^{2,\tau}(\phi) \coloneqq \frac{3}{2\tau} + \int_{\mathcal{D}} 
	\frac{1}{2\tau} \left( \left| e^{\frac{\tau}{2} \Delta} \phi \right|^2 - 4 \left| e^{\frac{\tau}{4} \Delta} \phi \right|^2 \right) 
	+ V(\x) |\phi|^2 + \frac{\beta}{2} \widetilde{F}(\phi) - \kappa |\phi|^2\, d\x + \kappa,
\end{equation}
and $ \widetilde{E}^{2,\tau}$ would produce the same minimizer as that of $ E^{2,\tau}$. Hence, it suffices to consider $\widetilde{E}^{2,\tau}$ hereafter.

\begin{lemma}\label{lem:concavityOrd2}
	For \(\tau > 0\) and \(\kappa \ge \Vert V(\x)\Vert_\infty + 3\beta M^2\) sufficiently large, under Assumption~\ref{assum:BEC}, the functional \(\widetilde{E}^{2,\tau}(\phi)\) defined in \eqref{eng_truncOrd2} is concave on the set \(\{\phi \mid \Vert \phi \Vert_2 \le 1\}\).
\end{lemma}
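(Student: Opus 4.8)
The plan is to mirror the second-variation argument used for Lemma~\ref{lem:EconcOrd1_conc}, exploiting the fact that $\widetilde{E}^{2,\tau}$ and $\widetilde{E}^{1,\tau}$ differ \emph{only} in their relaxation term: the potential contribution $V(\x)|\phi|^2$, the truncated nonlinearity $\tfrac{\beta}{2}\widetilde{F}(\phi)$, and the regularization $-\kappa|\phi|^2$ are identical in \eqref{eng_truncOrd1} and \eqref{eng_truncOrd2}. Accordingly I would first compute the second variation and organize it as
\begin{equation*}
	\nabla^2 \widetilde{E}^{2,\tau}(\phi)[\eta,\eta] = \frac{1}{\tau}\real\int_{\Dc}\left(\left|\e^{\frac{\tau}{2}\Delta}\eta\right|^2 - 4\left|\e^{\frac{\tau}{4}\Delta}\eta\right|^2\right) d\x + R(\phi)[\eta,\eta],
\end{equation*}
where $R(\phi)[\eta,\eta]$ collects exactly the potential and (truncated) nonlinear contributions and was already shown, in the proof of Lemma~\ref{lem:EconcOrd1_conc}, to satisfy
\begin{equation*}
	R(\phi)[\eta,\eta] \le -2\real\int_{\Dc}\left(\kappa - V(\x) - 3\beta\min\{\Vert\phi\Vert_\infty^2, M^2\}\right)\odot(\eta\bar\eta)\, d\x.
\end{equation*}
Everything therefore reduces to controlling the new relaxation term.

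The crux, and the only genuinely new point relative to the first-order case, is that the relaxation term now carries a \emph{positive} contribution $+\tfrac{1}{2\tau}|\e^{\frac{\tau}{2}\Delta}\phi|^2$, so its concavity is no longer automatic and must be extracted from the coefficient structure. I would prove the $L^2$ estimate
\begin{equation*}
	\left\Vert \e^{\frac{\tau}{2}\Delta}\eta\right\Vert_2^2 \le 4\left\Vert \e^{\frac{\tau}{4}\Delta}\eta\right\Vert_2^2 \qquad \text{for all } \eta \in L^2(\Dc),
\end{equation*}
which is precisely what forces the bracketed integral above to be nonpositive. The cleanest route is operator-theoretic: by the semigroup property $\e^{\frac{\tau}{2}\Delta} = (\e^{\frac{\tau}{4}\Delta})^2$, and since the heat semigroup is self-adjoint with $\Vert \e^{t\Delta}\Vert_{L^2\to L^2}\le 1$ for $t\ge 0$, applying the contraction $\e^{\frac{\tau}{4}\Delta}$ to $\e^{\frac{\tau}{4}\Delta}\eta$ can only decrease the $L^2$ norm, giving $\Vert \e^{\frac{\tau}{2}\Delta}\eta\Vert_2 \le \Vert \e^{\frac{\tau}{4}\Delta}\eta\Vert_2$, and the factor $4$ is then comfortable slack. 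Equivalently, expanding $\eta$ in the periodic eigenbasis of $\Delta$ with eigenvalues $-\lambda_j\le 0$ reduces the claim to the scalar inequality $e^{-\tau\lambda_j}\le 4e^{-\tau\lambda_j/2}$, i.e. $s^2\le 4s$ with $s=e^{-\tau\lambda_j/2}\in(0,1]$, which is immediate. This shows the relaxation part of $\nabla^2\widetilde{E}^{2,\tau}(\phi)[\eta,\eta]$ is $\le 0$.

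Combining the two estimates, the choice $\kappa \ge \Vert V(\x)\Vert_\infty + 3\beta M^2$ makes the integrand $\kappa - V(\x) - 3\beta\min\{\Vert\phi\Vert_\infty^2, M^2\}$ nonnegative on $\{\Vert\phi\Vert_2\le 1\}$, so that $R(\phi)[\eta,\eta]\le 0$ as well; hence $\nabla^2\widetilde{E}^{2,\tau}(\phi)[\eta,\eta]\le 0$ for every $\eta$, which is the desired concavity on $\{\phi\mid\Vert\phi\Vert_2\le 1\}$. I do not anticipate a serious obstacle here, since the nonlinearity/potential part is inherited verbatim from Lemma~\ref{lem:EconcOrd1_conc}; the only item requiring genuine care is the sign of the positive relaxation term, and I would state the contraction argument (or the frequency-by-frequency computation) rigorously, noting that $\e^{c\Delta}$ maps $L^2$ smoothly into every Sobolev space so the relevant quadratic form is well defined for all $\eta\in L^2(\Dc)$ and no additional regularity of $\phi$ is required.
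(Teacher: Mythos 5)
Your proposal is correct and follows essentially the same route as the paper: compute the second variation, handle the potential/truncated-nonlinearity/regularization terms exactly as in Lemma~\ref{lem:EconcOrd1_conc} under \(\kappa \ge \Vert V(\x)\Vert_\infty + 3\beta M^2\), and show the relaxation part is nonpositive by spectral decomposition. The only difference is that the paper merely asserts ``the spectral decomposition yields'' the nonpositivity of \(\frac{1}{\tau}\int_{\Dc}\bigl(\vert \e^{\frac{\tau}{2}\Delta}\eta\vert^2 - 4\vert \e^{\frac{\tau}{4}\Delta}\eta\vert^2\bigr)\,d\x\), whereas you supply the missing detail (the scalar inequality \(s^2 \le 4s\) for \(s = e^{-\tau\lambda_j/2}\in(0,1]\)) plus an equivalent semigroup-contraction argument via \(\e^{\frac{\tau}{2}\Delta} = (\e^{\frac{\tau}{4}\Delta})^2\), both of which are valid.
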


\begin{proof}
	The second variation of \(\widetilde{E}^{2,\tau}(\phi)\) evaluated at \(\phi\) for a perturbation \(\eta\) takes the form
	\begin{align*}
		\nabla^2 \widetilde{E}^{2,\tau}(\phi)&[\eta, \eta] = \real\left[\int_\Dc \left(\frac{1}{\tau} \left( \left| e^{\frac{\tau}{2} \Delta} \eta \right|^2 - 4 \left| e^{\frac{\tau}{4} \Delta} \eta \right|^2 \right) - 2(\kappa - V(\x)) \odot (\eta \bar{\eta}) \right)\,d\x\right. \\[0.5em]
		& \left.+ \int_{|\phi(\x)| \le M} \left( 4\beta|\phi|^2 \odot (\eta \bar{\eta}) + 2\beta \phi^2 \odot (\bar{\eta} \bar{\eta}) \right) d\x - \int_{|\phi(\x)| \ge M} 6\beta M^2|\eta|^2 \, d\x\right].
	\end{align*}
	The spectral decomposition yields
	\[
	\int_\Dc \frac{1}{\tau} \left( \left| e^{\frac{\tau}{2} \Delta} \eta \right|^2 - 4 \left| e^{\frac{\tau}{4} \Delta} \eta \right|^2 \right)\, d\x \leq 0.
	\]
	Moreover, for \(\kappa \ge \Vert V(\x)\Vert_\infty + 3\beta M^2\), we obtain
	\begin{align*}
		- 2\int_\Dc \left(\kappa - V(\x)\right) \odot (\eta \bar{\eta}) \,d\x 
		&+ \int_{|\phi(\x)| \le M} \left( 4\beta|\phi|^2 \odot (\eta \bar{\eta}) + 2\beta \phi^2 \odot (\bar{\eta} \bar{\eta}) \right) \,d\x \\
	&	- \int_{|\phi(\x)| \ge M} 6\beta M^2|\eta|^2 \, d\x \leq 0.
	\end{align*}
	Hence, \(\widetilde{E}^{2,\tau}(\phi)\) is concave on the set \(\{\phi \mid \Vert \phi \Vert_2 \le 1\}\) with  \(\kappa \ge \Vert V(\x)\Vert_\infty + 3\beta M^2\). This completes the proof.
\end{proof}

The concavity property of \(\widetilde{E}^{2,\tau}(\phi)\) guarantees that any local minimizer of the constrained optimization problem
$$
\min_{\|\phi\|_2 \leq 1} \widetilde{E}^{2,\tau}(\phi)  
$$  
must occur on the unit sphere \(\{\phi \in L^2(\mathcal{D}) \mid \|\phi\|_2 = 1\}\). Therefore, we have 
$$
\min_{\|\phi\|_2 \leq 1} \widetilde{E}^{2,\tau}(\phi) = \min_{\|\phi\|_2 = 1} \widetilde{E}^{2,\tau}(\phi).  
$$  
This equivalence allows us to solve \eqref{prob_relxOrd2} using sequential linear programming.  At each iteration, we  solve the subproblem
\begin{equation}\label{prob_SLPA2}
	\phi^{n+1} = \arg\min_{\Vert \phi\Vert_2 \leq 1} \widetilde{E}^{2,\tau}(\phi^n) + \langle \nabla \widetilde{E}^{2,\tau}(\phi^n), \phi - \phi^n \rangle.
\end{equation}
The solution to \eqref{prob_SLPA2} can be derived using an approach similar to that used in Lemma~\ref{lem:EconcOrd1solSLPA}. 
\begin{lemma}\label{lem:solSLPA2}
	For \(\tau > 0\),   the solution of \eqref{prob_SLPA2} is
	\begin{equation}\label{num_sol2}
		\phi^{n+1} = \frac{\frac{1}{\tau}\left(-\e^{\tau \Delta} \phi^n+4\e^{\frac{\tau}{2}\Delta} \phi^n\right) - 2V(\x)\phi^n-\frac{\beta}{2}  \widetilde{f}(\phi^n) + 2\kappa \phi^n}{\left\Vert \frac{1}{\tau}\left(-\e^{\tau \Delta} \phi^n+4\e^{\frac{\tau}{2}\Delta} \phi^n\right) - 2V(\x)\phi^n -\frac{\beta}{2}  \widetilde{f}(\phi^n) + 2\kappa \phi^n \right\Vert_2},
	\end{equation}
	where $\widetilde{f}(\cdot)$ is given in \eqref{trunc_nonlinDerv}.
\end{lemma}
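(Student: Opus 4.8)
The plan is to mirror the argument of Lemma~\ref{lem:EconcOrd1solSLPA}, reducing the quadratically constrained linearized subproblem \eqref{prob_SLPA2} to the minimization of a single linear functional over the unit ball, whose minimizer is explicit. First I would discard the terms that do not depend on the optimization variable. In \eqref{prob_SLPA2} the quantity $\widetilde{E}^{2,\tau}(\phi^n) - \langle \nabla \widetilde{E}^{2,\tau}(\phi^n), \phi^n \rangle$ is a constant, so \eqref{prob_SLPA2} is equivalent to
\[
\phi^{n+1} = \arg\min_{\|\phi\|_2 \le 1} \left\langle \nabla \widetilde{E}^{2,\tau}(\phi^n), \phi \right\rangle.
\]

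Next I would compute the Fr\'echet derivative of the truncated functional \eqref{eng_truncOrd2}. The only step requiring care is the pair of heat-flow terms, which I would handle using the self-adjointness of the semigroup $\e^{t\Delta}$ on $L^2(\Dc)$: the first variation of $\int_\Dc | \e^{\frac{\tau}{2}\Delta}\phi |^2\,d\x$ equals $2\,\e^{\frac{\tau}{2}\Delta}(\e^{\frac{\tau}{2}\Delta}\phi) = 2\,\e^{\tau\Delta}\phi$, and similarly the variation of $\int_\Dc | \e^{\frac{\tau}{4}\Delta}\phi |^2\,d\x$ equals $2\,\e^{\frac{\tau}{2}\Delta}\phi$. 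Combining these with the contributions from $V(\x)|\phi|^2$, the truncated nonlinearity $\frac{\beta}{2}\widetilde{F}(\phi)$, and the $-\kappa|\phi|^2$ regularization (exactly as in the first-order gradient), I obtain
\[
\nabla \widetilde{E}^{2,\tau}(\phi) = \frac{1}{\tau}\bigl( \e^{\tau\Delta}\phi - 4\,\e^{\frac{\tau}{2}\Delta}\phi \bigr) + 2V(\x)\phi + \frac{\beta}{2}\widetilde{f}(\phi) - 2\kappa\phi,
\]
with $\widetilde{f}$ as in \eqref{trunc_nonlinDerv}.

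Finally I would solve the reduced linear problem by Cauchy--Schwarz. Writing $g := \nabla \widetilde{E}^{2,\tau}(\phi^n)$, one has $\langle g, \phi \rangle \ge -\|g\|_2 \|\phi\|_2 \ge -\|g\|_2$ for all $\phi$ in the unit ball, with equality if and only if $\phi = -g/\|g\|_2$. Substituting the explicit expression for $g$ and noting that the numerator in \eqref{num_sol2} is precisely $-g$ then yields the stated formula. The computation is not deep; the only genuinely delicate points are the self-adjointness identity used to pass from $|\e^{\frac{\tau}{2}\Delta}\phi|^2$ and $|\e^{\frac{\tau}{4}\Delta}\phi|^2$ to $\e^{\tau\Delta}\phi$ and $\e^{\frac{\tau}{2}\Delta}\phi$ in the gradient, and the well-definedness of the normalization, which requires $g \not\equiv 0$; I would record that the iterate \eqref{num_sol2} is defined whenever the denominator is strictly positive, the degenerate case $g \equiv 0$ corresponding to a fixed point of the scheme.
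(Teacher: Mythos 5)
Your proposal is correct and follows essentially the same route as the paper: the paper likewise reduces \eqref{prob_SLPA2} to minimizing the linear functional $\langle \nabla \widetilde{E}^{2,\tau}(\phi^n), \phi \rangle$ over the unit ball (mirroring Lemma~\ref{lem:EconcOrd1solSLPA}) and reads off the normalized negative gradient as the minimizer. Your write-up merely makes explicit what the paper leaves implicit --- the semigroup self-adjointness identity $\langle \e^{\frac{\tau}{2}\Delta}\phi, \e^{\frac{\tau}{2}\Delta}\eta\rangle = \langle \e^{\tau\Delta}\phi, \eta\rangle$ in the gradient computation, the Cauchy--Schwarz equality case, and the nondegeneracy condition $\nabla \widetilde{E}^{2,\tau}(\phi^n) \not\equiv 0$ ensuring the denominator in \eqref{num_sol2} is positive --- all of which are accurate refinements rather than deviations.
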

The stability of the relaxed energy functional under the iterative scheme can also be established. The techniques used for this analysis are similar to those in Theorem~\ref{thm:EconcOrd1_disp}, and the detailed proof is omitted here for brevity. The result is stated as follows:
\begin{theorem}\label{thm:EconcOrd2_disp}
	For \(\tau > 0\),  \(\kappa\ge\Vert V(\x)\Vert_\infty+3\beta M^2 \),  the numerical energy functional updated via \eqref{num_sol2} satisfies
	\begin{equation*}
		\widetilde{E}^{2,\tau}(\phi^{n+1}) \le \widetilde{E}^{2,\tau}(\phi^n),\quad n\ge 0,
	\end{equation*}
	where $ \widetilde{E}^{2,\tau}(\cdot)$ is the relaxed energy functional as given in \eqref{eng_truncOrd2}.
\end{theorem}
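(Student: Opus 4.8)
The plan is to mirror the argument of Theorem~\ref{thm:EconcOrd1_disp}, simply substituting the second-order objects for their first-order counterparts. Both structural ingredients are already available: the explicit minimizer of the linearized subproblem from Lemma~\ref{lem:solSLPA2}, and the concavity of $\widetilde{E}^{2,\tau}$ on the closed ball $\{\phi \mid \|\phi\|_2 \le 1\}$ from Lemma~\ref{lem:concavityOrd2}, which holds precisely under the stated hypothesis $\kappa \ge \|V(\x)\|_\infty + 3\beta M^2$. The proof then reduces to a two-inequality sandwich.

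First I would exploit the optimality of $\phi^{n+1}$. By Lemma~\ref{lem:solSLPA2}, the iterate $\phi^{n+1}$ defined by \eqref{num_sol2} is the minimizer over $\{\|\phi\|_2 \le 1\}$ of the linearized functional appearing in \eqref{prob_SLPA2}, namely $\phi \mapsto \widetilde{E}^{2,\tau}(\phi^n) + \langle \nabla \widetilde{E}^{2,\tau}(\phi^n), \phi - \phi^n \rangle$. Since $\phi^n$ is itself feasible (the normalization in \eqref{num_sol2} gives $\|\phi^n\|_2 = 1 \le 1$), the value of this linearized functional at $\phi^{n+1}$ cannot exceed its value at $\phi^n$; the latter equals $\widetilde{E}^{2,\tau}(\phi^n)$ because the linear term vanishes at $\phi = \phi^n$. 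This yields
\[
\widetilde{E}^{2,\tau}(\phi^n) \ge \widetilde{E}^{2,\tau}(\phi^n) + \big\langle \nabla \widetilde{E}^{2,\tau}(\phi^n),\, \phi^{n+1} - \phi^n \big\rangle.
\]

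Next I would invoke concavity. Because $\widetilde{E}^{2,\tau}$ is concave on the ball by Lemma~\ref{lem:concavityOrd2}, and both $\phi^n$ and $\phi^{n+1}$ lie in that convex set, the first-order characterization of concavity gives
\[
\widetilde{E}^{2,\tau}(\phi^{n+1}) \le \widetilde{E}^{2,\tau}(\phi^n) + \big\langle \nabla \widetilde{E}^{2,\tau}(\phi^n),\, \phi^{n+1} - \phi^n \big\rangle.
\]
Chaining this with the optimality inequality immediately produces $\widetilde{E}^{2,\tau}(\phi^{n+1}) \le \widetilde{E}^{2,\tau}(\phi^n)$, which is the claim.

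I expect no substantive obstacle, as the only genuinely new analytic content relative to the first-order case — that the diffusion contribution $\tfrac{1}{\tau}\big(|e^{\frac{\tau}{2}\Delta}\eta|^2 - 4|e^{\frac{\tau}{4}\Delta}\eta|^2\big)$ to the second variation is nonpositive via the spectral decomposition — is already absorbed into Lemma~\ref{lem:concavityOrd2}, which I may assume. The only points warranting care are bookkeeping: confirming the feasible set $\{\|\phi\|_2 \le 1\}$ is convex so the concavity inequality applies along the segment joining $\phi^n$ and $\phi^{n+1}$, and that the gradient used in \eqref{prob_SLPA2} is the true Fr\'echet derivative of $\widetilde{E}^{2,\tau}$, guaranteed by the $C^1$ truncation \eqref{trunc_nonlin} and the smoothing of the heat semigroup. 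These are exactly the verifications carried out in the first-order proof, which is why the detailed argument can be omitted for brevity.
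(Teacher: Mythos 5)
Your proof is correct and is exactly the argument the paper intends: the paper omits the proof, stating it mirrors Theorem~\ref{thm:EconcOrd1_disp}, whose proof is precisely your two-inequality sandwich combining the optimality of $\phi^{n+1}$ for the linearized subproblem (Lemma~\ref{lem:solSLPA2}) with the first-order concavity inequality from Lemma~\ref{lem:concavityOrd2}. No discrepancies.
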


Theorem~\ref{thm:EconcOrd2_disp} demonstrates that the iterative scheme ensures non-increasing energy values, preserving stability at each iteration. The pseudocode is presented in Algorithm~\ref{Alg:EconcOrd2}.
\begin{algorithm}[H]
	\caption{Algorithm for computing the local minimum of \eqref{prob_relxOrd2}} \label{Alg:EconcOrd2}
	\begin{algorithmic}
		\State {\bf Input:} Let $\Dc$ be a given bounded domain, $\tau > 0$, $\tol > 0$, $V(\x)$,  $\kappa > 0$, $N_{\max}$ be the maximum number of outer iterations, and $\phi^0$ be the initial condition.
		\State {\bf Output:} $\phi^{n+1}$, the numerical approximation of the local minimum of \eqref{prob_relxOrd2}.
		
		\State Initialize $n = 0$.
		
		\State Update $\phi^{n+1}$ by \eqref{num_sol2}.
		
		\If{the  stopping criterion is met}
		\State Terminate the iteration.
		\EndIf
		
		\State Set $n = n + 1$.
		\State \textbf{End While}
	\end{algorithmic}
\end{algorithm}
\subsection{Adaptive algorithms}
The results in Sections~\ref{sec:algOrd1} and \ref{sec:algOrd2} indicate that the numerical solutions obtained by Algorithms~\ref{Alg:EconcOrd1} and \ref{Alg:EconcOrd2} will converge to the ground state of the original problem~\eqref{prob_orig} as \( \tau \to 0 \). To enhance the efficiency of these algorithms when using a small value of \( \tau = \tau_f \), we first compute a solution with a relatively large step size \( \tau_0 \). This solution is then used as the initial value for subsequent iterations with smaller step sizes. We continue to reduce the step size in this manner until \( \tau = \tau_f \). The details of the implementation are described in Algorithms~\ref{Alg:EconcOrd1_Adap} and~\ref{Alg:EconcOrd2_Adap}.
\begin{algorithm}[H]
	\caption{Adaptive $\tau$ algorithm for Algorithm~\ref{Alg:EconcOrd1}.} \label{Alg:EconcOrd1_Adap}
	\begin{algorithmic}
		\State {\bf Input:} $\Dc$: bounded domain; $\tol$: tolerance; $V(\x)$: potential function; $\kappa$: non-negative parameter; $N_{\max}$: maximum outer iterations; $\tau_0$: the coarsest step size; $\tau_f$: lower bound for time step size; $r$: reduction factor for step size; $\phi^0$: initial condition.
		\State {\bf Output:} $\phi^{n+1}$, the numerical approximation of the local minimum of \eqref{prob_relxOrd1}.
		\While {$\tau \ge \tau_f$}
		\State Run Algorithm~\ref{Alg:EconcOrd1}.
		\State Set $\tau = \tau / r$.
		\EndWhile
	\end{algorithmic}
\end{algorithm}
\begin{algorithm}[H]
	\caption{Adaptive  $\tau$ algorithm for Algorithm~\ref{Alg:EconcOrd2}.} \label{Alg:EconcOrd2_Adap}
	\begin{algorithmic}
		\State {\bf Input:} $\Dc$: bounded domain; $\tol$: tolerance; $V(\x)$: potential function; $\kappa$: non-negative parameter; $N_{\max}$: maximum outer iterations; $\tau_0$: the coarsest step size; $\tau_f$: lower bound for time step size; $r$: reduction factor for step size; $\phi^0$: initial condition.
		\State {\bf Output:} $\phi^{n+1}$, the numerical approximation of the local minimum of \eqref{prob_relxOrd2}.
		\State Set $\tau=\tau_0.$
		\While {$\tau \ge \tau_f$}
		\State Run Algorithm~\ref{Alg:EconcOrd2}.
		\State Set $\tau = \tau / r$.
		\EndWhile
	\end{algorithmic}
\end{algorithm}

\section{Numerical results} \label{sec:numerical_results}
In this section, we demonstrate the stability, convergence, efficiency, and energy dissipation properties of Algorithms~\ref{Alg:EconcOrd1} to~\ref{Alg:EconcOrd2_Adap} through a series of numerical experiments. Spatial discretization is carried out using the Fourier pseudospectral method. The maximum number of iterations is set to \( N_{\max} = 80000 \). In the numerical results, \( E \) represents the numerical approximation of the original energy functional \eqref{eng_orig}, while \( E^\tau \) denotes the relaxed energy functional. The stopping criterion is defined as  
\[
\frac{\Vert \phi^{n+1} - \phi^n \Vert_\infty}{\tau} \leq \tol,
\]  
where \(\tol\) is a prescribed tolerance. All algorithms are implemented in MATLAB and tested on a laptop with a 2.7 GHz Intel Core i5 processor and 8 GB of RAM.

\subsection{Numerical results in 1D}
\begin{example}\label{Ex_omg01D}
	In this example, we consider the local minimum of   \eqref{eng_concOrd1} with  periodic boundary conditions on the interval $[-16, 16]$. The potential function and parameters are specified as
	\begin{align}
		V(x) = \frac{x^2}{2} + 25 \sin^2\left(\frac{\pi x}{4}\right), \quad \beta = 250, \quad \phi_0(x) = \exp\left(-\frac{|x|^2}{2}\right)/\pi^{1/4}.
	\end{align}
\end{example}
\subsubsection{\texorpdfstring{Influence of parameter $\kappa$ on algorithm performance}{Influence of parameter kappa on algorithm performance}}

In Theorems~\ref{thm:EconcOrd1_disp} and \ref{thm:EconcOrd2_disp}, it is shown that choosing \(\kappa\) sufficiently large guarantees the numerical dissipation of the relaxed energy for Algorithms~\ref{Alg:EconcOrd1}--\ref{Alg:EconcOrd2_Adap}.   Here,   \(\x\) denotes the higher-dimensional case, while \(x\) represents the one-dimensional  scenario. To investigate its impact, we tested three different choices of \(\kappa\) in one-dimensional examples: two cases where \(\kappa\) varies with the iteration number, denoted as \(\kappa^n\), and one case where \(\kappa\) remains fixed throughout the iterations. In the numerical experiments, we uniformly refer to \(\kappa\) as \(\kappa^n\) to examine the corresponding energy dissipation and computational efficiency.

Figure~\ref{fig:kapModE} displays the numerical relaxed energy for different values of \(\kappa\). The results demonstrate that the proposed algorithms achieve relaxed energy dissipation when an appropriate \(\kappa\) is selected. However, computational efficiency is influenced by the magnitude of \(\kappa\), with larger values requiring more iterations to converge. Additionally, Figure~\ref{fig:kapOrigE} illustrates the decay behavior of the original energy for the proposed algorithms. For Algorithms~\ref{Alg:EconcOrd1}--\ref{Alg:EconcOrd2_Adap}, selecting \(\kappa^n = \| V(x) + \beta |\phi^n|^2 \|_\infty\) fails to ensure monotonic decay of the original energy. In contrast, setting \(\kappa^n = \| V(x) + 3\beta |\phi^n|^2 \|_\infty\) ensures consistent decreases in both the relaxed and original energy across all four algorithms, while also achieving higher computational efficiency compared to using a larger fixed \(\kappa\). Based on these findings, we adopt \(\kappa^n = \| V(\x) + 3\beta |\phi^n|^2 \|_\infty\) for subsequent computations.
\begin{figure}[htbp]
	\centering
	\begin{subfigure}{0.265\textwidth}
		\includegraphics[width=\textwidth,height=0.14\textheight]{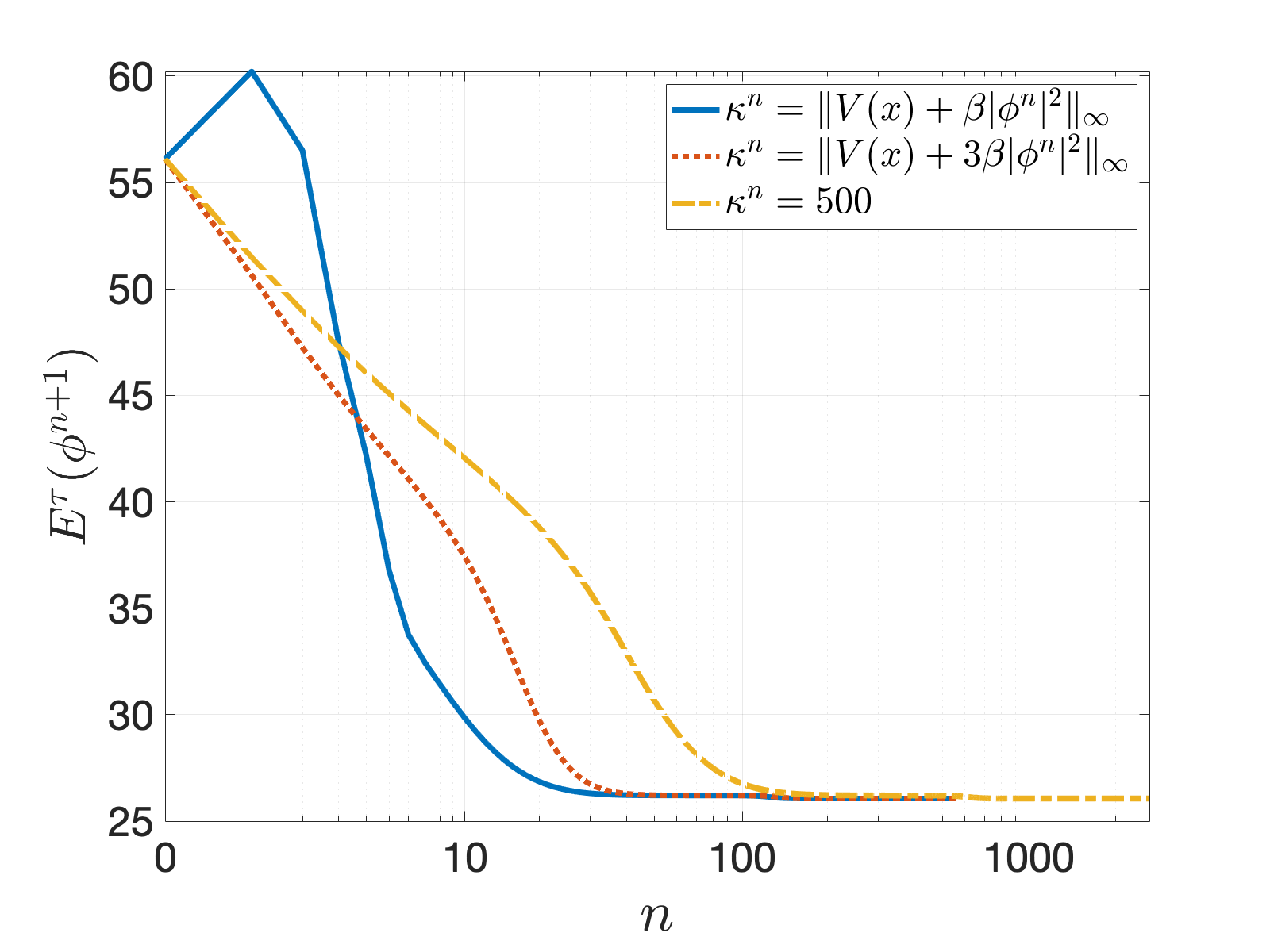}
	\end{subfigure}
    \hspace{-0.5cm} 
	\begin{subfigure}{0.265\textwidth}
		\includegraphics[width=\textwidth,height=0.14\textheight]{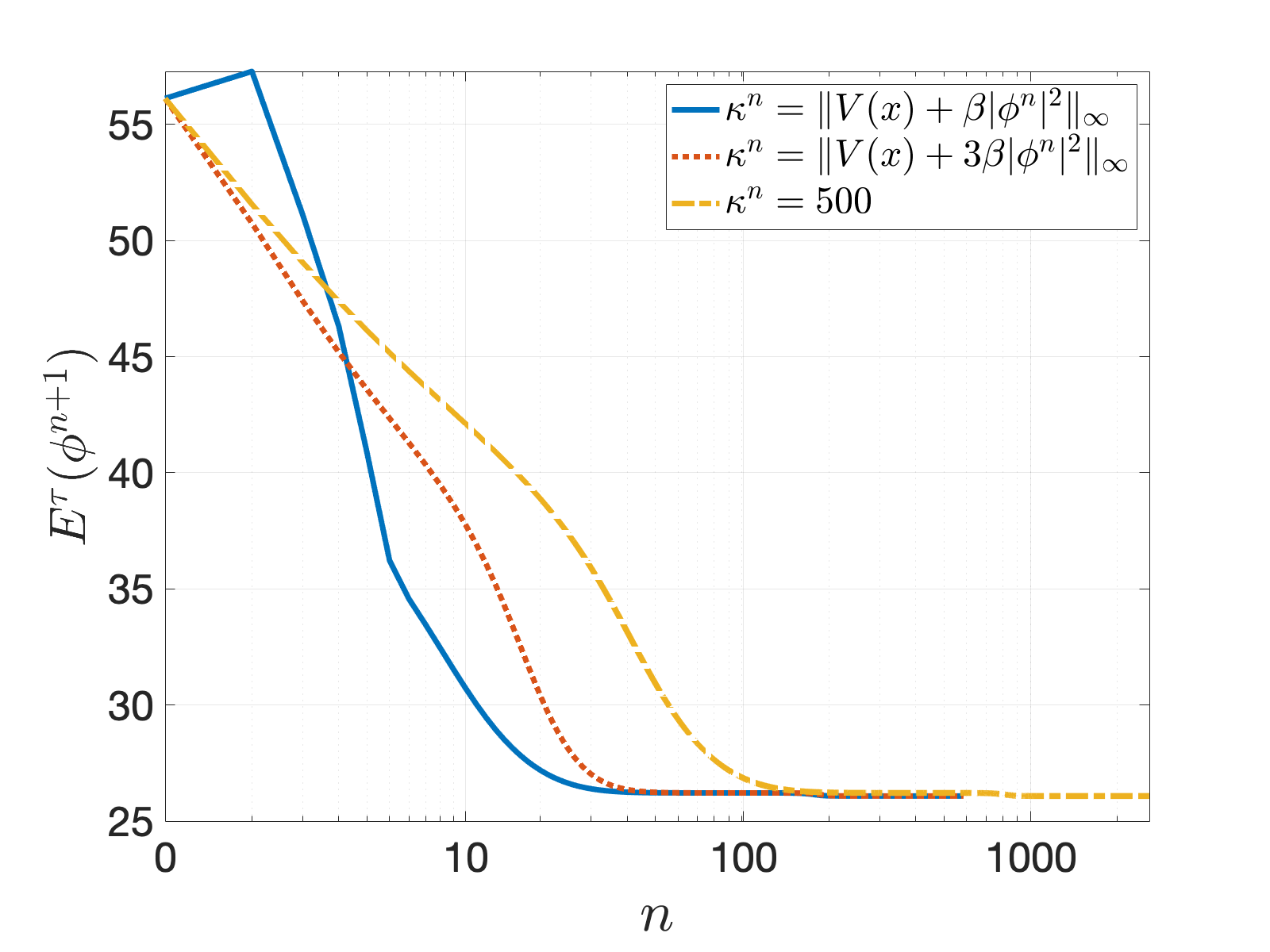}
	\end{subfigure}
    \hspace{-0.5cm} 
	\begin{subfigure}{0.265\textwidth}
		\includegraphics[width=\textwidth,height=0.14\textheight]{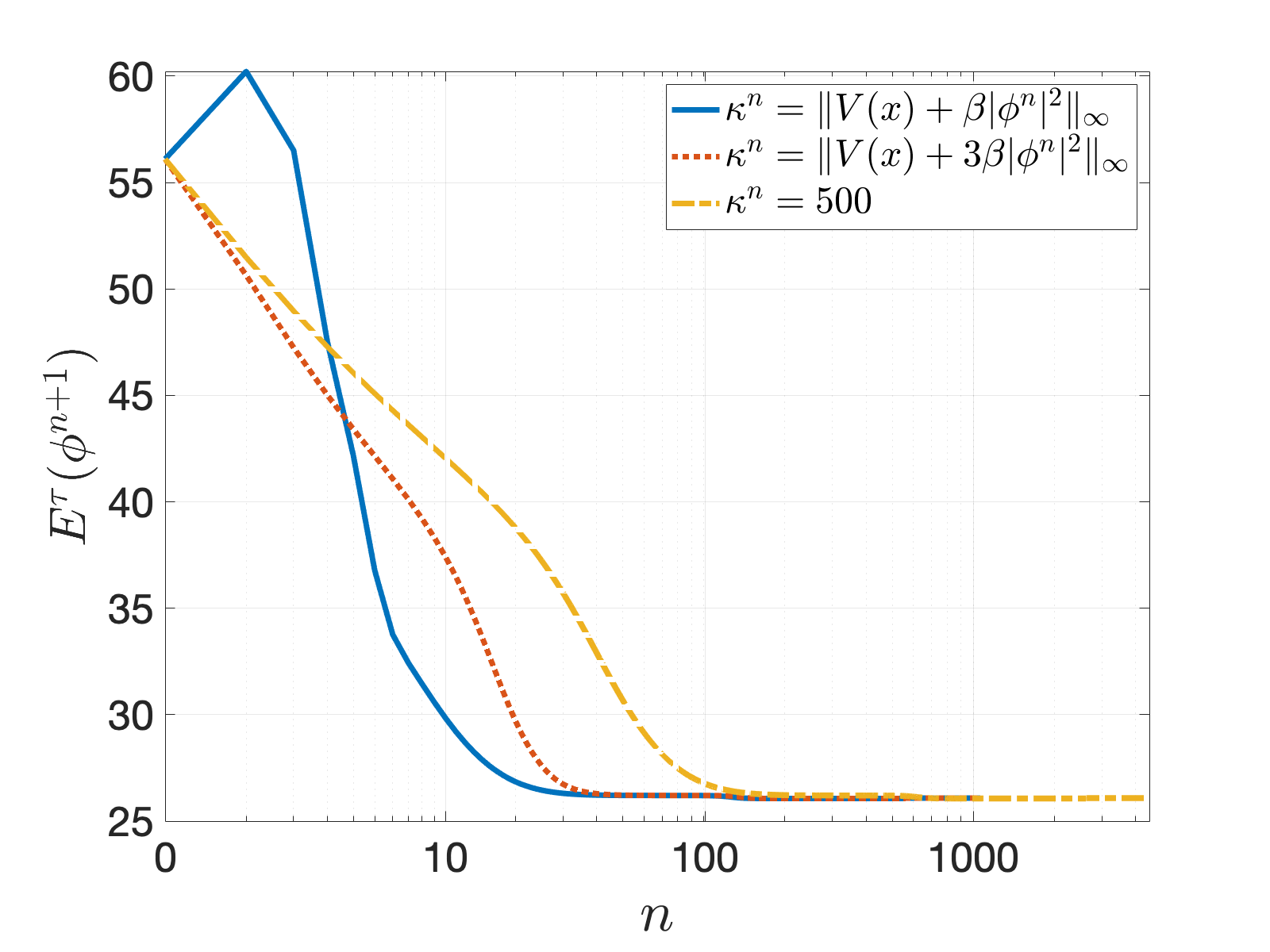}
	\end{subfigure}
    \hspace{-0.5cm} 
	\begin{subfigure}{0.265\textwidth}
		\includegraphics[width=\textwidth,height=0.14\textheight]{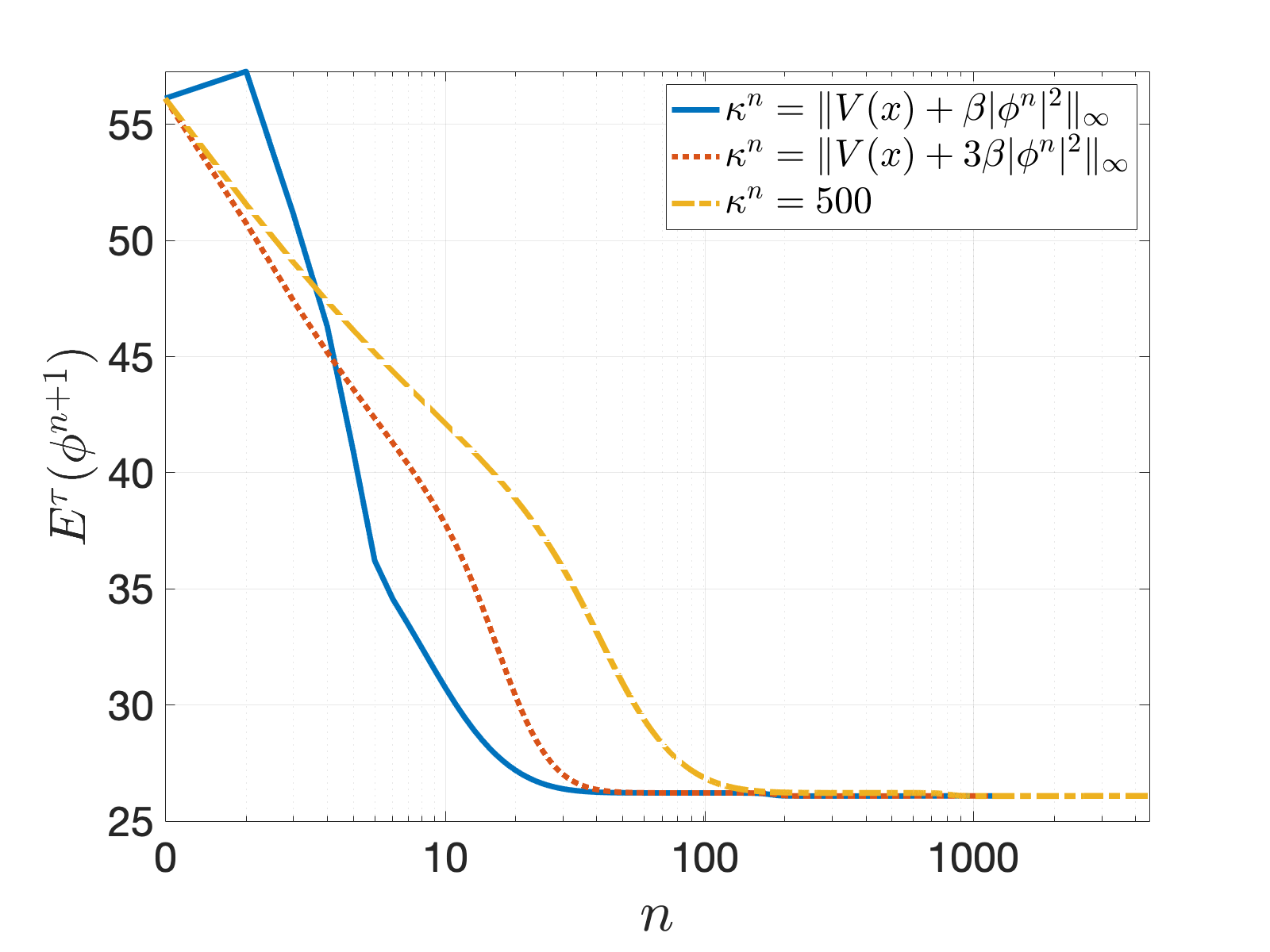}
	\end{subfigure}
    \captionsetup{skip=2pt} 
	\caption{Relaxed energy change per iteration for Example~\ref{Ex_omg01D} with $\tau = 1/8$ $\tau_0=1/8$, $\tau_f=1/80$, $r=10$,   \(h = 1/128\) and \(\tol = 10^{-12}\). From left to right: Algorithm~\ref{Alg:EconcOrd1}, Algorithm~\ref{Alg:EconcOrd2}, Algorithm~\ref{Alg:EconcOrd1_Adap}, and Algorithm~\ref{Alg:EconcOrd2_Adap}.}
	\label{fig:kapModE}
\end{figure}

\begin{figure}[htbp]
	\centering
	\begin{subfigure}{0.265\textwidth}
		\includegraphics[width=\textwidth,height=0.14\textheight]{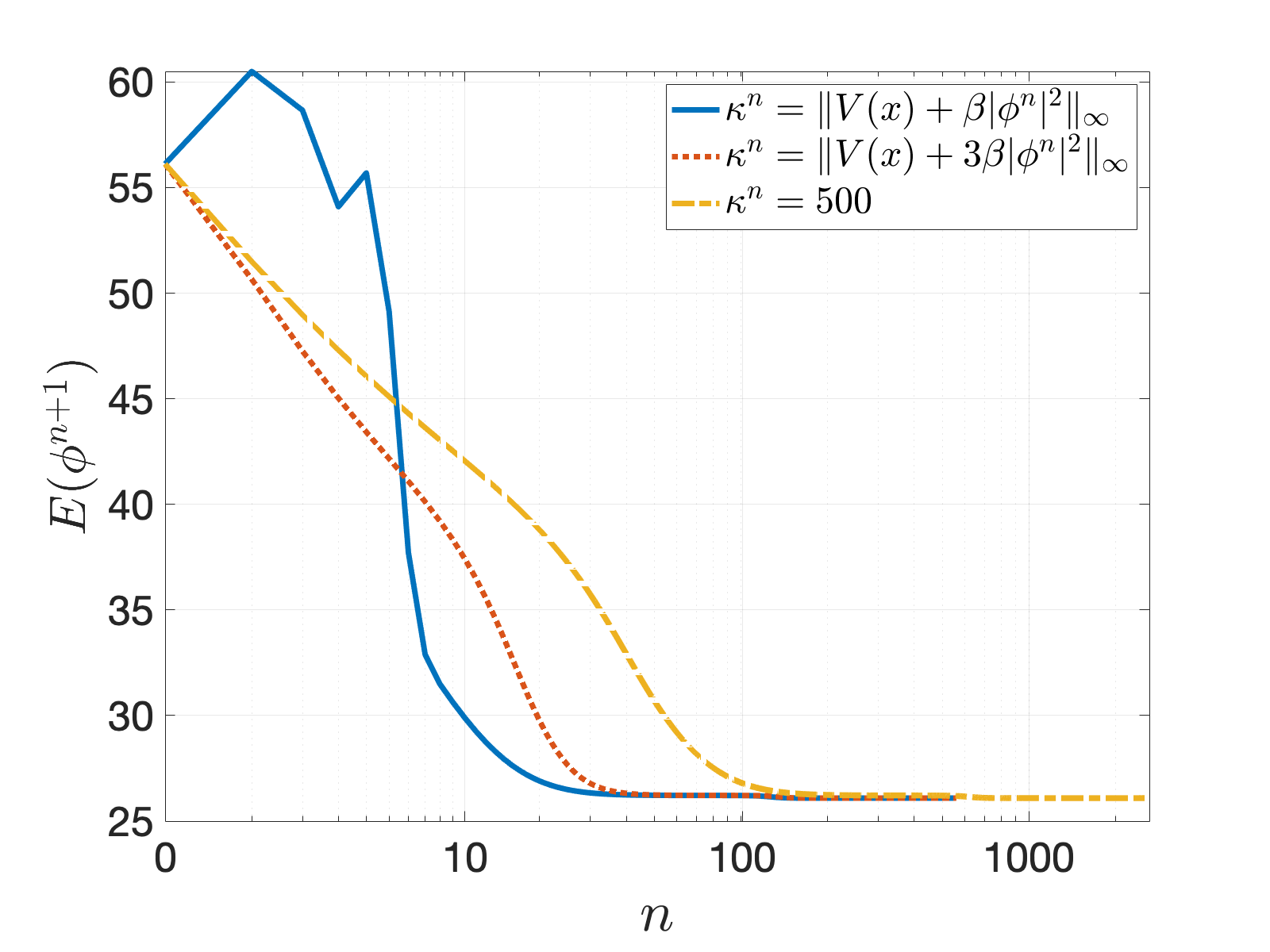}
	\end{subfigure}
 \hspace{-0.5cm} 
	\begin{subfigure}{0.265\textwidth}
		\includegraphics[width=\textwidth,height=0.14\textheight]{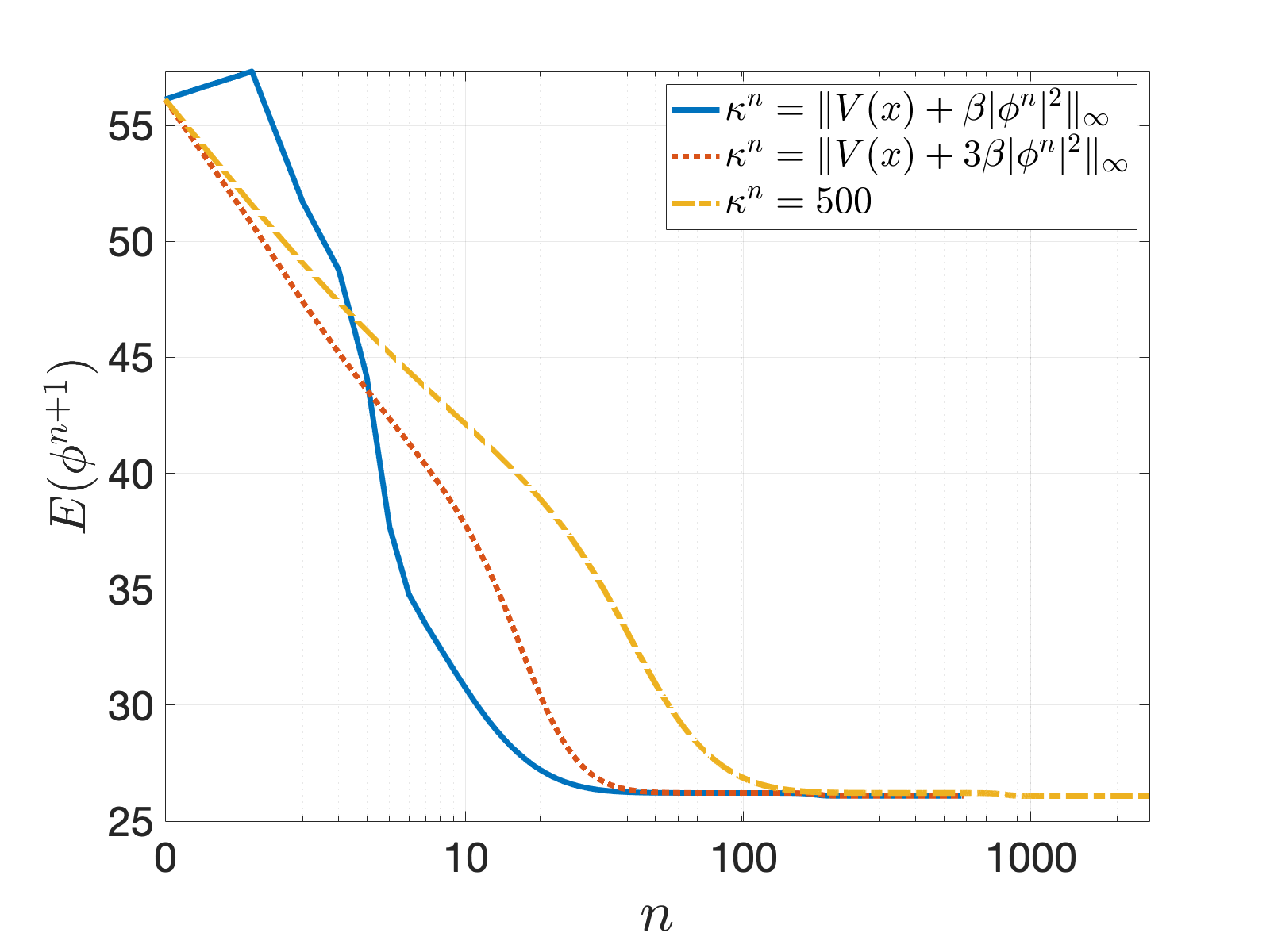}
	\end{subfigure}
 \hspace{-0.5cm} 
	\begin{subfigure}{0.265\textwidth}
		\includegraphics[width=\textwidth,height=0.14\textheight]{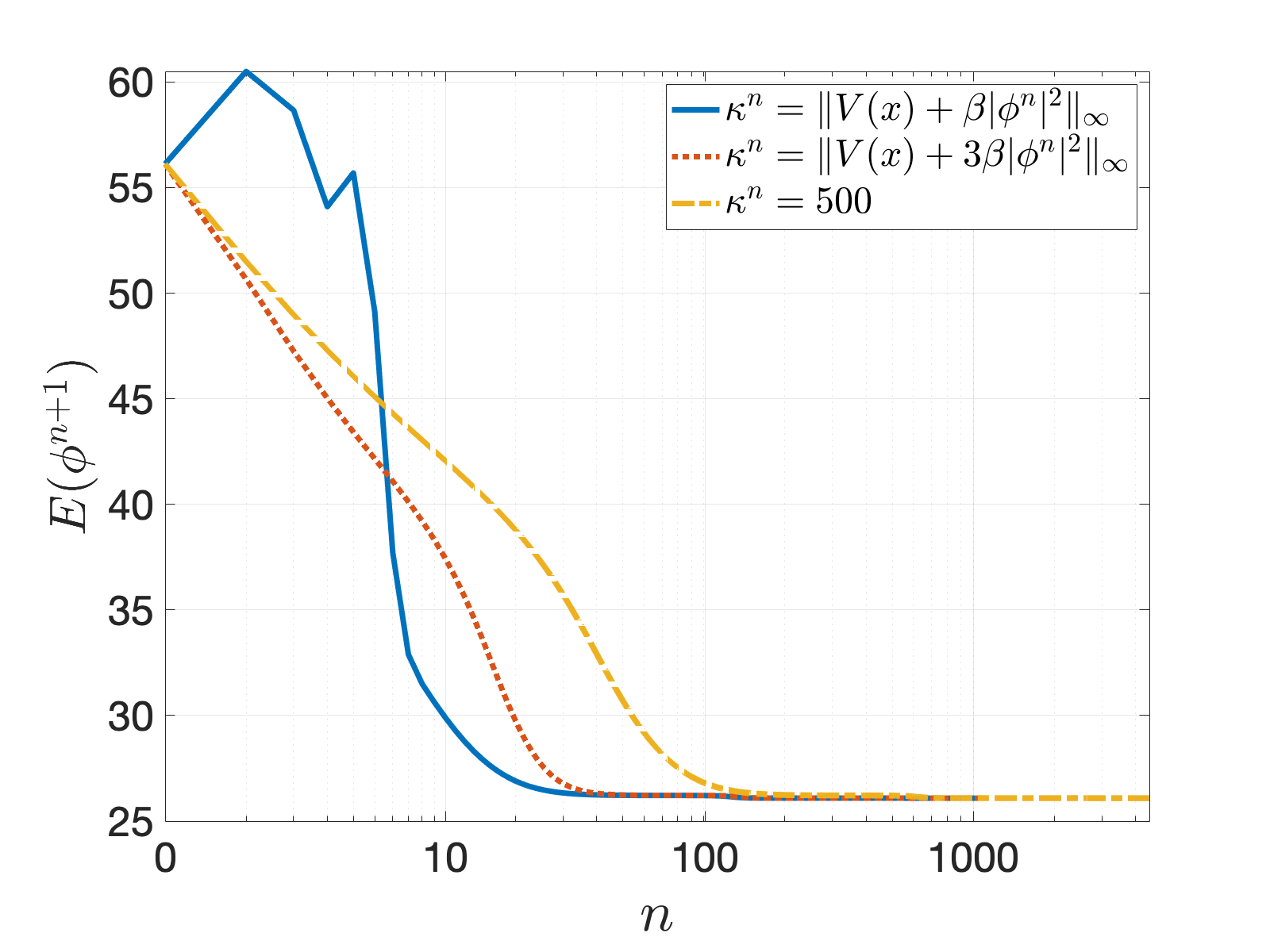}
	\end{subfigure}
 \hspace{-0.5cm} 
	\begin{subfigure}{0.265\textwidth}
		\includegraphics[width=\textwidth,height=0.14\textheight]{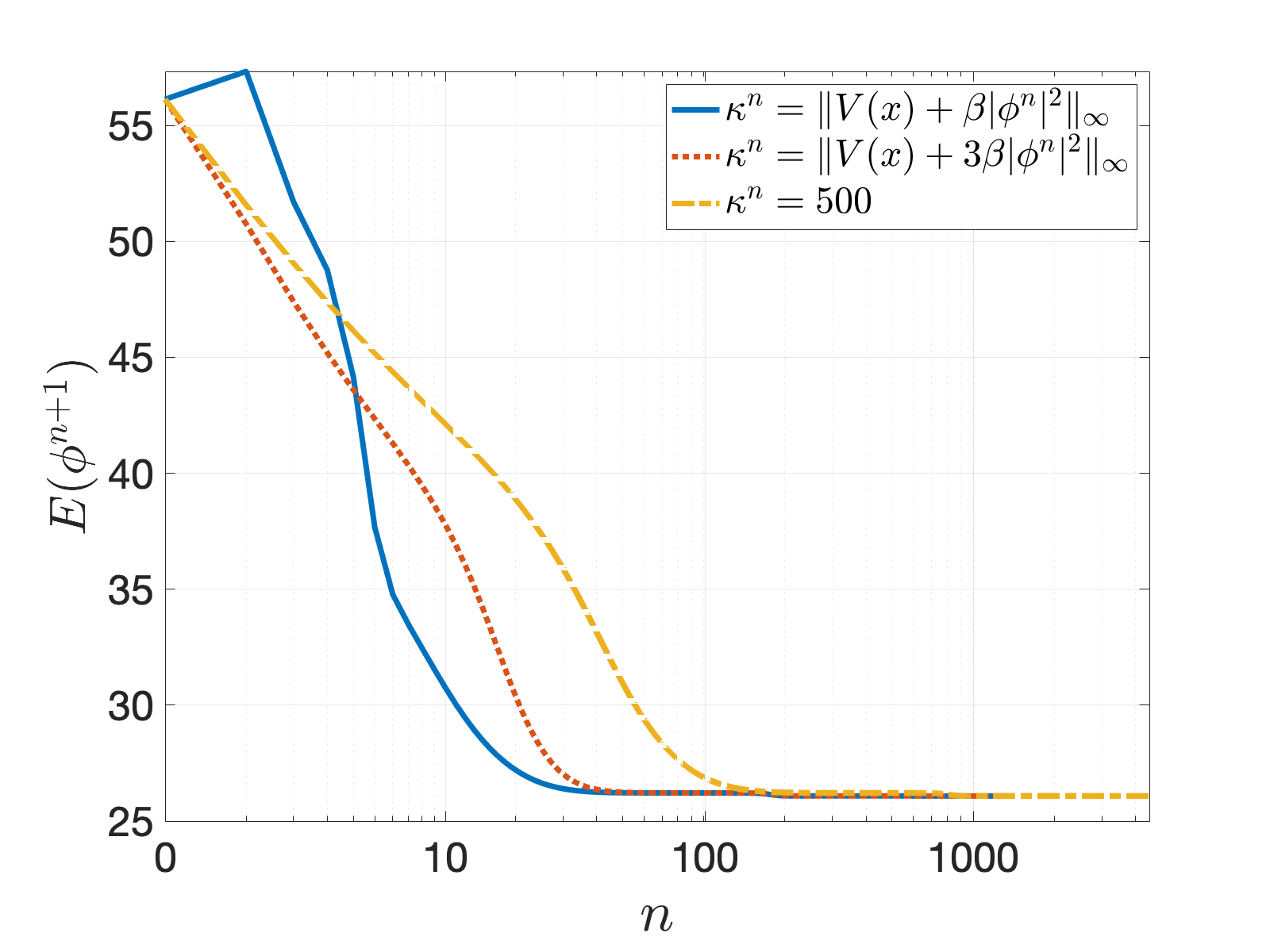}
	\end{subfigure}
    \captionsetup{skip=2pt} 
	\caption{Original  energy change per iteration for Example~\ref{Ex_omg01D} ith $\tau = 1/8$ $\tau_0=1/8$, $\tau_f=1/80$, $r=10$,   \(h = 1/128\) and \(\tol = 10^{-12}\). From left to right: Algorithm~\ref{Alg:EconcOrd1}, Algorithm~\ref{Alg:EconcOrd2}, Algorithm~\ref{Alg:EconcOrd1_Adap}, and Algorithm~\ref{Alg:EconcOrd2_Adap}.}
	\label{fig:kapOrigE}
\end{figure}

\subsubsection{Accuracy test}
To test the  accuracy of the numerical scheme in this paper, we first compute a reference solution $\phi_g^{\text{ref}}$ using the GPELAB toolbox   \cite{AntDub15gpelab} on a very fine mesh with $h=1/128$, $\tau=1/1000$, $\tol=10^{-12}$. We denote $\phi_g$ as the numerical local minimum obtained on coarser meshes. Additionally, we define
\[E_g^{\text{ref}} := E(\phi_g^{\text{ref}}), \quad \mu_g^{\text{ref}} := \mu(\phi_g^{\text{ref}}), \quad E_g := E(\phi_g), \quad \mu_g := \mu(\phi_g).\]

In Tables~\ref{tab:TempConv1DOmg0} and \ref{tab:TempConv1DOmg0_Ord2}, we investigate the convergence  of Algorithms~\ref{Alg:EconcOrd1} and \ref{Alg:EconcOrd2} with respect to \( \tau \).  The results indicate that the numerical solution \( \phi_g \) and the corresponding chemical potential \( \mu(\phi_g) \) obtained by Algorithm~\ref{Alg:EconcOrd1} converge linearly with respect to \( \tau \) to the local minimum and chemical potential of the original problem \eqref{prob_orig}, while the numerical solution \( \phi_g \) and the chemical potential \( \mu(\phi_g) \) derived from Algorithm~\ref{Alg:EconcOrd2} exhibit convergence of \( O(\tau^2) \).
And the convergence rates of the numerical energy for Algorithm~\ref{Alg:EconcOrd1} and Algorithm~\ref{Alg:EconcOrd2} are \( O(\tau^2) \) and \( O(\tau^4) \), respectively.

To test the spatial accuracy of the proposed algorithms, we implement Algorithm~\ref{Alg:EconcOrd1_Adap} and Algorithm~\ref{Alg:EconcOrd2_Adap} with a fixed \( \tau_f \). The results presented in Tables~\ref{tab:SpConv1DOmg0} and~\ref{tab:SpConv1DOmg0_Ord2} demonstrate that the algorithms converge rapidly by adopting the pseudospectral method in space.
\begin{table}[htbp]
	\centering
	\renewcommand{\arraystretch}{1.2} 
	\setlength{\tabcolsep}{8pt} 
	\begin{tabular}{ccccccc}
		\hline
		\noalign{\vskip 2mm} 
		$\tau$ & $\max\left\vert \phi_g - \phi^{\text{ref}}_g \right\vert$ & Rate & $\left\vert E_g - E^{\text{ref}}_g \right\vert$ & Rate & $\left\vert \mu_g - \mu^{\text{ref}}_g \right\vert$ & Rate \\
		\noalign{\vskip 2mm} \hline
		1/20 & 5.20e-03 & -- & 6.17e-04 & -- & 6.30e-03 & -- \\                                                                               
		1/40 & 2.56e-03 & 1.02 & 1.54e-04 & 2.00 & 3.01e-03 & 1.06 \\                                                                               
		1/80 & 1.26e-03 & 1.03 & 3.79e-05 & 2.02 & 1.47e-03 & 1.04 \\                                                                                
		1/160 & 6.21e-04 & 1.02 & 9.38e-06 & 2.02 & 7.24e-04 & 1.02 \\                                                                                
		1/320 & 3.09e-04 & 1.01 & 2.33e-06 & 2.01 & 3.60e-04 & 1.01 \\                          
		\hline    
	\end{tabular}
	\caption{Numerical results  of Algorithm~\ref{Alg:EconcOrd1} with $h=1/128$ and $\tol=10^{-12}$ for  Example~\ref{Ex_omg01D}.}
	\label{tab:TempConv1DOmg0}
\end{table}
\begin{table}[htbp]
	\centering
	\renewcommand{\arraystretch}{1.2} 
	\setlength{\tabcolsep}{8pt} 
	\begin{tabular}{ccccccc}
		\hline
		\noalign{\vskip 2mm} 
		$\tau$ & $\max\left\vert \phi_g - \phi^{\text{ref}}_g \right\vert$ & Rate & $\left\vert E_g - E^{\text{ref}}_g \right\vert$ & Rate & $\left\vert \mu_g - \mu^{\text{ref}}_g \right\vert$ & Rate \\
		\noalign{\vskip 2mm} \hline
		1/20 & 6.72e-04 & -- & 1.14e-05 & -- & 4.60e-04 & --  \\                                                                                  
		1/40 & 2.05e-04 & 1.72 & 1.10e-06 & 3.37 & 1.27e-04 & 1.85 \\                                                                                 
		1/80 & 5.83e-05 & 1.81 & 9.17e-08 & 3.59 & 3.36e-05 & 1.92  \\       
		
		1/160 & 1.58e-05 & 1.88 & 6.82e-09 & 3.75 & 8.63e-06 & 1.96  \\      
		
		1/320 & 4.13e-06 & 1.94 & 4.71e-10 & 3.86 & 2.18e-06 & 1.99  \\                                                                             
		\hline    
	\end{tabular}
	\caption{Numerical results  of Algorithm~\ref{Alg:EconcOrd2} with $h=1/128$ and $\tol=10^{-12}$ for  Example~\ref{Ex_omg01D}.}
	\label{tab:TempConv1DOmg0_Ord2}
\end{table}
\begin{table}[htbp]
	\centering
	\renewcommand{\arraystretch}{1.2} 
	\setlength{\tabcolsep}{8pt} 
	\begin{tabular}{ccccccc}
		\hline
		\noalign{\vskip 2mm} 
		$h$ & $\max\left\vert \phi_g - \phi^{\text{ref}}_g \right\vert$ & Rate & $\left\vert E_g - E^{\text{ref}}_g \right\vert$ & Rate & $\left\vert \mu_g - \mu^{\text{ref}}_g \right\vert$ & Rate \\
		\noalign{\vskip 2mm} \hline
		1 & 7.93e-03 & -- & 4.44e-04 & -- & 9.77e-02 & --\\                     
		1/2 & 1.20e-03 & 2.72 & 1.95e-04 & 1.19 & 4.11e-03 & 4.57 \\
		1/4 & 2.20e-06 & 9.10 & 4.98e-08 & 11.93 & 5.03e-07 & 13.00 \\                   
		1/8 & 2.11e-08 & 6.71 & 7.11e-15 & 22.74 & 5.83e-08 & 3.11 \\                       
		\hline    
	\end{tabular}
	\caption{Spatial resolution of Algorithm~\ref{Alg:EconcOrd1_Adap} with $\tau_0=10^{-1}$, $\tau_f=10^{-6}$, $r=10$ and $\tol=10^{-12}$  for Example~\ref{Ex_omg01D}.}\label{tab:SpConv1DOmg0}
\end{table}
\begin{table}[htbp]
	\centering
	\renewcommand{\arraystretch}{1.2} 
	\setlength{\tabcolsep}{8pt} 
	\begin{tabular}{ccccccc}
		\hline
		\noalign{\vskip 2mm} 
		$h$ & $\max\left\vert \phi_g - \phi^{\text{ref}}_g \right\vert$ & Rate & $\left\vert E_g - E^{\text{ref}}_g \right\vert$ & Rate & $\left\vert \mu_g - \mu^{\text{ref}}_g \right\vert$ & Rate \\
		\noalign{\vskip 2mm} \hline
		1 & 7.93e-03 & -- & 4.44e-04 & -- & 9.77e-02 & --\\                     
		1/2 & 1.20e-03 & 2.72 & 1.95e-04 & 1.19 & 4.11e-03 & 4.57  \\
		1/4 & 2.20e-06 & 9.10 & 4.98e-08 & 11.93 & 5.76e-07 & 12.80 \\           1/8 & 2.47e-08 & 6.47 & 7.11e-15 & 22.74 & 1.56e-08 & 5.21\\                   
		\hline    
	\end{tabular}
	\caption{Spatial resolution of Algorithm~\ref{Alg:EconcOrd2_Adap} with $\tau_0=10^{-1}$, $\tau_f=10^{-6}$, $r=10$ and $\tol=10^{-12}$  for  Example~\ref{Ex_omg01D}.}\label{tab:SpConv1DOmg0_Ord2}
\end{table}
\subsection{Numerical results in 2D}
\begin{example}\label{Ex:2Domg0}\cite[Example 4.2]{YinHuaZha}
	In this example, we consider the approximation of the ground state \eqref{eng_concOrd1} with periodic boundary conditions on  a bounded domain $[-8,8]^2$ and choose
	$V(\x)=\frac{1}{2}|\x|^2,\quad \beta=300.$
    The initial condition is 
$\phi(\x)=\frac{\e^{-V(\x)}}{\Vert\e^{-V(\x)}\Vert_2}.$
	In the implementation,  the iteration stops when
	\[\Vert \phi^{n+1}-\phi^{n}\Vert_\infty/\tau\le 10^{-7}.\]
\end{example}
In this example, we primarily aim to demonstrate the efficiency of the proposed algorithms and to verify the relationship between the original energy functional and the relaxed energy functionals \eqref{eng_concOrd1} and \eqref{eng_concOrd2}.

First, we plot the evolution of the approximated energy of \eqref{eng_orig} in Figure~\ref{fig:2dOmg0EevoComp}. From this figure, we observe that the proposed algorithms converge to the neighborhood of the ground state within a few iterations. Next, we compare the difference between the original numerical energy  and the relaxed numerical energy   in the last column of Table~\ref{table:2dOmg0_Eff}, verifying that 
\[
|E(\phi) - E^{1,\tau}(\phi)| = O(\tau), \quad |E(\phi) - E^{2,\tau}(\phi)| = O(\tau^2).
\]
Additionally, the CPU times for executing Algorithms~\ref{Alg:EconcOrd1}--\ref{Alg:EconcOrd2_Adap} are shown in the third column of Table~\ref{table:2dOmg0_Eff}. These results highlight several key points:
\begin{itemize}
	\item Algorithms~\ref{Alg:EconcOrd1} and~\ref{Alg:EconcOrd2} converge to the numerical ground state in a shorter time than Algorithms~\ref{Alg:EconcOrd1_Adap} and~\ref{Alg:EconcOrd2_Adap}.
	\item The CPU time required by Algorithms~\ref{Alg:EconcOrd1_Adap} and~\ref{Alg:EconcOrd2_Adap} increases more gradually with smaller values of \( \tau \) compared to Algorithms~\ref{Alg:EconcOrd1} and~\ref{Alg:EconcOrd2}. As \( \tau \) becomes very small, these adaptive algorithms will be more efficient.
	\item Algorithms~\ref{Alg:EconcOrd2} and~\ref{Alg:EconcOrd2_Adap} achieve higher accuracy than Algorithms~\ref{Alg:EconcOrd1} and~\ref{Alg:EconcOrd1_Adap},  which aligns with the theoretical analysis.
\end{itemize}
\begin{figure}[htbp]
\centering
\includegraphics[
    width=.45\textwidth,
    clip,
    trim = 2cm 0.5cm 0cm 1cm
]{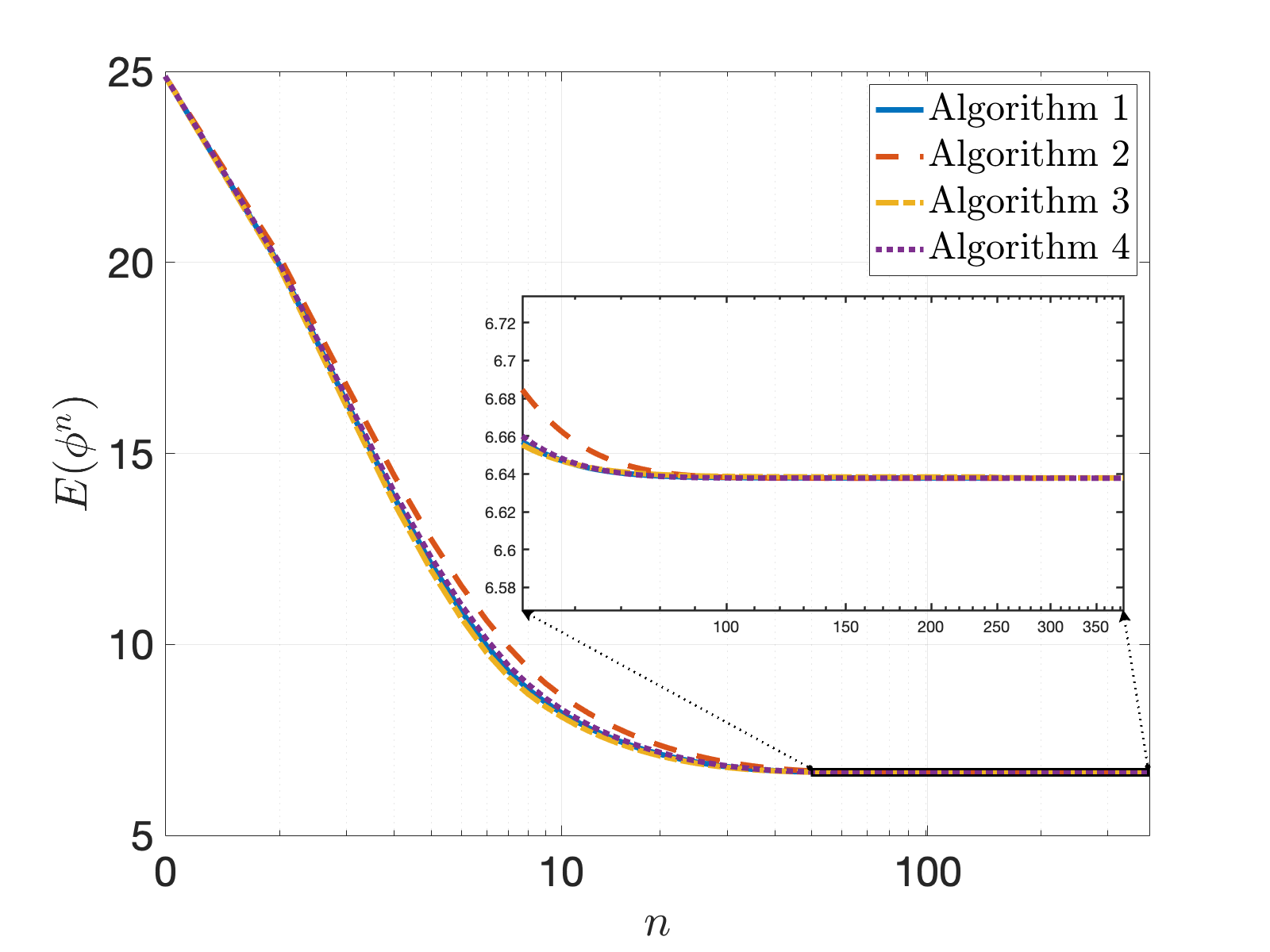}
\captionsetup{skip=2pt} 
\caption{Evolution of the approximated energy of different algorithms with $\tau=1/16$, $r=2$ for Example~\ref{Ex:2Domg0}.}
\label{fig:2dOmg0EevoComp}
\end{figure}
\begin{table}[htbp]
	\centering
	\begin{tabular}{cccccc} 
		\hline
		Algorithm & $\tau$ & CPU(s) & $\Vert \phi^{\text{ref}}_g-\phi_g\Vert_\infty$ &  $E(\phi_g)-E^\tau(\phi_g)$  \\
		\hline
		\multirow{5}{*}{Algorithm~\ref{Alg:EconcOrd1}} 
		& $1/64$  & 0.19 & 2.87e-04 & 7.24e-04 \\
		& $1/128$ & 0.17 & 1.43e-04 & 3.60e-04 \\
		& $1/256$ & 0.28 & 7.13e-05 & 1.80e-04 \\
		& $1/512$ & 0.48 & 3.56e-05 & 8.98e-05 \\
		& $1/1024$ & 0.88 & 1.78e-05 & 4.49e-05 \\
		\hline
		\multirow{5}{*}{Algorithm~\ref{Alg:EconcOrd2}} 
		& $1/64$  & 0.39 & 6.82e-06 & 6.05e-06 \\
		& $1/128$ & 0.59 & 1.81e-06 & 1.54e-06 \\
		& $1/256$ & 1.10 & 4.95e-07 & 3.90e-07 \\
		& $1/512$ & 2.13 & 1.55e-07 & 9.79e-08 \\
		& $1/1024$ & 5.11 & 6.91e-08 & 2.46e-08 \\
		\hline
		\multirow{5}{*}{Algorithm~\ref{Alg:EconcOrd1_Adap}} 
		& $1/64$  & 0.19 & 2.87e-04 & 7.24e-04 \\
		& $1/128$ & 0.26 & 1.43e-04 & 3.60e-04 \\
		& $1/256$ & 0.38 & 7.13e-05 & 1.80e-04 \\
		& $1/512$ & 0.59 & 3.56e-05 & 8.98e-05 \\
		& $1/1024$ & 0.98 & 1.78e-05 & 4.49e-05 \\
		\hline
		\multirow{5}{*}{Algorithm~\ref{Alg:EconcOrd2_Adap}} 
		& $1/64$  & 0.37 & 6.82e-06 & 6.05e-06 \\
		& $1/128$ & 0.52 & 1.81e-06 & 1.54e-06 \\
		& $1/256$ & 0.70 & 4.94e-07 & 3.90e-07 \\
		& $1/512$ & 0.94 & 1.52e-07 & 9.79e-08 \\
		& $1/1024$ & 1.30 & 6.27e-08 & 2.46e-08 \\
		\hline
	\end{tabular}
	\caption{Comparison of different algorithms for Example~\ref{Ex:2Domg0} on a $128 \times 128$ discretized mesh with $r = 2$ and $\tol = 10^{-7}$.}
	\label{table:2dOmg0_Eff}
\end{table}

The algorithms proposed in this paper are robust even with larger $\tau$. The resulting outputs can serve as initial conditions for quasi-Newton methods, such as the Riemannian limited memory Broyden--Fletcher--Goldfarb--Shanno (RLBFGS) solver, which enables rapid convergence to the exact local minimum.
\section{Rotating Bose--Einstein condensate}\label{sec:rot_BECs}
In this section, the framework from Section~\ref{sec:relaxed_functionals} is extended to compute the ground state of the rotating BEC. The corresponding energy functional includes a rotational term and is given by
\begin{equation}\label{eng_origrot}
	E_{rot}(\phi) = \int_{\Dc} \left(\frac{1}{2} |\nabla \phi|^2 + V(\x) |\phi|^2 + \frac{\beta}{2} |\phi|^4 - \Omega \overline{\phi} L_z \phi \right)\, d\x,   
\end{equation}
where \( L_z \phi = -i (x \partial_y - y \partial_x) \phi \) represents the \( z \)-component of the angular momentum \( \x \times (-i \nabla \phi) \). Define 
$
R(\x) := \Omega (y, -x).$
And let 
$
\nabla_R \phi = \nabla \phi + i R^\top \phi,$
with the associated second-order operator given by
\[
\nabla_R^2 \phi = \nabla_R \cdot (\nabla_R \phi) = \Delta \phi + 2i R \cdot \nabla \phi - |R|^2 \phi.
\]
For \( \phi \) satisfying periodic boundary conditions on \(\partial \Dc\), the energy functional can be reformulated as
\begin{equation}
	E_{rot}(\phi) = \int_{\Dc} \left( -\frac{1}{2} (\nabla_R^2 \phi) \overline{\phi} + W(\x) |\phi|^2 + \frac{\beta}{2} |\phi|^4 \right) \, d\x,  
\end{equation}
where 
\begin{equation} \label{Wx}
	W(\x) = V(\x) - \frac{|R(\x)|^2}{2}.
\end{equation}

\subsection{Numerical scheme}
In order to  compute the local minima of the rotating BEC,  we build on the methodology introduced in Section~\ref{sec:relaxed_functionals}, applying it to the relaxed energy functional that incorporates rotational effects. The primary tool for this solution involves the expansion
\[
e^{\tau \nabla_R^2} = \sum_{k=0}^{\infty} \frac{(\tau \nabla_R^2)^k}{k!}.
\]
This leads to
\[
\langle e^{\tau \nabla_R^2} \phi, \phi \rangle = \langle \phi, \phi \rangle + \langle \tau \nabla_R^2 \phi, \phi \rangle + \left\langle \sum_{k=2}^{\infty} \frac{(\tau \nabla_R^2)^k}{k!} \phi, \phi \right\rangle.
\]
The methodology in Section~\ref{sec:relaxed_functionals} extends naturally to the rotating BEC problem using
\[
\langle e^{c \tau \Delta} \phi, \phi \rangle = \langle \phi, \phi \rangle + \langle c \tau \Delta \phi, \phi \rangle + \langle \frac{c^2 \tau^2}{2} \nabla_R^4 \phi, \phi \rangle + \left\langle \sum_{k=3}^{\infty} \frac{(c \tau \Delta)^k}{k!} \phi, \phi \right\rangle,
\]
and
\[
\langle e^{c \tau \nabla_R^2} \phi, \phi \rangle = \langle \phi, \phi \rangle + \langle c \tau \nabla_R^2 \phi, \phi \rangle + \langle \frac{c^2 \tau^2}{2} \nabla_R^4 \phi, \phi \rangle + \left\langle \sum_{k=3}^{\infty} \frac{(c \tau \nabla_R^2)^k}{k!} \phi, \phi \right\rangle.
\]
The relaxed energy functionals are defined as
\begin{equation} \label{eng_rotconcOrd1}
	E_{rot}^{1,\tau}(\phi) = \frac{1}{2\tau} + \int_{\Dc} \left( -\frac{1}{2\tau} \left| e^{\frac{\tau}{2} \nabla_R^2} \phi \right|^2 + W(\x) |\phi|^2 + \frac{\beta}{2} |\phi|^4 - \kappa |\phi|^2 \right) d\x + \kappa,
\end{equation}
and
\begin{equation} \label{eng_rotconcOrd2}
	E_{rot}^{2,\tau}(\phi) = \frac{3}{2\tau} + \int_{\Dc} \frac{1}{2\tau} \left( |e^{\frac{\tau}{2} \nabla_R^2} \phi|^2 - 4 |e^{\frac{\tau}{4} \nabla_R^2} \phi|^2 \right) + W(\x) |\phi|^2 + \frac{\beta}{2} |\phi|^4 - \kappa |\phi|^2 d\x + \kappa,
\end{equation}
subject to \( \Vert \phi \Vert_2 = 1. \) Algorithms in Section~\ref{sec:relaxed_functionals} are extended to compute the constrained local minima of \eqref{eng_rotconcOrd1} and \eqref{eng_rotconcOrd2} using the iterations
\[
\phi^{n+1} = \frac{\frac{1}{\tau} e^{\tau \nabla_R^2} \phi^n - 2\phi^n \left( V(\x) + \beta |\phi^n|^2 - \kappa \right)}{\left\Vert \frac{1}{\tau} e^{\tau \nabla_R^2} \phi^n - 2\phi^n \left( V(\x) + \beta |\phi^n|^2 - \kappa \right) \right\Vert_2},
\]
for the first-order algorithm, and
\[
\phi^{n+1} = \frac{\frac{1}{\tau} \left( -e^{\tau \nabla_R^2} \phi^n + 4 e^{\frac{\tau}{2} \nabla_R^2} \phi^n \right) - 2\phi^n \left( V(\x) + \beta |\phi^n|^2 - \kappa \right)}{\left\Vert \frac{1}{\tau} \left( -e^{\tau \nabla_R^2} \phi^n + 4 e^{\frac{\tau}{2} \nabla_R^2} \phi^n \right) - 2\phi^n \left( V(\x) + \beta |\phi^n|^2 - \kappa \right) \right\Vert_2},
\]
for the second-order algorithm.
\subsection{Numerical results}
To evaluate the effectiveness of the proposed numerical scheme, several tests were performed to examine the performance of the algorithm, which extends those presented in Section~\ref{sec:relaxed_functionals}, in solving the energy functional for the rotating BEC. In the numerical implementation, \( e^{\tau \nabla_R^2} \phi^n \) is computed using the algorithm from \cite{BerCrLi}. The parameter \( \kappa\) is updated at each iteration and is determined by
\[
\kappa = \| V(\x) + 3\beta |\phi^n|^2 \|_\infty.
\]
The algorithm settings and notation used in the figures in Section~\ref{sec:numerical_results} are retained in the numerical implementation for computing the ground states of rotating BECs.
\begin{example}\cite[Pages 18-22]{BerCrLi}\label{Ex:rotAnis}
	The ground state approximation of \eqref{eng_rotconcOrd1} is computed with periodic boundary conditions on \( [-12,12]^2 \). Parameters are chosen as
	\[
	\gamma_x = 1.05, \quad \gamma_y = 0.95, \quad \beta = 1000, \quad \kappa = \left\| V(\x) + 3\beta |\phi^n|^2 \right\|_\infty.
	\]
	The Fourier pseudo-spectral method is applied for spatial discretization with a step size \( h = 24/128 \). The initial condition is 
	\[
	\phi^0(\x) = \frac{(1-\Omega)\phi_a + \Omega\phi_b}{\Vert (1-\Omega)\phi_a + \Omega\phi_b \Vert},
	\]
	where
	\[
	\phi_a = \frac{(\gamma_x \gamma_y)^{1/4}}{\sqrt{\pi}} e^{-V(\x)}, \quad \phi_b = \frac{\gamma_x - \gamma_y i y}{\sqrt{\pi}} e^{-V(\x)}.
	\]
	The maximum number of iterations is set to \( N_{\max} = 80000 \), and the iteration stops when
	\[
	\Vert \phi^{n+1} - \phi^n \Vert_\infty / \tau \leq \tol.
	\]
\end{example}

Figures~\ref{fig:2DOmg05} and \ref{fig:2DOmg09} illustrate the values of \( |\phi|^2 \) obtained by the proposed algorithms for different values of \( \Omega \), with the corresponding energy variations depicted in Figures~\ref{fig:2drotEomg05} and \ref{fig:2drotEomg09}. During the iterative process of solving the ground state problem of the rotating BEC, it is observed that the original energy \( E(\phi^n) \) decreases monotonically, further confirming the reliability of the algorithms presented in this work.

Starting from the result produced by Algorithm~\ref{Alg:EconcOrd2_Adap} in Figure~\ref{fig:2DOmg05}, the RLBFGS algorithm is employed to refine the solution. The iteration proceeds until the \( L^2 \)-norm of the Riemannian gradient of \eqref{eng_origrot} drops below \( 10^{-6} \). As shown in Figure~\ref{fig:Omg05rlbfgs}, the left panel depicts the energy decay during the optimization, while the right panel displays the final state of \( |\phi|^2 \). From this figure, it is clear that the results in Figure~\ref{fig:2DOmg05} closely approximate the true ground state, demonstrating the effectiveness of the proposed algorithms as initial conditions for Newton's method. This further substantiates the reliability of the proposed approach.

\begin{figure}[htbp]
	\centering
	\begin{subfigure}{0.24\textwidth}
		\includegraphics[height=\textwidth,width=\textwidth, clip, trim = 2.5cm 3cm 1cm 1cm]{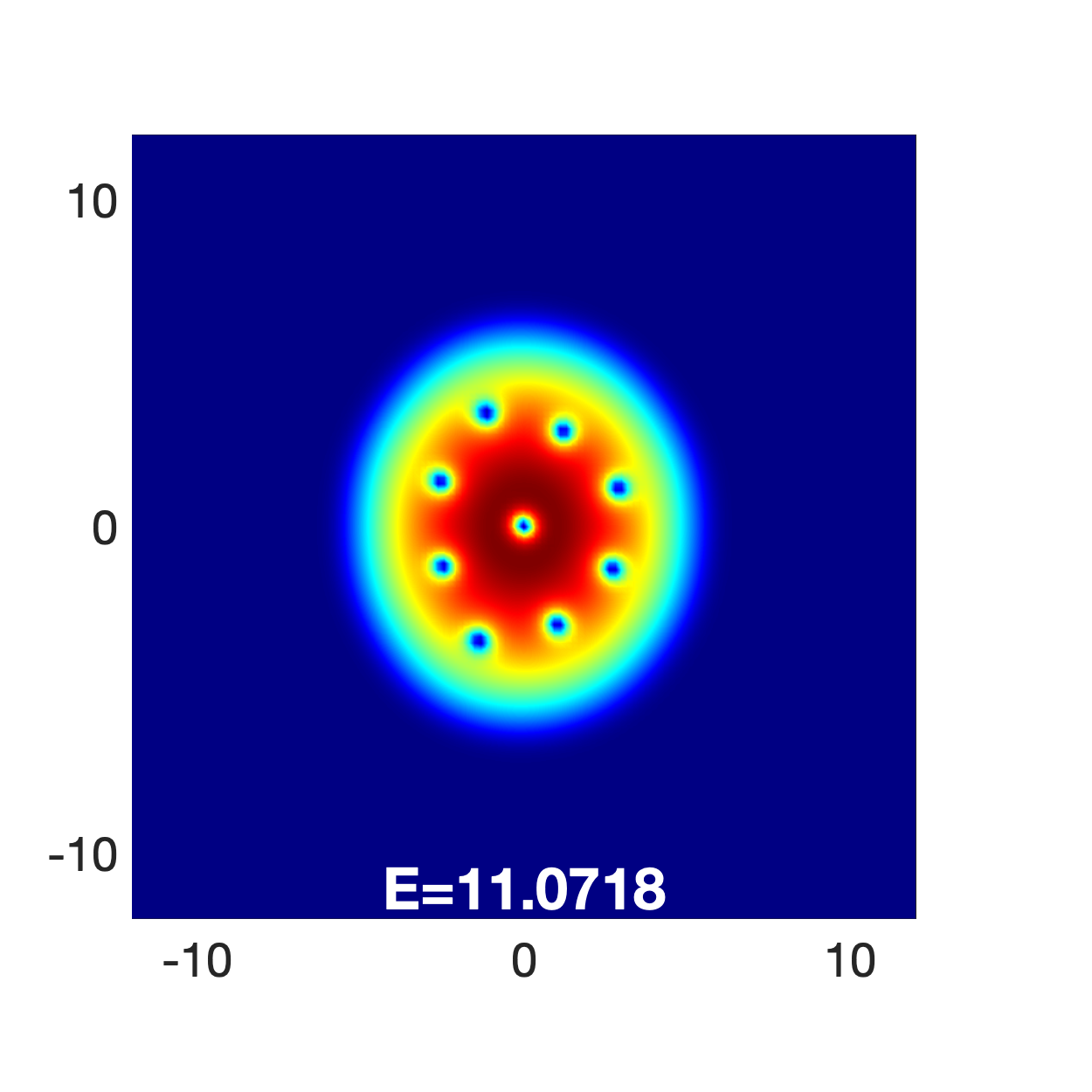}
	\end{subfigure}
	\hspace{-0.5cm} 
	\begin{subfigure}{0.24\textwidth}
		\includegraphics[height=\textwidth,width=\textwidth, clip, trim = 2.5cm 3cm 1cm 1cm]{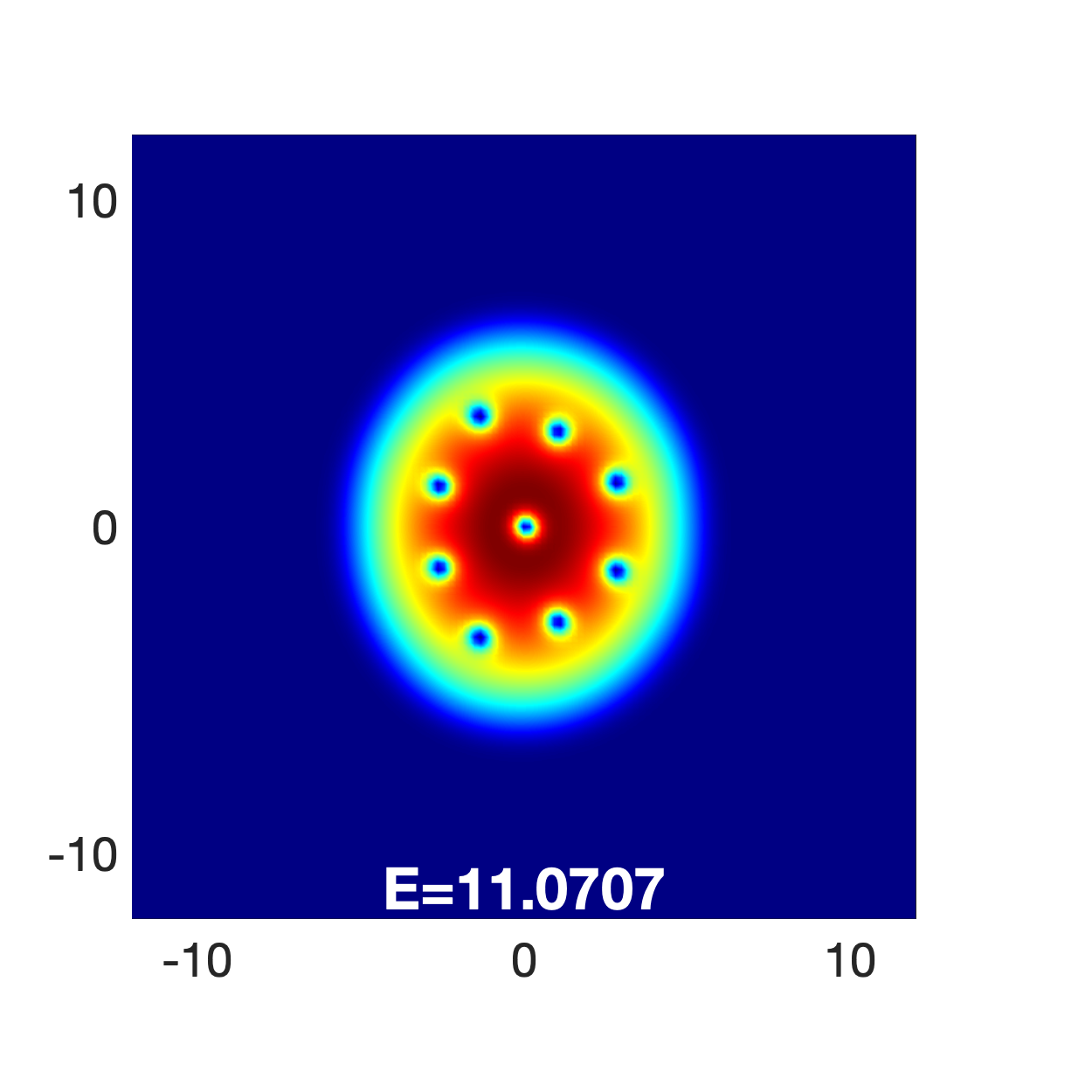} 
	\end{subfigure}
	\hspace{-0.5cm} 
	\begin{subfigure}{0.24\textwidth}
		\includegraphics[height=\textwidth,width=\textwidth, clip, trim = 2.5cm 3cm 1cm 1cm]{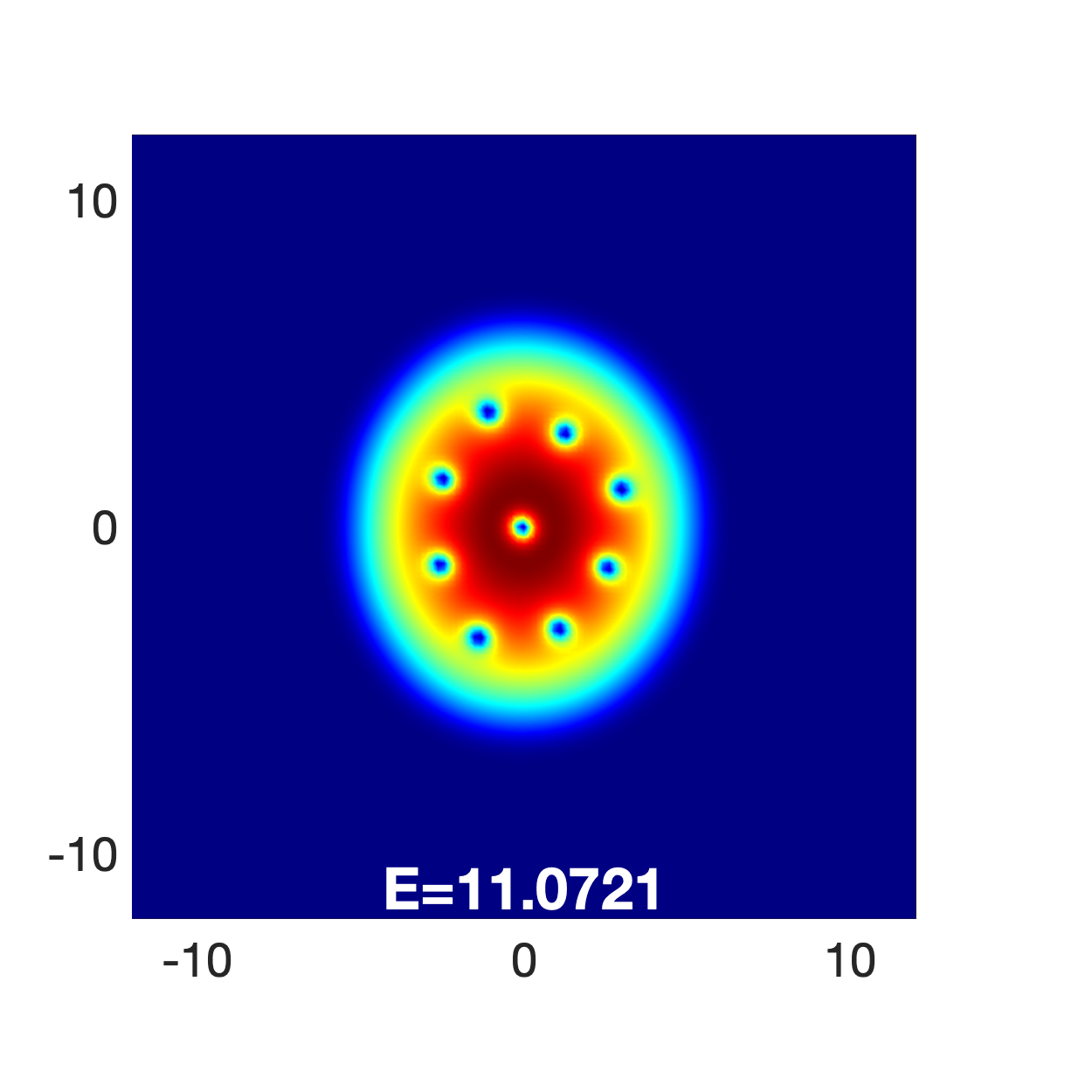} 
	\end{subfigure}
	\hspace{-0.5cm} 
	\begin{subfigure}{0.24\textwidth}
		\includegraphics[height=\textwidth,width=\textwidth, clip, trim = 2.5cm 3cm 0cm  1cm]{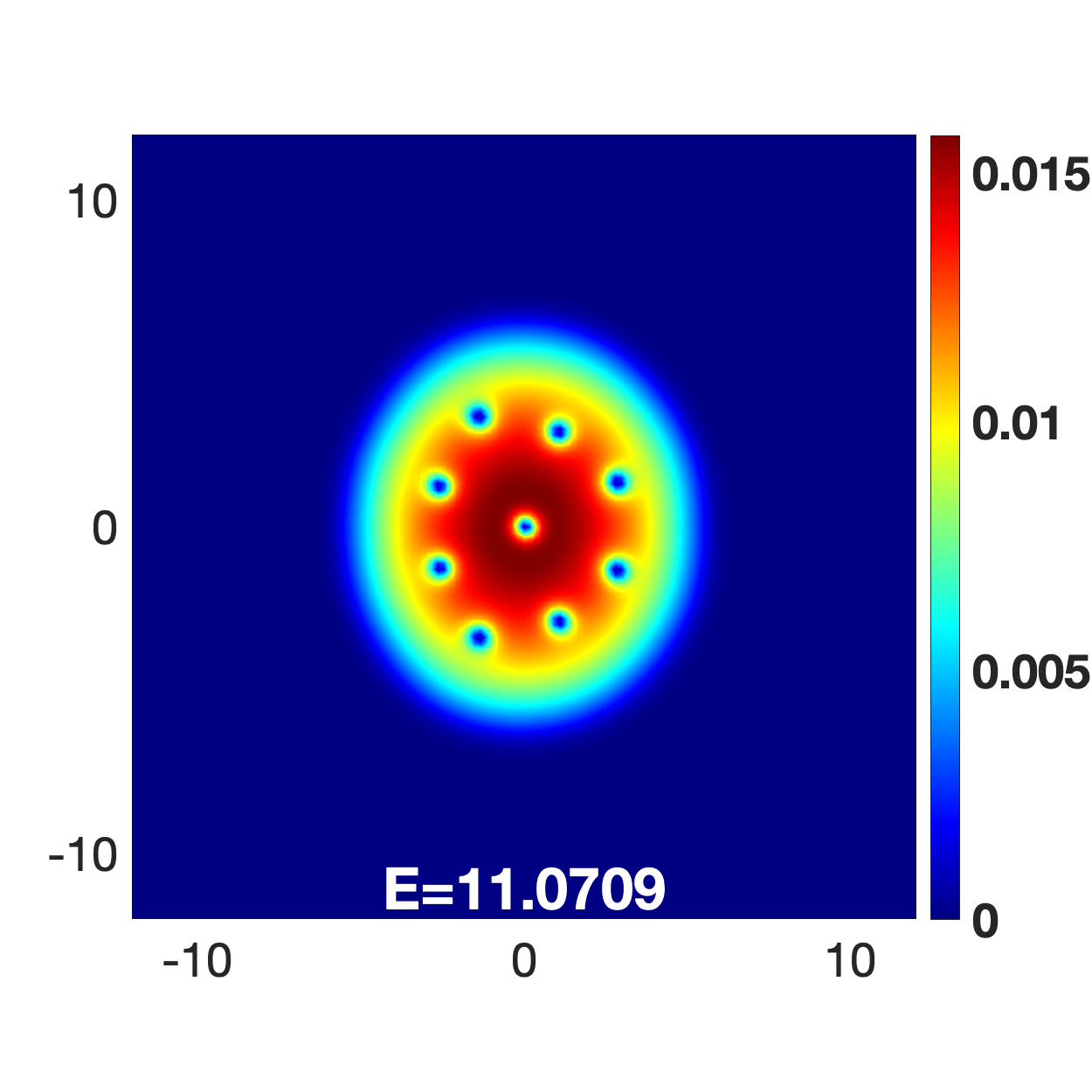}
	\end{subfigure}
    \captionsetup{skip=2pt} 
	\caption{$|\phi^{n+1}|^2$ for Example~\ref{Ex:rotAnis} with $\Omega=0.5$, $\tau_0=1/64$, $\tau_f=1/128$, $r=2$, $\tol= 2\times10^{-3}$. From left to right: Algorithm~\ref{Alg:EconcOrd1}, Algorithm~\ref{Alg:EconcOrd2}, Algorithm~\ref{Alg:EconcOrd1_Adap}, and Algorithm~\ref{Alg:EconcOrd2_Adap}.}\label{fig:2DOmg05}
\end{figure}
\begin{figure}[htbp]
	\centering
	\begin{subfigure}{0.24\textwidth}
		\includegraphics[height=\textwidth,width=\textwidth, clip, trim = 2.5cm 3cm 1cm 1cm]{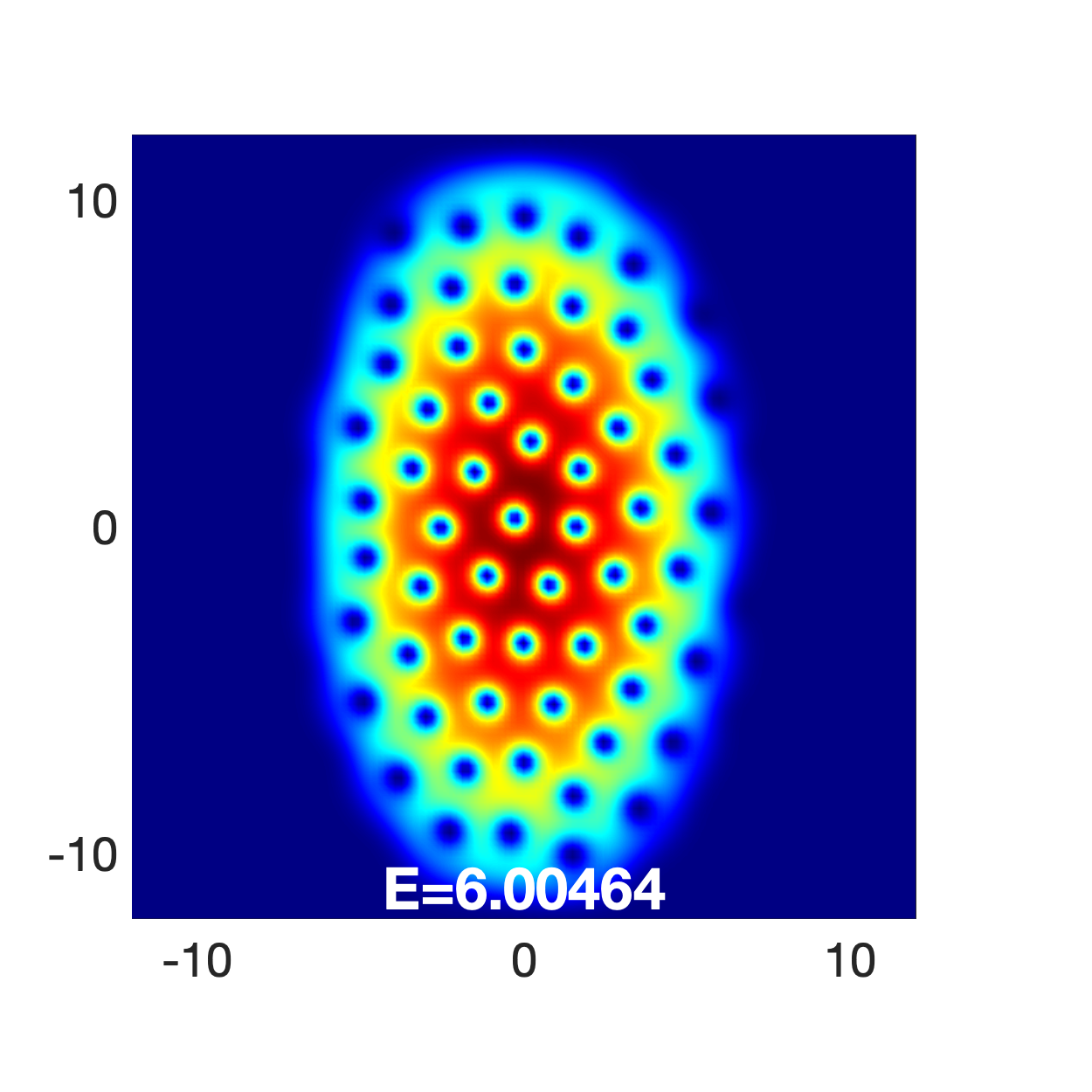}
	\end{subfigure}
	\hspace{-0.5cm} 
	\begin{subfigure}{0.24\textwidth}
		\includegraphics[height=\textwidth,width=\textwidth, clip, trim = 2.5cm 3cm 1cm 1cm]{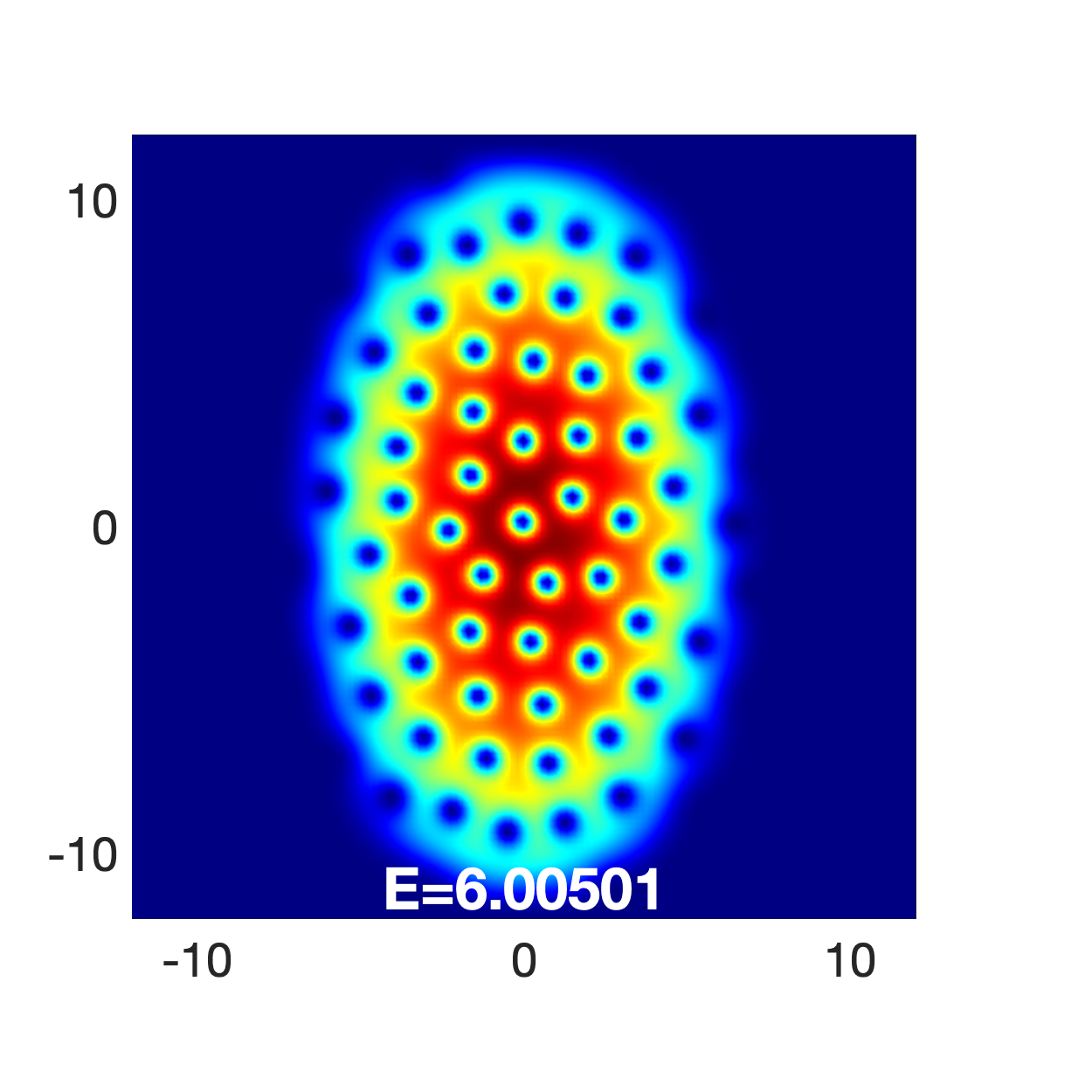} 
	\end{subfigure}
	\hspace{-0.5cm} 
	\begin{subfigure}{0.24\textwidth}
		\includegraphics[height=\textwidth,width=\textwidth, clip, trim = 2.5cm 3cm 1cm 1cm]{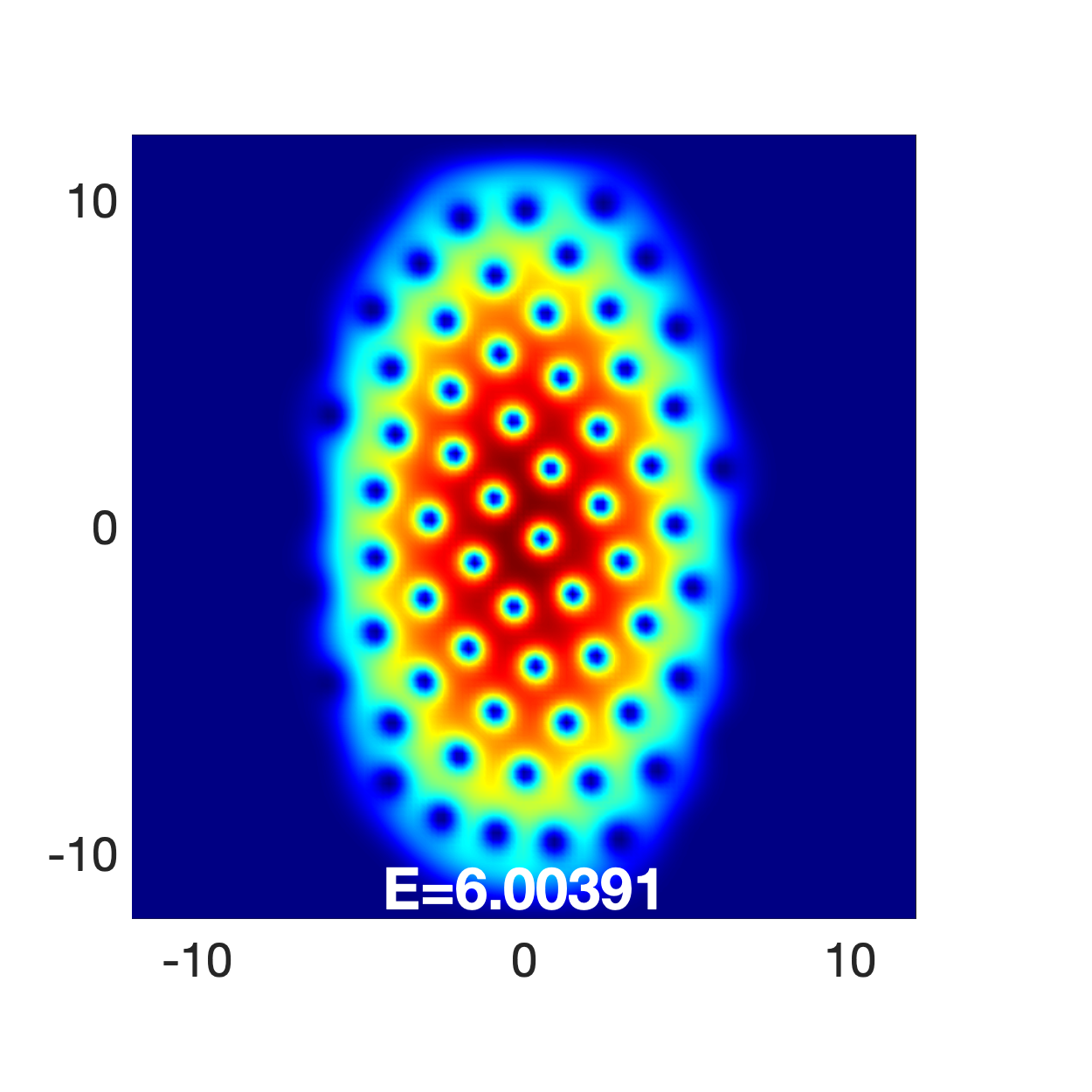} 
	\end{subfigure}
	\hspace{-0.5cm} 
	\begin{subfigure}{0.24\textwidth}
		\includegraphics[height=\textwidth,width=\textwidth, clip, trim = 2.5cm 3cm 0cm  1cm]{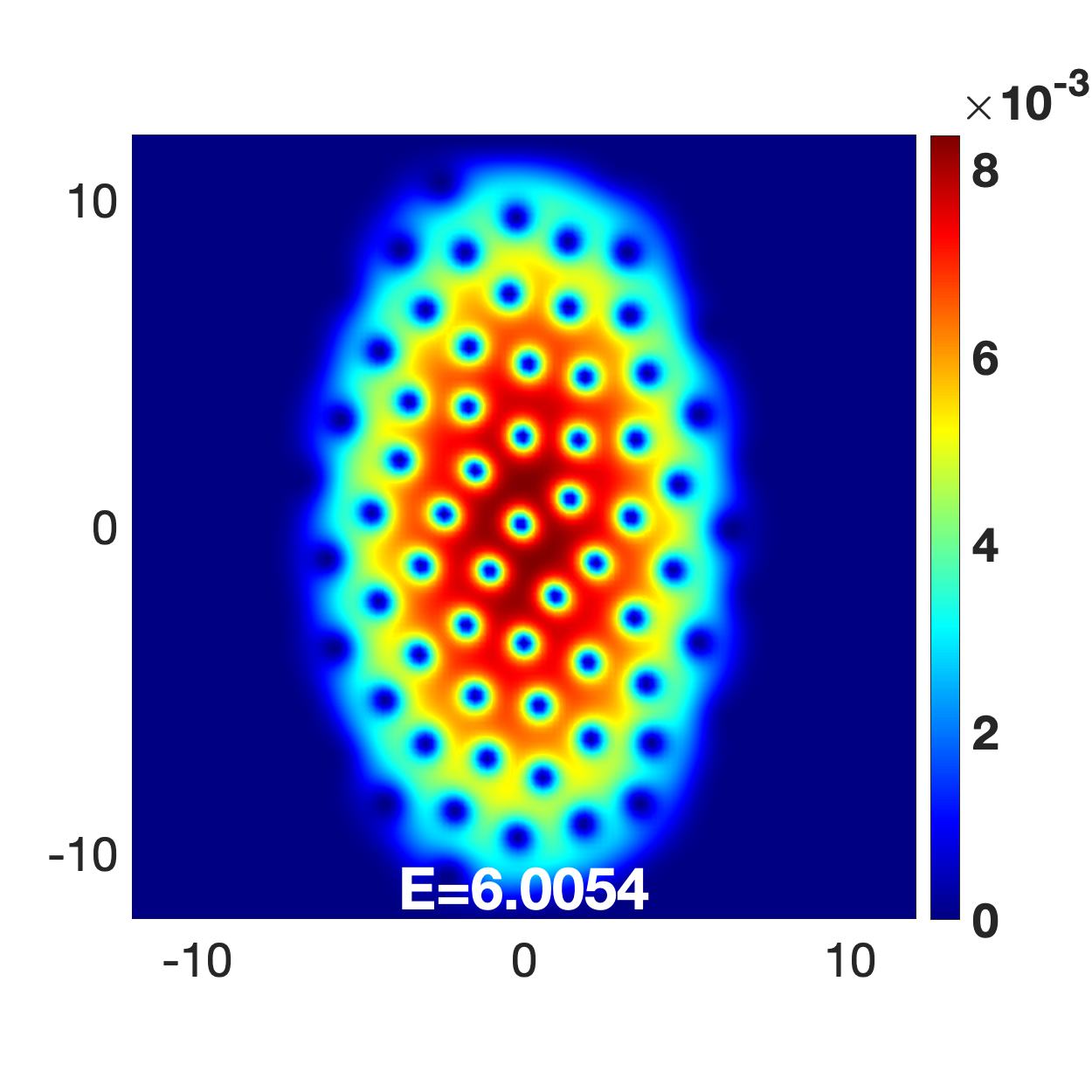}
	\end{subfigure}
    \captionsetup{skip=2pt} 
	\caption{$|\phi^{n+1}|^2$ for Example~\ref{Ex:rotAnis}  with $\Omega=0.9$, $\tau_0=1/64$, $\tau_f=1/128$, $r=2$, $\tol= 2\times10^{-3}$. From left to right: Algorithm~\ref{Alg:EconcOrd1}, Algorithm~\ref{Alg:EconcOrd2}, Algorithm~\ref{Alg:EconcOrd1_Adap}, and Algorithm~\ref{Alg:EconcOrd2_Adap}}\label{fig:2DOmg09}
\end{figure}

\begin{figure}[htbp]
	\centering
	\includegraphics[width=0.4\textwidth]{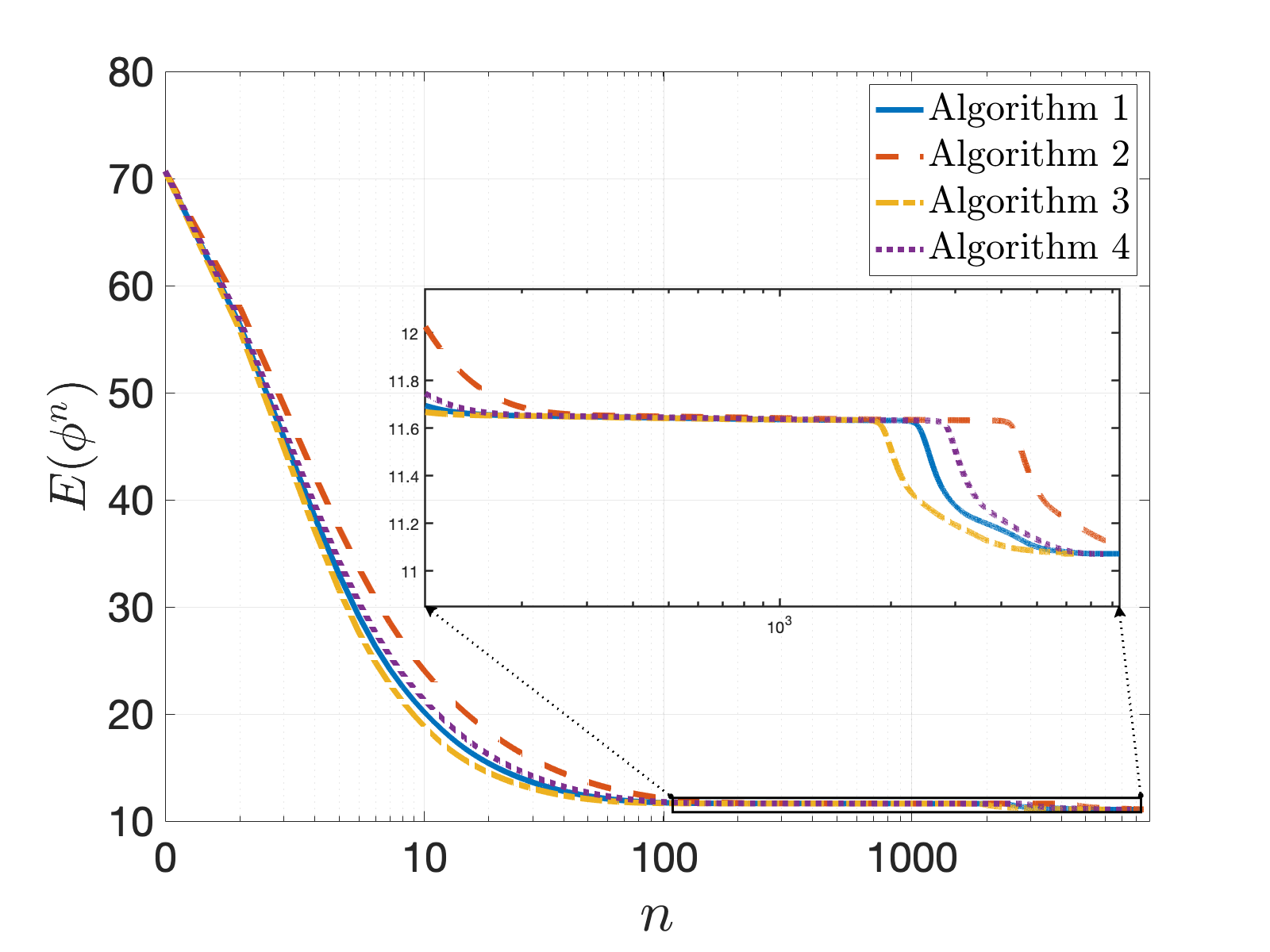}
	\includegraphics[width=0.4\textwidth]{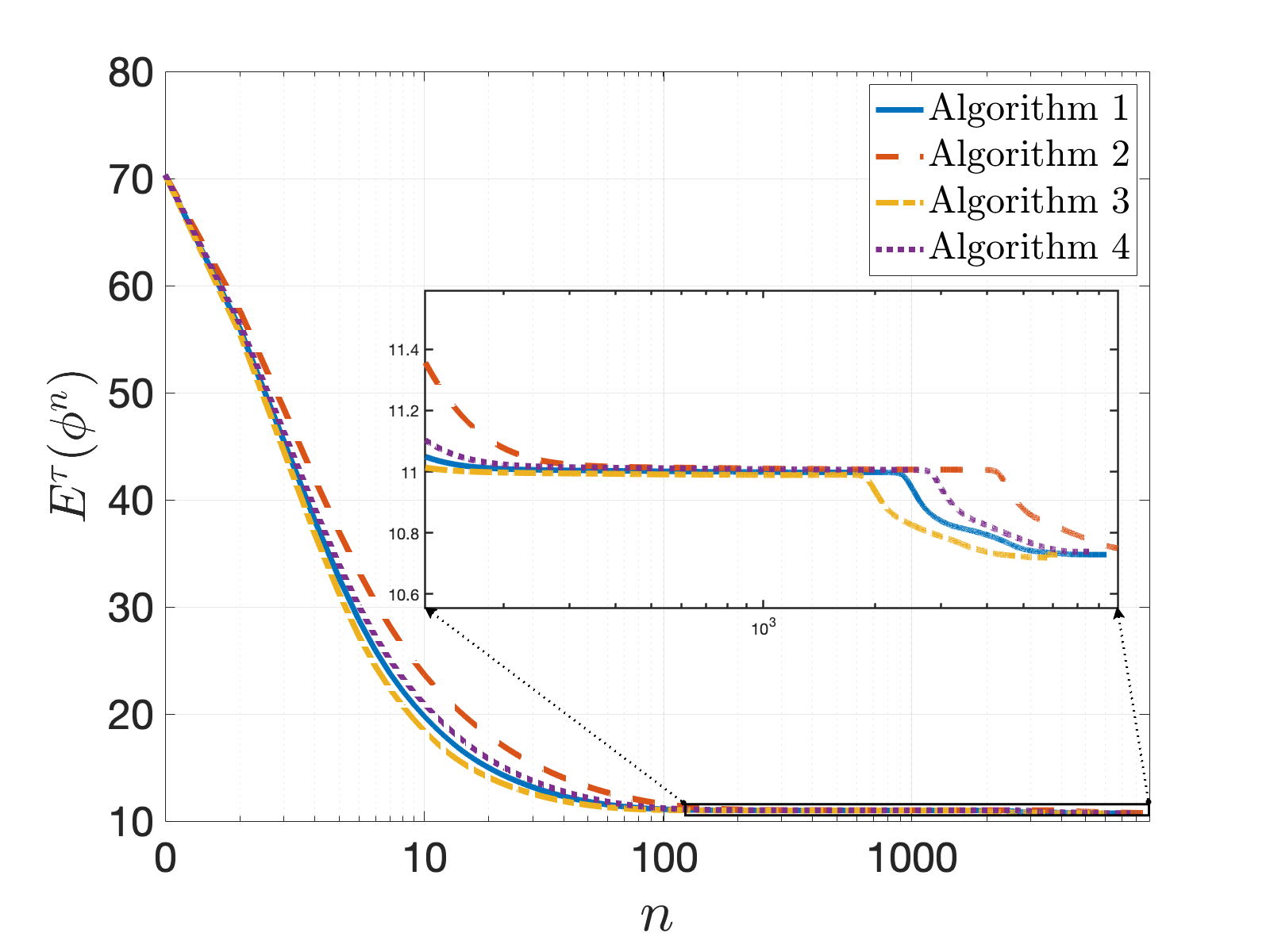}
      \captionsetup{skip=2pt} 
	\caption{Approximated energy for Example~\ref{Ex:rotAnis} using different algorithms with $\Omega=0.5$, $\tau_0=1/64$, $\tau_f=1/128$, $r=2$, $\tol=2\times 10^{-3}$.}
	\label{fig:2drotEomg05}
\end{figure}
\begin{figure}[htbp]
	\centering
	\includegraphics[width=0.4\textwidth]{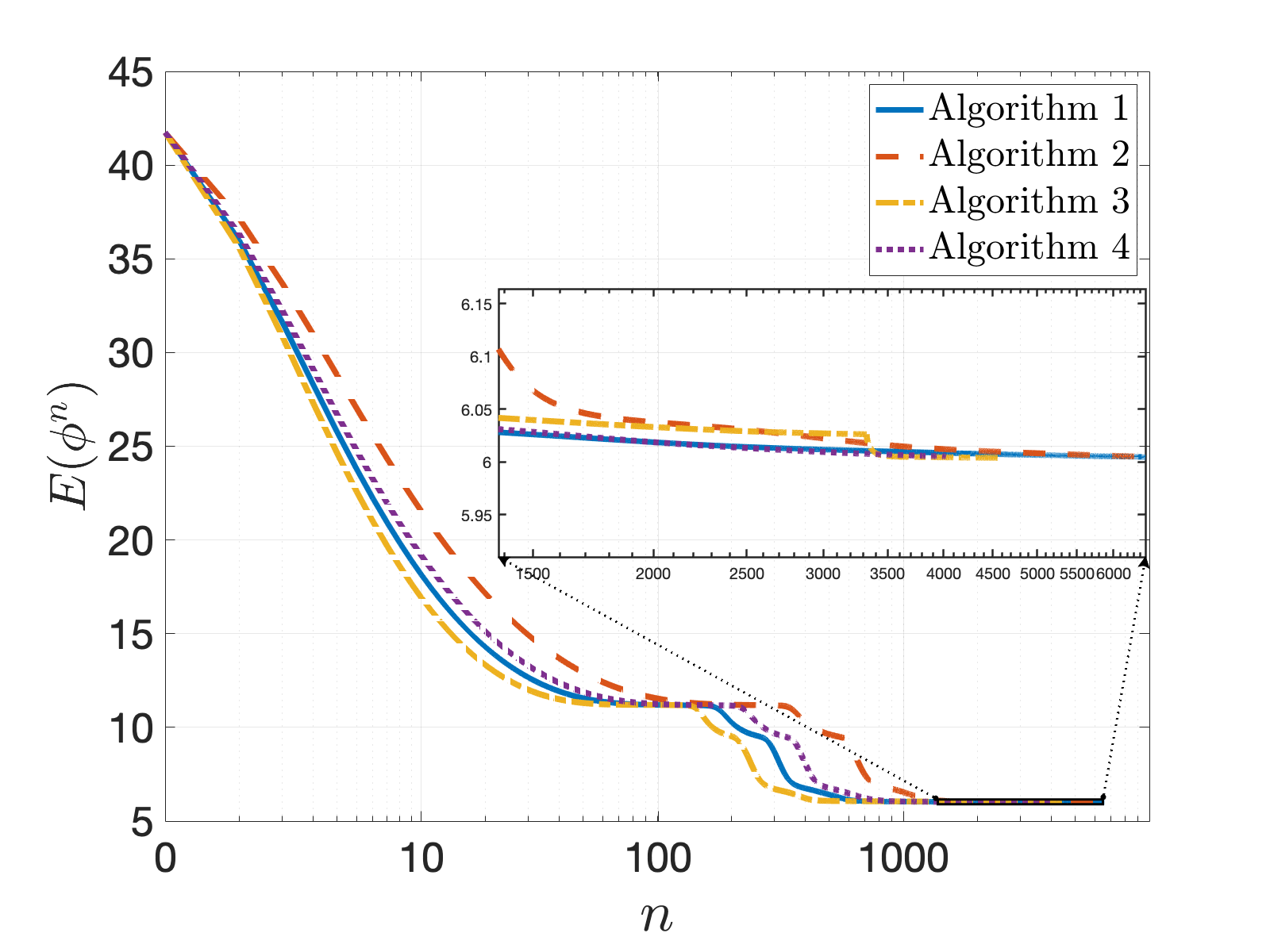}
	\includegraphics[width=0.4\textwidth]{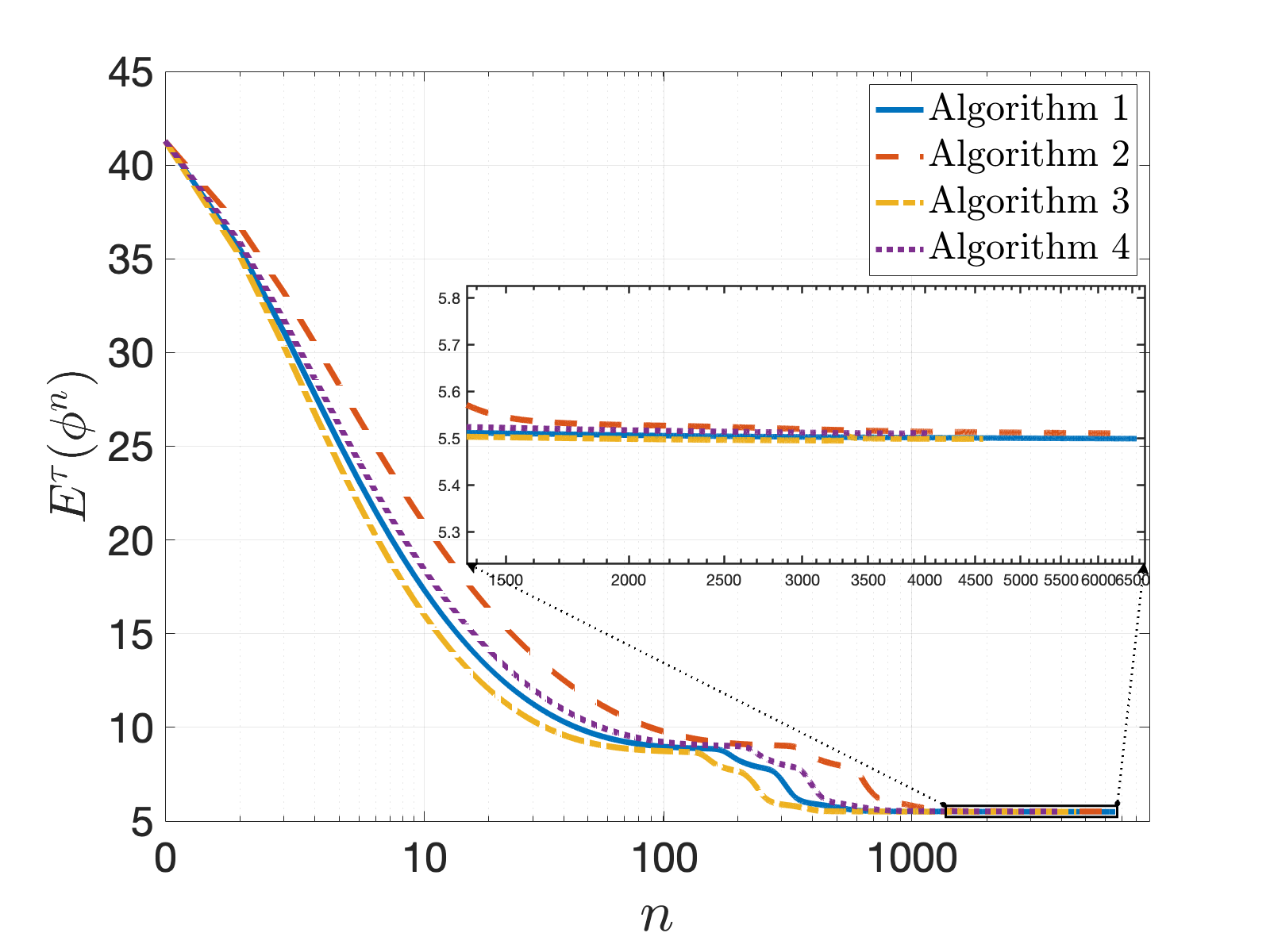}
      \captionsetup{skip=2pt} 
	\caption{Approximated energy for Example~\ref{Ex:rotAnis} using different algorithms with $\Omega=0.9$, $\tau_0=1/64$, $\tau_f=1/128$, $r=2$, $\tol=2\times 10^{-3}$.}
	\label{fig:2drotEomg09}
\end{figure}

\begin{figure}[h!]
	\centering
	\begin{subfigure}{0.4\textwidth}
		\includegraphics[width=\textwidth,clip, trim = 0cm 0cm 0cm 1cm]{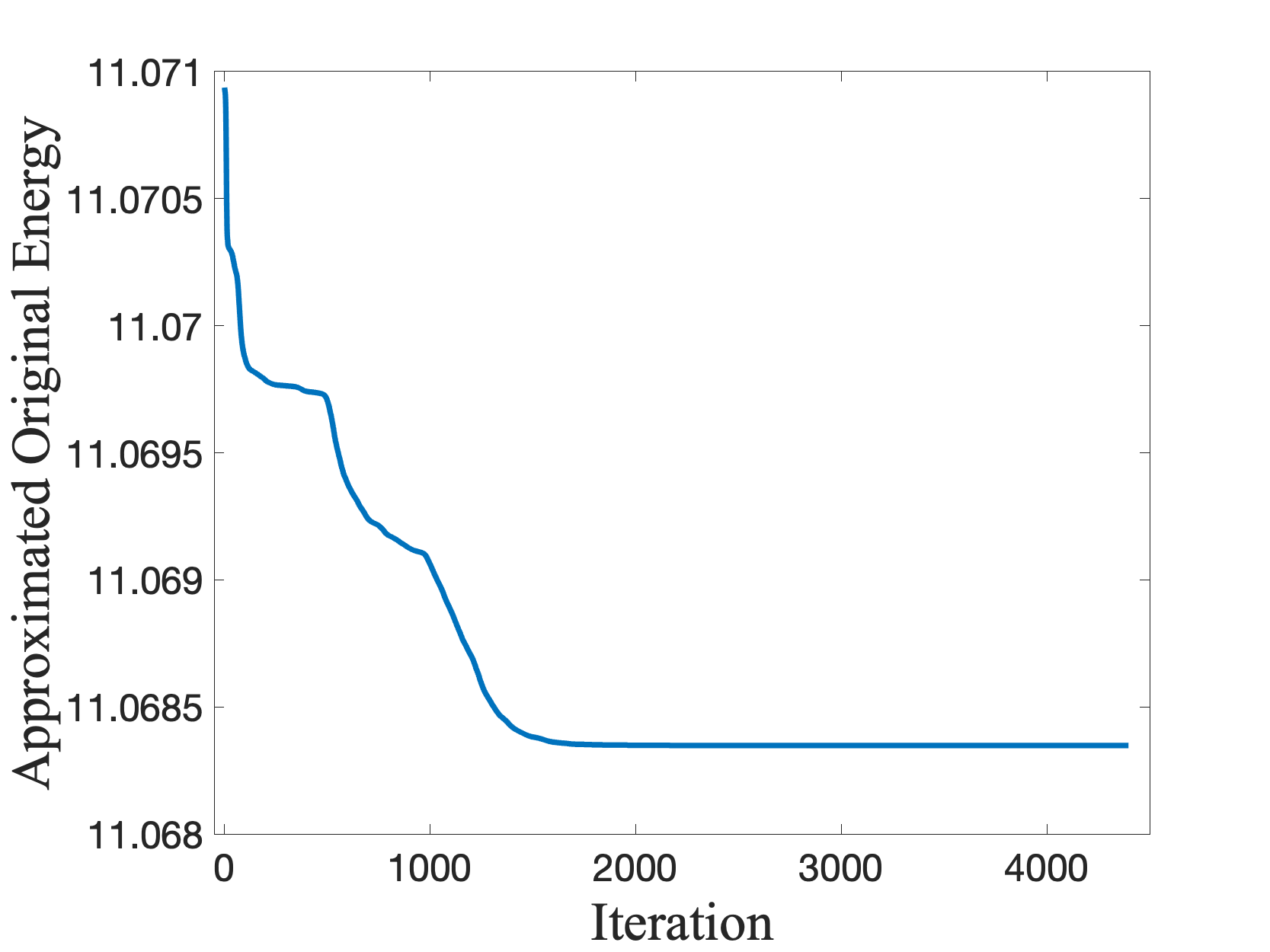}
	\end{subfigure}
	\begin{subfigure}{0.4\textwidth}
		\includegraphics[height=0.75\textwidth,width=0.9\textwidth, clip, trim = 2.5cm 3cm 0cm 2cm]{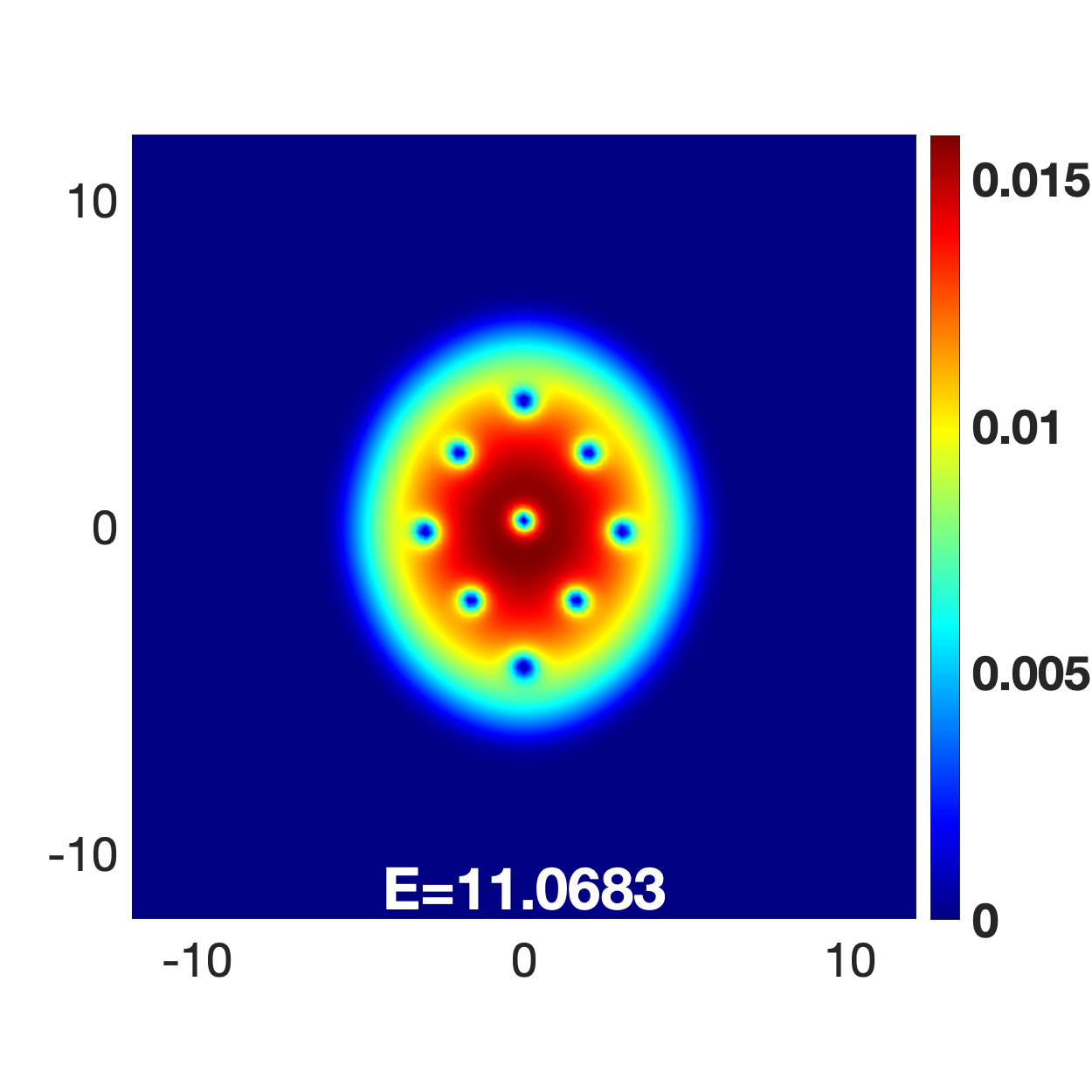}
	\end{subfigure}
  \captionsetup{skip=2pt} 
	\caption{Energy evolution (LEFT) and the final state of \( |\phi|^2 \) (RIGHT) for Example~\ref{Ex:rotAnis} with \( \Omega = 0.5 \) obtained using the RLBFGS algorithm.}
	\label{fig:Omg05rlbfgs}
\end{figure}

\section{Conclusion}\label{sec:conclusion}
We have introduced two relaxed formulations of the Gross--Pitaevskii energy functional, achieving first- and second-order accuracy in the relaxation parameter \( \tau \). Through rigorous theoretical analysis, it was shown that the relaxed functionals converge to the original functional as \( \tau \to 0 \), while their concavity facilitates optimization and guarantees energy dissipation during normalization. To solve the resulting optimization problems, energy-dissipative algorithms were developed using sequential linear programming, with their stability rigorously established. Furthermore, an adaptive \( \tau \) strategy was proposed, enabling a dynamic balance between accuracy and computational efficiency, which significantly enhances performance. Numerical experiments demonstrated the stability, convergence, and energy dissipation of the proposed methods, confirming their reliability and effectiveness.

The framework presented in this work offers a robust and efficient approach for computing the ground state of Bose--Einstein condensates (BECs). Its adaptability and theoretical rigor suggest strong potential for extension to more complex physical systems and broader optimization problems, making it a valuable tool for future research.

\bibliographystyle{siamplain}
\bibliography{BEC_concave}

\appendix
\section{Lemmas on the existence of a local minimizer for the first-order relaxed energy functional~\eqref{eng_concOrd1}}\label{sec:append}

To establish the existence of a local minimum for the relaxed problem \eqref{prob_relxOrd1}, we first prove that the energy functional \( E^{1,\tau}(\phi) \), defined in \eqref{eng_concOrd1}, satisfies \( E^{1,\tau}(\phi) \geq E^{1,\tau}(|\phi|) \) for all \(\phi \in \mathbb{S}\). This result is stated in the following lemma:
\begin{lemma}\label{lem:EconvOrd1absphi}
	For \(\tau > 0\),  
	\(\phi \in \mathbb{S}\), the energy functional in \eqref{eng_concOrd1} satisfies
	\[E^{1,\tau}(\phi)\ge E^{1,\tau}(|\phi|).\]
\end{lemma}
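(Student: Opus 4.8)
The plan is to exploit the fact that, among the four contributions defining $E^{1,\tau}$ in \eqref{eng_concOrd1}, three of them — the constant $\tfrac{1}{2\tau}$, the potential term $\int_\Dc V(\x)|\phi|^2\,d\x$, and the interaction term $\tfrac{\beta}{2}\int_\Dc|\phi|^4\,d\x$ — depend on $\phi$ only through its modulus $|\phi|$, and hence are left unchanged when $\phi$ is replaced by $|\phi|$. Consequently the claimed inequality $E^{1,\tau}(\phi)\ge E^{1,\tau}(|\phi|)$ reduces entirely to the single kinetic-type term, that is, to establishing
\[
\int_\Dc \left| e^{\frac{\tau}{2}\Delta}\phi\right|^2 d\x \le \int_\Dc \left| e^{\frac{\tau}{2}\Delta}|\phi|\right|^2 d\x,
\]
where the overall sign reversal is accounted for by the negative coefficient $-\tfrac{1}{2\tau}<0$ multiplying this term in the functional.

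To prove this reduced inequality I would represent the semigroup through its integral kernel: on the bounded domain $\Dc$ with periodic boundary conditions, $e^{\frac{\tau}{2}\Delta}$ acts as convolution against the periodized Gaussian heat kernel $G_{\tau/2}(\x,\y)$, obtained by periodizing $(2\pi\tau)^{-d/2}\exp(-|\x-\y|^2/(2\tau))$. The crucial structural fact is that this kernel is strictly positive, so that $e^{\frac{\tau}{2}\Delta}$ is a positivity-preserving operator. Writing $(e^{\frac{\tau}{2}\Delta}\phi)(\x)=\int_\Dc G_{\tau/2}(\x,\y)\phi(\y)\,d\y$ and moving the modulus inside the integral then yields the pointwise domination
\[
\bigl| (e^{\frac{\tau}{2}\Delta}\phi)(\x)\bigr| \le \int_\Dc G_{\tau/2}(\x,\y)\,|\phi(\y)|\,d\y = (e^{\frac{\tau}{2}\Delta}|\phi|)(\x),
\]
valid for almost every $\x\in\Dc$. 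Squaring this bound and integrating over $\Dc$ produces exactly the required inequality for the kinetic term, and combining it with the first observation completes the argument.

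The main obstacle — indeed the only nontrivial input — is the positivity of the periodic heat kernel, which must be invoked explicitly rather than assumed; it follows at once from the fact that the periodized Gaussian is a sum of strictly positive terms. A secondary technical point I would address is finiteness: using the membership $\phi\in\mathbb{S}$ from \eqref{Space_SOrd1} to guarantee $E^{1,\tau}(\phi)<\infty$, and noting that $|\phi|$ inherits the same $L^2$ norm and the same values of $V|\phi|^2$ and $|\phi|^4$, one sees that $E^{1,\tau}(|\phi|)$ is likewise finite, so the comparison of the two energies is well posed. No regularity beyond $\phi\in L^2(\Dc)$ is needed, since the smoothing property of the semigroup renders both $e^{\frac{\tau}{2}\Delta}\phi$ and $e^{\frac{\tau}{2}\Delta}|\phi|$ well-defined and square-integrable.
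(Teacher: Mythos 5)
Your proposal is correct and follows essentially the same route as the paper's proof: reduce the comparison to the kinetic term (the remaining terms depend on $\phi$ only through $|\phi|$), represent $e^{\frac{\tau}{2}\Delta}$ by the heat kernel, invoke its positivity to get the pointwise bound $\bigl|e^{\frac{\tau}{2}\Delta}\phi\bigr|\le e^{\frac{\tau}{2}\Delta}|\phi|$, then square and integrate. If anything, your treatment is slightly more careful than the paper's, since you justify kernel positivity under periodic boundary conditions via periodization of the Gaussian and verify that $E^{1,\tau}(|\phi|)$ is finite, points the paper passes over.
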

\begin{proof}
	The operator \( e^{\frac{\tau}{2} \Delta} \phi(\x) \) can be expressed  as
	\[
	e^{\frac{\tau}{2} \Delta} \phi(\x) = \int_{\mathcal{D}} G(\x - \y, \tau/2) \phi(\y) \, d\y,
	\]
	where \( G(\x - \y, t) \) is the heat kernel. In \( d \)-dimensional Euclidean space \( \mathbb{R}^d \), the heat kernel has the explicit form
	\[
	G(\x-\y, t) = \frac{1}{(4 \pi t)^{d/2}}  \exp\left( -\frac{|\x - \y|^2}{4t} \right).
	\]
	For \( \phi \) satisfying periodic boundary conditions on \( \partial \mathcal{D} \), the heat kernel still satisfies the non-negativity property
	\[
	G(\x - \y, t) \ge 0, \quad \forall \x, \y \in \mathcal{D}, \, t > 0.
	\]
	Using the positivity of the heat kernel, we derive
	\[
	\begin{aligned}
		\left| e^{\frac{\tau}{2} \Delta} |\phi(\x)| \right| &= \left| \int_{\Dc} G(\x - \y, \tau/2) |\phi(\y)| \, d\y \right| = \int_{\Dc} \left| G(\x - \y, \tau/2) \phi(\y) \right| \, d\y \\
		&\geq \left| \int_{\Dc} G(\x - \y, \tau/2) \phi(\y) \, d\y \right| = \left| e^{\frac{\tau}{2} \Delta} \phi(\x) \right|.
	\end{aligned}
	\]
	This implies
	\[
	-\frac{1}{2\tau} \int_{\Dc} \left| e^{\frac{\tau}{2} \Delta} |\phi| \right|^2 \, d\x \leq -\frac{1}{2\tau} \int_{\Dc} \left| e^{\frac{\tau}{2} \Delta} \phi \right|^2 \, d\x.
	\]
	Combining this with the definition of \( E^{1,\tau}(\phi) \), we conclude that
	\[
	E^{1,\tau}(\phi) \ge E^{1,\tau}(|\phi|),
	\]
	as required.
\end{proof}
The existence of a local minimum for \( E^{1,\tau}(\phi) \) on \( \mathbb{S} \) is therefore equivalent to the existence of a local minimum for \( E^{1,\tau}(|\phi|) \). Let \(\rho := |\phi|^2\). The functional \( E^{1,\tau}(|\phi|) \) can be expressed in terms of \(\rho\) as
\begin{equation} \label{eng_concOrd1convx}  
	E^{1,\tau}(\sqrt{\rho}) := \frac{1}{2\tau} + \int_{\mathcal{D}} \left( -\frac{1}{2\tau} \left| e^{\frac{\tau}{2} \Delta} \sqrt{\rho} \right|^2 + V(\x) \rho + \frac{\beta}{2} |\rho|^2  \right) \, d\x.
\end{equation}
From the lemma above, we obtain the equivalence
\[
\min_{\phi \in \mathbb{S}} E^{1,\tau}(\phi) = \min_{\rho \in \mathcal{M}} E^{1,\tau}(\sqrt{\rho}),
\]
where the feasible set \(\mathcal{M}\) is defined as
\begin{equation}\label{fesset_Ord1}
	\mathcal{M} := \left\{ \rho \in L^2(\mathcal{D}) \,\middle|\, \rho \ge 0, \,\, \int_{\mathcal{D}} \rho \, d\x = 1, \,\, E^{1,\tau}(\sqrt{\rho}) < \infty \right\}.
\end{equation}
Next, we present some key properties of \( E^{1,\tau}(\sqrt{\rho}) \).

\begin{lemma}\label{lem:convex}
For \( \tau > 0 \), under Assumption~\ref{assum:BEC}, the energy functional \( E^{1,\tau}(\sqrt{\rho}) \), defined in \eqref{eng_concOrd1convx}, is positive, coercive, and convex on the feasible set \( \mathcal{M} \) given in \eqref{fesset_Ord1}.
\end{lemma}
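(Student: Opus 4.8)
The plan is to establish the three asserted properties—positivity, coercivity, and convexity—separately, with the first two falling out immediately from the $L^2$-contractivity of the heat semigroup, so that the real content lies in the convexity of the nonlocal term.

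First, for \emph{positivity} and \emph{coercivity} I would use that $\e^{\frac{\tau}{2}\Delta}$ is a contraction on $L^2(\Dc)$, hence
\[
\int_{\Dc}\left|\e^{\frac{\tau}{2}\Delta}\sqrt{\rho}\right|^2 d\x \;\le\; \|\sqrt{\rho}\|_2^2 \;=\; \int_{\Dc}\rho\,d\x \;=\; 1
\]
for every $\rho\in\mathcal M$. Substituting this bound into \eqref{eng_concOrd1convx} cancels the leading constant $\frac{1}{2\tau}$ and leaves
\[
E^{1,\tau}(\sqrt{\rho}) \;\ge\; \int_{\Dc}\Big(V(\x)\rho + \tfrac{\beta}{2}\rho^2\Big)\,d\x \;\ge\; \tfrac{\beta}{2}\|\rho\|_2^2 .
\]
Since $V\ge 0$, $\beta>0$, and $\rho\not\equiv 0$ on $\mathcal M$ (because $\int_\Dc\rho\,d\x=1$), the right-hand side is strictly positive, which gives positivity; and as it grows quadratically in $\|\rho\|_2$, the sublevel sets are $L^2$-bounded, which gives coercivity.

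The crux is \emph{convexity}. Here I would first rewrite the nonlocal term as a bilinear form with a nonnegative kernel: by self-adjointness and the semigroup identity $\e^{\frac{\tau}{2}\Delta}\e^{\frac{\tau}{2}\Delta}=\e^{\tau\Delta}$,
\[
\int_{\Dc}\left|\e^{\frac{\tau}{2}\Delta}\sqrt{\rho}\right|^2 d\x
= \big\langle \e^{\tau\Delta}\sqrt{\rho},\,\sqrt{\rho}\big\rangle
= \iint_{\Dc\times\Dc} G(\x-\y,\tau)\,\sqrt{\rho(\x)}\,\sqrt{\rho(\y)}\,d\x\,d\y,
\]
where $G(\cdot,\tau)\ge 0$ is the (periodic) heat kernel. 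The elementary fact driving the argument is that $(a,b)\mapsto\sqrt{ab}$ is jointly concave on $[0,\infty)^2$, which I would prove cleanly via the representation $\sqrt{ab}=\inf_{t>0}\tfrac12\big(ta+t^{-1}b\big)$ as an infimum of affine functions. Consequently, for each fixed $(\x,\y)$ the integrand $\sqrt{\rho(\x)}\sqrt{\rho(\y)}$ is concave in $\rho$, and since $G(\x-\y,\tau)\ge 0$ integration against it preserves concavity; thus the nonlocal term is concave in $\rho$, so $-\frac{1}{2\tau}$ times it is convex. Combined with the affine potential term $\int_\Dc V(\x)\rho\,d\x$ and the convex interaction term $\frac{\beta}{2}\int_\Dc\rho^2\,d\x$, this yields convexity of $E^{1,\tau}(\sqrt{\rho})$ on the convex set $\mathcal M$.

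The main obstacle is precisely this convexity step, and specifically the passage from the pointwise joint concavity of $\sqrt{ab}$ to concavity of the full functional. This rests on two structural ingredients that I would verify with care: the nonnegativity of the heat kernel (which persists under periodic boundary conditions) and the reduction of $\|\e^{\frac{\tau}{2}\Delta}\sqrt{\rho}\|_2^2$ to a double integral against $G(\cdot,\tau)$ via self-adjointness and the semigroup property. Once these are in place, integrating the pointwise concavity inequality along a segment $\rho_\lambda=(1-\lambda)\rho_0+\lambda\rho_1$ against the nonnegative kernel is routine, the remaining measurability and finiteness checks being controlled by the constraint $E^{1,\tau}(\sqrt{\rho})<\infty$ defining $\mathcal M$.
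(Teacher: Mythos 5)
Your proposal is correct and follows essentially the same route as the paper: both establish positivity and coercivity from the $L^2$-contractivity of $\e^{\frac{\tau}{2}\Delta}$, and both prove convexity by writing the nonlocal term as a double integral against the nonnegative heat kernel and exploiting the joint concavity of $(a,b)\mapsto\sqrt{ab}$. The only difference is cosmetic: you verify this pointwise concavity via the representation $\sqrt{ab}=\inf_{t>0}\tfrac12\left(ta+t^{-1}b\right)$, whereas the paper expands the square along the segment $\theta\rho_1+(1-\theta)\rho_2$ and applies the AM--GM inequality directly.
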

\begin{proof}
From Assumption~\ref{assum:BEC}, we have \( V(\x) \geq 0 \). Hence,
	\[
	E^{1,\tau}(\sqrt{\rho}) \geq \frac{1}{2\tau} - \int_{\mathcal{D}} \frac{1}{2\tau} \left| e^{\frac{\tau}{2} \Delta} \sqrt{\rho} \right|^2 \, d\x + \frac{\beta}{2} \int_{\mathcal{D}} \rho^2 \, d\x.
	\]
 Since \( e^{\frac{\tau}{2}\Delta} \) is a contraction semigroup on \( L^2(\mathcal{D}) \), it follows that
\[
	\frac{1}{2\tau} - \int_{\mathcal{D}} \frac{1}{2\tau} \left| e^{\frac{\tau}{2} \Delta} \sqrt{\rho} \right|^2 \, d\x \geq \frac{1}{2\tau} - \frac{1}{2\tau} \|\sqrt{\rho}\|_2^2 = 0.
	\]
	Combining this with \(\beta > 0\), we conclude that
	\[
	E^{1,\tau}(\sqrt{\rho}) \ge \frac{\beta}{2} \int_{\mathcal{D}} \rho^2 \, d\x > 0,
	\]
which establishes the positivity and coercivity on \( \mathcal{M} \).

To prove convexity, we note that all terms in \( E^{1,\tau}(\sqrt{\rho}) \), except for the first integral, are either constant or linear/quadratic in \( \rho \) and hence convex. For the integral term in \eqref{eng_concOrd1convx}, consider \(\rho_1, \rho_2 \in L^2(\Dc)\) with \(\rho_1, \rho_2 \geq 0\) and define \(\rho := \theta \rho_1 + (1-\theta)\rho_2\) for \(0 < \theta < 1\). Using the convolution representation of \(e^{\frac{\tau}{2}\Delta}\sqrt{\rho}\), we have
	\begin{align*}
		&\int_{\Dc} \left| e^{\frac{\tau}{2} \Delta} \sqrt{\rho} \right|^2\, d\x
		\\ &= \int_{\Dc}\int_{\Dc} G(\x-\y,\tau) \sqrt{\theta\rho_1(\y) + (1-\theta)\rho_2(\y)} \sqrt{\theta\rho_1(\x) + (1-\theta)\rho_2(\x)} \, d\y d\x.
	\end{align*}  
	On the other hand,  
	\begin{align*}
		&\theta \int_{\Dc} \left| e^{\frac{\tau}{2} \Delta} \sqrt{\rho_1} \right|^2\, d\x + (1-\theta) \int_{\Dc} \left| e^{\frac{\tau}{2} \Delta} \sqrt{\rho_2} \right|^2\, d\x \\
		&= \int_{\Dc} \int_{\Dc} G(\x-\y,\tau) \left( \theta \sqrt{\rho_1(\y)} \sqrt{\rho_1(\x)} + (1-\theta) \sqrt{\rho_2(\y)} \sqrt{\rho_2(\x)} \right) \, d\y d\x.
	\end{align*}  
	Using the nonnegativity of \( \rho \), \( \rho_1 \), and \( \rho_2 \), along with \( 0 < \theta < 1 \), we obtain
	\begin{align*}
		&\left(\theta \rho_1(\y) + (1-\theta) \rho_2(\y)\right)\left(\theta \rho_1(\x) + (1-\theta) \rho_2(\x)\right)\\
		&- \left( \theta \sqrt{\rho_1(\y)} \sqrt{\rho_1(\x)} + (1-\theta) \sqrt{\rho_2(\y)} \sqrt{\rho_2(\x)} \right)^2  \\
		&= \theta(1-\theta) \left( \rho_1(\y) \rho_2(\x) + \rho_1(\x) \rho_2(\y) \right)-2\theta(1-\theta) \sqrt{\rho_1(\y)\rho_1(\x)\rho_2(\y)\rho_2(\x)} \geq 0.
	\end{align*}
	Combining with \( G(\x-\y, \tau) \geq 0 \) on \( \Dc \), it follows that
	\[
	\int_{\Dc} \left| e^{\frac{\tau}{2} \Delta} \sqrt{\rho} \right|^2\, d\x \geq \theta \int_{\Dc} \left| e^{\frac{\tau}{2} \Delta} \sqrt{\rho_1} \right|^2\, d\x + (1-\theta) \int_{\Dc} \left| e^{\frac{\tau}{2} \Delta} \sqrt{\rho_2} \right|^2\, d\x.
	\]
	This implies  
	\[
	-\frac{1}{2\tau} \int_{\Dc} \left| e^{\frac{\tau}{2} \Delta} \sqrt{\rho} \right|^2\, d\x \leq -\frac{\theta}{2\tau} \int_{\Dc} \left| e^{\frac{\tau}{2} \Delta} \sqrt{\rho_1} \right|^2\, d\x - \frac{1-\theta}{2\tau} \int_{\Dc} \left| e^{\frac{\tau}{2} \Delta} \sqrt{\rho_2} \right|^2\, d\x.
	\]  
	Thus, the  convexity of the energy \( E^{1,\tau}(\sqrt{\rho}) \) is established.  
	The proof is complete.
\end{proof}
We now demonstrate the existence and uniqueness of the local minimum solution for \( E^{1,\tau}(\sqrt{\rho}) \) on the feasible set \(\mathcal{M}\).
\begin{lemma}\label{lem:exisAbsPhiOrd1}
	For \(\tau > 0\) and under Assumption~\ref{assum:BEC}, there exists a unique solution \(\rho \in \mathcal{M}\) to the minimization problem
	\[
	\min_{\rho \in \mathcal{M}} E^{1,\tau}(\sqrt{\rho}).
	\]
	where \(\mathcal{M}\) is defined in \eqref{fesset_Ord1} and \( E^{1,\tau}(\sqrt{\rho}) \) is given in \eqref{eng_concOrd1convx}.
\end{lemma}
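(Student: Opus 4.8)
The plan is to apply the direct method of the calculus of variations, leveraging the structural properties already established in Lemma~\ref{lem:convex}. Since $E^{1,\tau}(\sqrt{\rho})$ is positive on $\mathcal{M}$, the infimum $m := \inf_{\rho \in \mathcal{M}} E^{1,\tau}(\sqrt{\rho}) \geq 0$ is finite, so I may select a minimizing sequence $\{\rho_n\} \subset \mathcal{M}$ with $E^{1,\tau}(\sqrt{\rho_n}) \to m$. The coercivity bound from Lemma~\ref{lem:convex}, namely $E^{1,\tau}(\sqrt{\rho}) \geq \frac{\beta}{2}\|\rho\|_2^2$, immediately yields a uniform bound $\|\rho_n\|_2 \leq C$. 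Since $L^2(\Dc)$ is reflexive, bounded sequences are weakly precompact, so I can extract a subsequence (not relabeled) with $\rho_n \rightharpoonup \rho_*$ weakly in $L^2(\Dc)$.

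Next I would verify that $\rho_* \in \mathcal{M}$, i.e. that the feasible set is weakly closed. The cone $\{\rho \geq 0\}$ is convex and strongly closed in $L^2$, hence weakly closed, giving $\rho_* \geq 0$ almost everywhere. Because $\Dc$ is bounded, $\mathbf{1} \in L^2(\Dc)$, so testing weak convergence against the constant function yields $\int_{\Dc}\rho_*\,d\x = \lim_n \int_{\Dc}\rho_n\,d\x = 1$. Finiteness of the energy at $\rho_*$ follows a posteriori from the lower semicontinuity step (or directly, since $e^{\frac{\tau}{2}\Delta}$ is a contraction and $V \in L^\infty(\Dc)$).

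The crux is establishing weak lower semicontinuity of $E^{1,\tau}(\sqrt{\cdot})$. Since Lemma~\ref{lem:convex} shows this functional is convex in $\rho$, it suffices to prove it is strongly lower semicontinuous, because a convex functional that is strongly lsc is automatically weakly lsc. The potential and interaction terms $\int_{\Dc} V\rho\,d\x$ and $\frac{\beta}{2}\int_{\Dc}\rho^2\,d\x$ are strongly continuous on $L^2$. For the remaining term I would invoke the elementary inequality $|\sqrt{a}-\sqrt{b}|^2 \leq |a-b|$ for $a,b \geq 0$, which gives $\|\sqrt{\rho_n}-\sqrt{\rho}\|_2^2 \leq \|\rho_n - \rho\|_1 \leq |\Dc|^{1/2}\|\rho_n-\rho\|_2$; hence $\rho_n \to \rho$ strongly in $L^2$ implies $\sqrt{\rho_n} \to \sqrt{\rho}$ strongly, and since $e^{\frac{\tau}{2}\Delta}$ is bounded on $L^2$, the map $\rho \mapsto \int_{\Dc}|e^{\frac{\tau}{2}\Delta}\sqrt{\rho}|^2\,d\x$ is strongly continuous. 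Combining strong continuity with convexity yields weak lsc, whence $E^{1,\tau}(\sqrt{\rho_*}) \leq \liminf_n E^{1,\tau}(\sqrt{\rho_n}) = m$, so $\rho_*$ is a minimizer.

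Finally, uniqueness follows from strict convexity: the term $\frac{\beta}{2}\int_{\Dc}\rho^2\,d\x$ is strictly convex in $\rho$ for $\beta > 0$, while all other terms are convex by Lemma~\ref{lem:convex}, so $E^{1,\tau}(\sqrt{\cdot})$ is strictly convex on the convex set $\mathcal{M}$, which rules out two distinct minimizers. I expect the main obstacle to be the weak lower semicontinuity argument, specifically justifying the strong continuity of the composite map $\rho \mapsto \int_{\Dc}|e^{\frac{\tau}{2}\Delta}\sqrt{\rho}|^2\,d\x$ through the square-root composition; the inequality $|\sqrt{a}-\sqrt{b}|^2 \le |a-b|$ is the key device that tames the nonsmooth nonlinearity, after which the passage from convexity to weak lsc is standard.
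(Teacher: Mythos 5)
Your proposal is correct and follows essentially the same route as the paper: the direct method with a minimizing sequence, weak compactness in the reflexive space $L^2(\Dc)$, weak closedness of $\mathcal{M}$, and weak lower semicontinuity of the heat-semigroup term obtained by combining its convexity (Lemma~\ref{lem:convex}) with strong continuity. In fact you supply three details the paper leaves implicit --- the coercivity bound $E^{1,\tau}(\sqrt{\rho}) \ge \tfrac{\beta}{2}\|\rho\|_2^2$ justifying boundedness of the minimizing sequence before extracting a weak limit, the inequality $|\sqrt{a}-\sqrt{b}|^2 \le |a-b|$ proving $\sqrt{\rho_n}\to\sqrt{\rho}$ in $L^2$, and strict convexity of $\tfrac{\beta}{2}\int_\Dc \rho^2\,d\x$ for uniqueness (the paper invokes only convexity, which alone would not suffice) --- so your write-up is, if anything, more complete.
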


\begin{proof}
	Let \( \{\rho_n\} \subset \mathcal{M} \) be a minimizing sequence for the functional \( E^{1,\tau}(\sqrt{\rho}) \). Since \( L^2(\mathcal{D}) \)  is reflexive, the Eberlein--\v{S}mulian theorem \cite[p.~430]{DunfSchw} guarantees the existence of a subsequence, still denoted by \( \{\rho_n\} \), such that \( \{\rho_n\} \) converges weakly in \( L^2(\mathcal{D})\) to  \(\rho^* \), i.e.,
	\[
	\rho_n \rightharpoonup \rho^* \quad \text{weakly in } L^2(\mathcal{D}).
	\]
	From the weak convergence of \( (\rho_n) \), it follows that
	\[
	\int_{\mathcal{D}} \rho_n \, d\x \to \int_{\mathcal{D}} \rho^* \, d\x \quad \text{as } n \to \infty.
	\]
	On the other hand, \(\rho_n\ge0\) implies \(\rho_*\ge0\) and hence \(\rho_*\in\mathcal{M}\).
	Since \( V(\x) \in L^\infty(\mathcal{D}) \) and \( V(\x) \geq 0 \), we deduce
	\[
	\int_{\mathcal{D}} V(\x) \rho_n \, d\x \to \int_{\mathcal{D}} V(\x) \rho^* \, d\x \quad \text{as } n \to \infty.
	\]
	The weak lower semicontinuity of the \( L^2 \)-norm, combined with \(\beta > 0\), implies
	\[
	\frac{\beta}{2} \int_{\mathcal{D}} (\rho^*)^2 \, d\x \leq \liminf_{n \to \infty} \frac{\beta}{2} \int_{\mathcal{D}} \rho_n^2 \, d\x.
	\]
	For \( -\frac{1}{2\tau}\int_{\mathcal{D}} \left| e^{\frac{\tau}{2} \Delta} \sqrt{\rho} \right|^2d\x \), Lemma \ref{lem:convex} shows it is convex in \(\rho\in\mathcal{M}\) and \(\mathcal{M}\) is a closed convex set. Then \( -\frac{1}{2\tau}\int_{\mathcal{D}} \left| e^{\frac{\tau}{2} \Delta} \sqrt{\rho} \right|^2d\x \) is weakly lower semi-continuous if and only if it is lower semi-continuous. Noticing
	if \(\rho_n\to\rho \in\mathcal{M}\) in \(L^2(\Dc)\), \(\sqrt{\rho_n}\to\sqrt{\rho}\) in \(L^2(\Dc)\), and the lower semi-continuity as well as the weakly lower semi-continuity of \( -\frac{1}{2\tau}\int_{\mathcal{D}} \left| e^{\frac{\tau}{2} \Delta} \sqrt{\rho} \right|^2d\x \) follows.
	
	Combining these results, we obtain
	\[
	E^{1,\tau}(\sqrt{\rho^*}) \leq \liminf_{n \to \infty} E^{1,\tau}(\sqrt{\rho_n}).
	\]
	Thus,   \( \rho^* \in \mathcal{M} \) is a minimizer of \(E^{1,\tau}\) on \(\mathcal{M}\).  The convexity of \( E^{1,\tau}(\sqrt{\rho}) \) further guarantees the uniqueness of the minimizer.
\end{proof}

\end{document}